%% file: cyclesystem_reg.tex
\documentclass[12pt]{amsart}

\usepackage{amssymb}
\usepackage{aliascnt}
\usepackage[margin=1in]{geometry}
\usepackage{comment}
\usepackage{todonotes}
\usepackage{tcolorbox}
\usepackage[colorlinks,allcolors=blue]{hyperref}
\usepackage[nameinlink,noabbrev,capitalise]{cleveref}
\usepackage{algorithm}
\usepackage{algpseudocode}
\usepackage{standalone,mathrsfs}

\usepackage{tikz}
\usetikzlibrary{calc}

\usepackage[nameinlink]{cleveref}

\newtheorem{theorem}{Theorem}[section]

\newaliascnt{lemma}{theorem}
\newtheorem{lemma}[lemma]{Lemma}
\aliascntresetthe{lemma}

\newaliascnt{corollary}{theorem}
\newtheorem{corollary}[corollary]{Corollary}
\aliascntresetthe{corollary}

\newaliascnt{proposition}{theorem}
\newtheorem{proposition}[proposition]{Proposition}
\aliascntresetthe{proposition}

\newaliascnt{conjecture}{theorem}
\newtheorem{conjecture}[conjecture]{Conjecture}
\aliascntresetthe{conjecture}

\newaliascnt{question}{theorem}
\newtheorem{question}[question]{Question}
\aliascntresetthe{question}

\theoremstyle{definition}

\newaliascnt{definition}{theorem}
\newtheorem{definition}[definition]{Definition}
\aliascntresetthe{definition}

\newaliascnt{example}{theorem}
\newtheorem{example}[example]{Example}
\aliascntresetthe{example}

\newaliascnt{remark}{theorem}
\newtheorem{remark}[remark]{Remark}
\aliascntresetthe{remark}

\crefname{theorem}{theorem}{theorems}
\Crefname{theorem}{Theorem}{Theorems}

\crefname{lemma}{lemma}{lemmas}
\Crefname{lemma}{Lemma}{Lemmas}

\crefname{corollary}{corollary}{corollaries}
\Crefname{corollary}{Corollary}{Corollaries}

\crefname{proposition}{proposition}{propositions}
\Crefname{proposition}{Proposition}{Propositions}

\crefname{definition}{definition}{definitions}
\Crefname{definition}{Definition}{Definitions}

\crefname{example}{example}{examples}
\Crefname{example}{Example}{Examples}

\crefname{conjecture}{conjecture}{conjectures}
\Crefname{conjecture}{Conjecture}{Conjectures}

\crefname{question}{question}{questions}
\Crefname{question}{Question}{Questions}

\crefname{remark}{remark}{remarks}
\Crefname{remark}{Remark}{Remarks}

\DeclareMathOperator{\cir}{\mathrm{Cir}}
\DeclareMathOperator{\Ind}{Ind}
\newcommand{\CC}{\mathcal{C}}
\newcommand{\ms}[1]{\mathscr{#1}}
\newcommand{\mc}[1]{\mathcal{#1}}
\newcommand{\CCk}[1]{\mathcal{C}^{(#1)}}

\title{Matroids with cycle systems are regular}

 \author{Anton Dochtermann}
 \address{Texas State University} 
 \email{dochtermann@txstate.edu}

 \author{Evan Huang}
 \address{Cornell University} 
 \email{eh677@cornell.edu}

 \author{Kai Mawhinney}
 \address{Harvey Mudd College} 
 \email{kmawhinney3@gmail.com}

 \author{Suho Oh}
 \address{Texas State University} 
 \email{suhooh@txstate.edu}

 \author{Yuchen Xu}
 \address{Amherst College} 
 \email{yuxu30@amherst.edu}

\keywords{Matroids, cycle systems, regular matroids, $h$-vectors, pure $O$-sequences, parking functions}

\usepackage[maxbibnames=99,backend=bibtex]{biblatex}
\begin{document}

\begin{abstract}
A cycle system for a matroid $M$ is a collection of cycles (unions of circuits) whose intersection properties mimic the cut sets of a graph. Cycle systems were introduced by Corry, the first author, McClain, Perkinson, and Yi, who showed that the $h$-vector of any matroid that admits a cycle system is a pure $O$-sequence, confirming a conjecture of Stanley for this class.  Those authors also conjectured that any matroid admitting a cycle system must be binary. Here we answer this conjecture in the affirmative, and prove the stronger result that any such matroid must in fact be regular (representable over any field). From this, we conclude that if $M$ is connected, every cycle system for $M$ is a basis for its circuit space of. We establish other properties of cycle systems along the way, which may be of independent interest.
\end{abstract}

\maketitle

\section{Introduction}

The $h$-vector $h_M = (h_0, h_1, \dots)$ of a matroid $M$ is an important invariant related to the independence complex of $M$, and can also be recovered as an evaluation of its Tutte polynomial. There has been considerable recent progress in understanding the $h$-vectors of matroids. Berget, Spink, and Tseng \cite{BST} showed that the entries form a \emph{log-concave} sequence, generalizing  an earlier result of Huh \cite{Huh}, who established the result for matroids realizable over a field of characteristic zero. An open question of Stanley \cite{Stanley} posits that the $h$-vector of a matroid forms a pure \emph{$O$-sequence}, meaning that there exists an order ideal $\Gamma$ of monomials containing $h_i$ monomials of degree $i$, and such that all maximal elements of $\Gamma$ have the same degree.

As a means to attack Stanley's conjecture, the authors of \cite{CycleSystems} introduced the notion of a \emph{cycle system} of a matroid. The definition depends on the \emph{unique union} $*\{C_1, \dots, C_k\}$ of a collection $C_1, \dots, C_k$ of subsets of the ground set of the matroid, by definition the set of elements in $\bigcup C_i$ that appear in exactly one of the $C_i$ (see \Cref{def:uniqueunion} for details). 
A cycle system on a corank $g$ matroid $M$ is then a collection ${\mathcal C} = \{C_1, \dots, C_g\}$ of cycles (unions of circuits) with the property that the unique union of any nonempty subset of ${\mathcal C}$ is dependent.

In \cite{CycleSystems} the authors show that if ${\mathcal C}$ is a cycle system for a matroid $M$, then there exists a collection ${\mathcal P}^{*}({\mathcal C})$ of \emph{coparking functions}, sequences of nonnegative integers with the property that the number of coparking functions of degree $i$ is given by the $h$-vector entry $h_i$. Furthermore, the \emph{maximal} coparking functions all have the same degree. This establishes Stanley's conjecture for the class of matroids that admit cycle systems, which we will denote by $\ms{C}$.

These constructions extend results of Merino \cite{MerinoTutte}, who established Stanley's conjecture for cographic matroids via the theory of chip-firing and $G$-parking functions. Indeed, if $G$ is a graph with sink vertex $q$ and nonsink vertices $\{1,2,\dots, n\}$, one can show that the collection of cut sets determined by each nonsink vertex $i$ (the set of edges incident to $i$) forms a cycle system for the cographic matroid $M(G)^*$. In addition, the coparking functions in this case recover the $G$-parking functions of the graph $G$. Thus, $\ms{C}$ contains all cographic matroids.

As shown in \cite{CycleSystems}, not all matroids admit cycle systems, including the graphic matroid $M(K_{3,3})$.  In the positive direction, $\ms{C}$ contains all graphic matroids of planar graphs and of \emph{coned} graphs. In addition, cycle systems behave well with respect to various matroid operations including contractions and (certain) deletions, as well as $2$-sums (see below for details). In \cite{CycleSystems} the authors use these facts to show that cycle systems exist for graphic matroids of all $K_{3,3}$-minor-free graphs. 

If $M$ is a connected matroid, it is shown in \cite{CycleSystems} that any cycle system ${\mathcal C}$ on $M$ consists of circuits, and furthermore the elements of ${\mathcal C}$ form a linearly independent collection in the \emph{circuit space} of $M$. In particular, if $M$ is a binary matroid, this implies that ${\mathcal C}$ forms a basis for the circuit space.  
Based on these observations and other calculations, the authors of \cite{CycleSystems} conjecture that any matroid admitting a cycle system must be binary. Our first result is a positive solution to this conjecture. 

\newtheorem*{thm:cycle system binary}{Theorem \ref{thm:cycle system binary}}
\begin{thm:cycle system binary}
All matroids with cycle systems are binary.
\end{thm:cycle system binary}

Our proof utilizes the excluded-minor characterization of binary matroids, as well as new methods for understanding how cycle systems behave with respect to various reduction operations on matroids. After establishing some further properties of cycle systems that follow from \Cref{thm:cycle system binary}, our methods can be pushed further to establish a stronger result.

\newtheorem*{thm:cycle system regular}{Theorem \ref{thm:cycle system regular}}
\begin{thm:cycle system regular}
All matroids with cycle systems are regular.
\end{thm:cycle system regular}

Letting $\ms{B}$ and $\ms{R}$ denote the classes of binary and regular matroids respectively, we can restate \Cref{thm:cycle system regular} as $\ms{C}\subset \ms{R}$ (and recall that $\ms{R}\subset \ms{B}$). We provide the proof of \Cref{thm:cycle system binary} because it is used in the proof of \Cref{thm:cycle system regular}.

In our proofs of \Cref{thm:cycle system binary} and \Cref{thm:cycle system regular} we need a way to rule out the existence of a cycle system for a particular class of matroids. Our main technical tool is \Cref{thm:parallel cancellation}, which allows us to adjoin an additional cycle $D$ to the unique union of some subset of cycle system ${\mathcal C}$ while preserving dependence. Choosing $D$ so that it overlaps the cycles of ${\mathcal C}$ heavily forces a small set to be dependent, and the structure of the underlying matroid then yields the desired contradiction.

Having established that only regular matroids can admit cycle systems, we next investigate how this knowledge can lead to further structural properties. 
For instance, \Cref{thm:cycle system binary} implies that any cycle system ${\mathcal C}$ on a connected matroid $M$ forms a basis for its \emph{circuit space}, which is the ${\mathbb F}_2$-vector space spanned by the indicator vectors of all circuits.  It follows that if $D$ is any circuit of $M$, there is a unique way to express $D$ as a \emph{symmetric difference} of some subset of ${\mathcal C}$.  In \Cref{cor:uniquerep} we prove that this expression is in fact a unique union, meaning that each element of $D$ appears in exactly one element of the subset. We conjecture that no other subset of ${\mathcal C}$ has unique union equal to $D$.

\newtheorem*{conj:unique}{Conjecture \ref{conj:unique}}
\begin{conj:unique}
Suppose $M$ is a matroid with circuit system ${\mathcal C}$, and let $D$ be a circuit of $M$. Then there exists a unique $S\subseteq [g]$ such that $D = *\{C_i: i \in S\}$. 
\end{conj:unique}

In the final part of the paper, we make progress towards \Cref{conj:unique}. In particular, we provide the following sufficient condition, which may be more tractable. In the following, we use $\mc C_{\ge 3}$ to denote the elements of $E(M)$ that appear in at least 3 distinct cycles of $\mc C$.

\newtheorem*{conj:C>=3 ind}{Conjecture \ref{conj:C>=3 ind}}
\begin{conj:C>=3 ind}
    Let $M$ be a matroid with circuit system $\mathcal{C}$, where $\mathcal{C}_{[g]}$ is a circuit. Then $\mathcal{C}_{\ge 3}$ is independent.
\end{conj:C>=3 ind}

We prove that the second conjecture implies the first.

\newtheorem*{thm:sufficient unique}{Theorem \ref{thm:sufficient unique}}
\begin{thm:sufficient unique}
    \Cref{conj:C>=3 ind} implies \Cref{conj:unique}.
\end{thm:sufficient unique}

\subsection{Organization}
The rest of the paper is organized as follows. In \Cref{sec:prelim} we provide background material, including a review of basic matroid theory, $h$-vectors, and classes of matroids needed for our study. Here we also review the notions of unique unions and cycle systems for matroids. In \Cref{sec:tools} we collect some technical tools that we will need in the proofs of our main theorems.  In \Cref{sec:binary} we focus on the binary case and use the minor characterization and our reduction tools to prove \Cref{thm:cycle system binary}, as well as derive some corollaries. In \Cref{sec:regular} we prove our main result \Cref{thm:cycle system regular}. In \Cref{sec:uu and symdiff} we address the uniqueness question of representing circuits in terms of elements of a cycle system, and prove \Cref{prop:C>=3}. Finally, in \Cref{sec:Further} we discuss some open questions and possible avenues for future research.

\section{Preliminaries}\label{sec:prelim}

\subsection{Basic matroid theory}

We begin with some background material regarding matroids. Although we provide most definitions, we will assume that the reader is familiar with the basics of matroid theory. We mostly follow the treatment in \cite{Oxley}.

\begin{definition}[Matroid]
A \emph{matroid} $M = (E,{\mathcal I})$ on a finite \emph{ground set} $E=E(M)$ is a collection $\mathcal{I}=\mathcal{I}(M)$ of subsets of $E$ satisfying the following properties:
\begin{enumerate}
    \item $\emptyset\in\mathcal{I}$;
    \item If $X \in {\mathcal I}$ and $Y \subseteq X$ then $Y \in {\mathcal I}$;
    \item If $X, Y \in {\mathcal I}$ and $|X| > |Y|$ then there exists $e \in X \setminus Y$ such that $Y \cup \{e\} \in {\mathcal I}$. (Exchange property)
\end{enumerate}
\end{definition}

The elements of ${\mathcal I}$ are called the \emph{independent sets} of $M$, denoted $\mathcal{I}(M)$.
Two matroids are \emph{isomorphic} if there is a bijection of their ground sets
inducing a bijection of independent sets.

An independent set that is maximal (under inclusion) is called a \emph{basis}. The number of elements in any (and hence every) basis of $M$ is called the \emph{rank} of the matroid, denoted $r(M)$. More generally, for $S \subseteq E$, the \emph{rank} of $S$, denoted $r(S)$, is the maximum cardinality of an independent subset of $S$. The \emph{corank} of a matroid $M$ is given by $g = |E| - r(M)$. A subset $F \subseteq E$ is a \emph{flat} if adding any element of $E \setminus F$ increases rank, so that $r(F \cup \{x\}) = r(F) + 1$ for any $x \in E \setminus F$. Finally, a \emph{hyperplane} is defined as a maximal proper flat, and a \emph{cocircuit} is defined as the complement of a hyperplane.

A subset of $E$ is \emph{dependent} if it is not independent, and a minimal dependent subset is called a \emph{circuit}. Note that a set is dependent if and only if it contains a circuit.  The collection of circuits of $M$, denoted $\cir(M)$, determines the independent sets of $M$, and in fact a matroid can equivalently be defined by the following properties:
\begin{enumerate}
    \item $\emptyset\notin\cir(M)$;
    \item If $C,C'\in\cir(M)$ and $C\subseteq C'$, then $C=C'$;
    \item\label{item:circuit3} (Circuit elimination property) If $C,C'\in\cir(M)$ with $C\neq C'$ and $e\in C\cap C'$, then there exists $D\in\cir(M)$ such that $D\subseteq (C\cup C')\setminus \{e\}$.
\end{enumerate}

A \emph{loop} of a matroid $M$ is an element $e\in E$ that is contained in no basis. A \emph{coloop} (or sometimes \emph{bridge}) is an element $e\in E$ contained in every basis. An element is a coloop precisely when it is not contained in any circuit. A \emph{cycle} is a subset of \(E\) that can be expressed as the union of circuits. 

For a matroid $M = (E, {\mathcal I})$, the \emph{dual matroid} $M^*$ has ground set $E$ and bases given by complements of the bases of $M$, so that ${\mathcal B}(M^*) = \{E \setminus B:B \in {\mathcal B}(M)\}$. With this language one can check that a cocircuit of $M$ is precisely a circuit in the dual matroid $M^*$. The following property relating circuits and cocircuits of a matroid will be useful for our study.

\begin{lemma}[{\cite[Proposition 2.1.11]{Oxley}}]\label{lem:intersection}
Suppose $M$ is a matroid, and let $C$ be a circuit of $M$ and $D$ a cocircuit of $M$. Then $|C \cap D| \neq 1$.
\end{lemma}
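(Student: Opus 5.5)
The plan is to argue by contradiction. Suppose $C$ is a circuit and $D$ a cocircuit with $C \cap D = \{e\}$. Write $H = E \setminus D$, which is a hyperplane by definition of a cocircuit, so $H$ is a flat of rank $r(M)-1$. The single element $e$ lies in $C$ but not in $H$, and every other element of $C$ lies in $H$; that is, $C \setminus \{e\} \subseteq H$.

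The key step combines minimality of $C$ with the flat property of $H$. Since $C$ is a circuit, $C \setminus \{e\}$ is independent while $C$ itself is dependent. It follows that $r(C) = r(C\setminus\{e\}) = |C|-1$, so $e$ lies in the closure of $C\setminus\{e\}$. Closure is monotone and $H$ is closed, so $e \in \mathrm{cl}(C\setminus\{e\}) \subseteq \mathrm{cl}(H) = H$. This contradicts $e \in D = E\setminus H$.

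To keep to the definitions as the paper states them, in terms of rank and flats, I would phrase the closure step as a rank computation. Let $I = C\setminus\{e\}$ and extend it to a maximal independent subset $B_H$ of $H$, so $|B_H| = r(H)$. Since $e \notin H$ and $H$ is a flat, $r(H\cup\{e\}) = r(H)+1$. This means $B_H \cup \{e\}$ must be independent: otherwise the exchange property applied to a larger independent set inside $H\cup\{e\}$ would produce an independent subset of $H$ larger than $B_H$. But $B_H\cup\{e\} \supseteq C$, which is dependent. This is the contradiction.

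I expect the main (mild) obstacle to be justifying rigorously that $r(H\cup\{e\}) = r(H)+1$ forces $B_H\cup\{e\}$ to be independent. The argument is to take an independent set $J \subseteq H\cup\{e\}$ with $|J| = r(H)+1$ and use the exchange property to augment $B_H$ from $J$. The added element cannot come from $H$, by maximality of $B_H$ in $H$, so it must be $e$. Everything else is immediate from the definitions.
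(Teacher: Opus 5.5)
Your proof is correct and takes essentially the same approach as the cited result; the paper gives no proof of its own and defers to Oxley, whose argument is the closure argument in your first paragraph: $E\setminus D$ is a hyperplane, hence closed, and $C\setminus\{e\}\subseteq E\setminus D$ forces $e\in \mathrm{cl}(C\setminus\{e\})\subseteq E\setminus D$, a contradiction. Your rank-and-exchange paragraph just rewrites that closure step using the paper's definitions of rank and flat, and it is sound; there you only need $|B_H|\le r(H)<|J|$, not the equality $|B_H|=r(H)$.
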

Given an undirected graph $G$ on vertex set $V$ and edge set $E$ (allowing loops and parallel edges), one can form the \emph{graphic matroid} $M(G)$ whose ground set is $E$ and whose independent sets are given by the collections of edges that form a forest in $G$. More generally, a matroid $M$ is \emph{graphic} if it is isomorphic to the graphic matroid of some graph. The dual matroids of graphic matroids are called \emph{cographic} matroids. 

Many notions from graph theory have matroidal analogues. For instance, the operation of deleting or contracting an edge in a graph to obtain a new graph leads to the definition of matroid minors.

\begin{definition}[Minors]
Suppose $M$ is a matroid on ground set $E$ and let $S\subseteq E$. The \emph{deletion} $M\setminus S$ is the matroid on ground set $E \setminus S$, with independent sets $\{I \subseteq E \setminus S: I \in {\mathcal I}\}$.  The \emph{contraction} is defined as $M/S = (M^*\setminus S)^*$, again on ground set $E\setminus S$. A matroid $N$ is a \emph{minor} of the matroid $M$ if it can be constructed from $M$ by a sequence of deletions and contractions.
\end{definition}

The notion of parallel edges of a graph also has a matroidal interpretation.

\begin{definition}[Parallel elements]
    Suppose $M$ is a matroid. Elements $a,b\in E(M)$ are said to be \emph{parallel} if $\{a,b\}\in \cir(M)$. The relation $a\sim b\iff\{a,b\}\in \cir(M)$ or $a=b$ is an equivalence relation, and the equivalence classes are called \emph{parallel classes}. 
\end{definition}

It can be verified that if $a$ and $b$ are parallel, and if $I$ is an independent set containing $a$, then $(I \setminus \{a\}) \cup \{b\}$ is also independent. Similarly, if $C$ is dependent with $a \in C$ and $b\notin C$, then $(C \setminus \{a\}) \cup \{b\}$ is also dependent.

Given a graph with multiple loops and parallel edges, many of the underlying matroidal properties can be read off from the simple graph obtained by deleting the loops and all but one element from each parallel class. This idea is generalized to matroids using the following notion of simplification.

\begin{definition}[Simplification]\label{def:simplification}
    Given a matroid $M$, the \emph{simplification} $\widetilde{M}$ of $M$ is obtained by deleting all loops and all but one representative from each parallel class. 
\end{definition} 

\begin{remark}\label{rem:simplification}
    The simplification of $M$ is well-defined up to isomorphism regardless of which specific representatives are chosen. 
\end{remark}    

\subsection{$h$-vectors of matroids}

The $h$-vector of a matroid can be defined in a number of ways.  For one approach, note that the collection of independent sets of a rank $d$ matroid $M$ forms a $(d-1)$-dimensional simplicial complex $\Ind(M)$ called the \emph{independence complex} of $M$.  The \emph{f-vector} $\vec{f} = (f_{-1}, f_0, \dots, f_{d-1})$ of $\Ind(M)$ counts the number of simplices of $\Ind(M)$ of each dimension, so that $f_{i-1}$ is the number of independent sets of $M$ of size $i$. The \emph{$h$-vector of $M$} is then by definition the $h$-vector of this simplicial complex, which is often a more convenient way to encode the data of $\vec{f}$.  In particular, one can define the entries of $\vec{h} = (h_0, \dots, h_d)$ according to the linear relation
\[\sum_{i=0}^d f_{i-1}(t-1)^{d-i} = \sum_{k=0}^d h_kt^{d-k}.\]
\noindent

One can also recover the $h$-vector of $M$ as an evaluation of its Tutte polynomial. For this, recall that if $M$ is a matroid on ground set $E$, the \emph{Tutte polynomial} of $M$ is the bivariate polynomial 
\[T_M(x,y) = \sum_{S \subseteq E} (x-1)^{r(M)-r(S)}(y-1)^{|S|-r(S)}.\]
If we evaluate this expression at the value $y =1$, we get
\[T_M(x,1) = \sum_{I \in {\mathcal I}} (x-1)^{r(M)-|I|} = \sum_{i=0}^d f_{i-1}(x-1)^{d-i} = \sum_{k=0}^d h_kx^{d-k}.\]
In particular we see that $T_M(x,1)$ is a polynomial in $x$ whose coefficients are the entries of the $h$-vector (in reverse order).
Also, if $M^*$ is the matroid dual to $M$, we have $T_{M^*}(x,y) = T_M(y,x)$. From this we see that $T_M(1,y)$ is a polynomial that recovers the $h$-vector of the dual $M^*$.

The $h$-vector of a matroid $M$ can also be defined in terms of \emph{activity} of the bases of $M$. In particular one can show that $h_i$ is the number of bases of $M$ that have $i$ \emph{internally passive} elements, under any ordering of the ground set. We omit the details here, since we will not need this approach in our work.

\subsection{Classes of matroids and characterizations}
\label{sec:classes}
We next review some classes of matroids that will be important for our study.  A \emph{representation} of a matroid $M$ is a family ${\mathcal F}$  of vectors  in a vector space $V$ with the property that the independent sets of $M$ correspond to the subsets of ${\mathcal F}$ that are linearly independent. A matroid is \emph{linear} if it has a representation, and $M$ is said to be \emph{$F$-linear} (for some field $F$) if it has a representation in a vector space over $F$. 

A matroid $M$ is \emph{binary} if it can be represented over the field of two elements, which we denote ${\mathbb F}_2$. A matroid is \emph{regular} if it can be represented over all fields. One can show that both graphic and cographic matroids are regular. There are many different characterizations of binary and regular matroids; for us, the following will be most useful. Recall that the \emph{uniform matroid $U_{k,n}$} is by definition the matroid on ground set $E = [n]$ with bases given by all $k$-subsets. 

\begin{proposition}[{\cite{Tutte1}}]\label{prop:binary u24}
    A matroid is binary if and only if it does not contain the uniform matroid $U_{2,4}$ as a minor.
\end{proposition}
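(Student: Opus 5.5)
We argue the two directions separately. The forward direction is short. The class of $\mathbb{F}_2$-representable matroids is closed under deletion and contraction. Deletion corresponds to dropping vectors. Contraction of $e$ corresponds to projecting along $e$: quotient by the span of the vector for $e$, or simply delete $e$ if $e$ is a loop. Hence every minor of a binary matroid is binary. Now suppose $U_{2,4}$ were binary. Its four elements would be pairwise independent, so they would give four distinct nonzero vectors in a $2$-dimensional space $\mathbb{F}_2^2$. But $\mathbb{F}_2^2$ has only three nonzero vectors, a contradiction. So a binary matroid has no $U_{2,4}$ minor.

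For the converse we go through an intermediate characterization. Let $M$ have no $U_{2,4}$ minor. Fix a basis $B$ and let $A$ be the $B\times(E\setminus B)$ matrix over $\mathbb{F}_2$ with $A_{b,e}=1$ iff $b$ lies in the fundamental circuit $C(e,B)$. Let $N$ be the binary matroid represented by $[I_B\mid A]$. Then $N$ and $M$ share the basis $B$ and all fundamental circuits with respect to $B$. The plan is to show:
\begin{enumerate}
\item[(a)] if every circuit and every cocircuit of $M$ meet in an even number of elements, then $M=N$;
\item[(b)] if some circuit $C$ and cocircuit $D$ of $M$ satisfy $|C\cap D|$ odd, then $M$ has a $U_{2,4}$ minor.
\end{enumerate}

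For (a), every circuit of $M$ is determined by its intersection with $E\setminus B$. Indeed, each $b\in B$ determines a fundamental cocircuit $D_b=C^*(b,B)$ of $M$. The even-intersection condition forces $b\in C$ exactly when $|C\cap D_b\cap(E\setminus B)|$ is odd, i.e.\ exactly when $b$ appears in the mod-$2$ sum of the fundamental circuits indexed by $C\setminus B$. Hence every circuit of $M$ equals the symmetric difference of the fundamental circuits $C(e,B)$, $e\in C\setminus B$. That symmetric difference is a cycle of $N$, since it is the support of a linear dependency in the binary representation. Symmetrically, every circuit of $N$ satisfies the parity condition for $N$, which holds automatically in binary matroids. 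Running the same argument with the roles of $M$ and $N$ swapped, the dependent sets of the two matroids coincide, so $M=N$.

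The main obstacle is (b). The plan is a minimal-counterexample reduction. Among all pairs $(C,D)$ with $|C\cap D|$ odd (necessarily $\ge 3$, by \Cref{lem:intersection}), choose $M$ and the pair so that $|E|$ is minimal. Elements outside $C\cup D$ can be deleted or contracted without destroying the property: delete if the element is outside $D$, contract if outside $C$. One checks that $C$ remains a circuit and $D$ a cocircuit in the resulting minor, using that circuits of $M\setminus e$ are circuits of $M$ avoiding $e$, and dually. Similarly, elements of $C\setminus D$ can be contracted and elements of $D\setminus C$ deleted, after replacing $C$ and $D$ by a suitable circuit and cocircuit that still meet oddly. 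This reduces to the case $E=C=D$. Here $E$ is a circuit, so $M\cong U_{n-1,n}$; $E$ is also a cocircuit, so $M\cong U_{1,n}$. These are compatible only when $n=2$, which has even intersection, so the reduction must be handled more carefully. What we actually aim for is $E = C\cup D$ with $|C\cap D|=3$ and a small number of additional elements. From there a direct analysis of the rank function identifies a $U_{2,4}$ minor, by contracting all but two elements of a suitable independent set. The delicate bookkeeping is guaranteeing that after each contraction or deletion some circuit and cocircuit still meet in an odd number of elements. I would handle it following the standard argument in \cite[Section~6.5]{Oxley}: use circuit elimination to trade $C$ for a new circuit inside $(C\cup C')\setminus e$, and track the parity of its intersection with $D$.
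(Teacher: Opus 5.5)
The paper gives no proof of this statement; it simply quotes Tutte's theorem, so your argument has to stand on its own. The forward direction is fine. Part (a) is the standard proof of the hard direction of the parity characterization (\Cref{prop:cocircuit}), with one unjustified step. ``Running the same argument with the roles swapped'' is not symmetric: you showed $\triangle_{e\in C\setminus B}C(e,B)$ is dependent in $N$ via a linear relation, and that has no analogue in $M$. Parity repairs it. Each $C(e,B)$ meets every cocircuit of $M$ evenly, hence so does their symmetric difference. By contrast, a nonempty independent set $I$ of $M$ meets the fundamental cocircuit $C^*(x,B')$ in exactly $\{x\}$, where $B'\supseteq I$ is a basis and $x\in I$.

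The genuine gap is (b), which is the entire content of Tutte's theorem and which you end up deferring to Oxley. Moreover, the reduction you do state is false. You claim an element $e\notin C\cup D$ can be deleted or contracted while keeping $C$ a circuit and $D$ a cocircuit. But deleting $e\notin C$ keeps $C$ and can destroy $D$ (when $e\in\mathrm{cl}^*(D)$), while contracting $e\notin D$ keeps $D$ and can destroy $C$ (when $e\in\mathrm{cl}(C)$). Both happen in $U_{2,4}$ with $C=D=\{1,2,3\}$ and $e=4$: here $U_{2,4}\setminus 4\cong U_{2,3}$ and $U_{2,4}/4\cong U_{1,3}$. Since every proper minor of $U_{2,4}$ is binary, no rule of this kind can work, and a minimal counterexample always has elements outside $C\cup D$. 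That configuration is exactly what needs an argument.

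Here is one way to supply it. Your contraction of $C\setminus D$ and deletion of $D\setminus C$ is correct as it stands, with no replacement needed. It leaves $X=C\cap D$ both a circuit and a cocircuit with $|X|$ odd. So $Y=E\setminus X$ is a hyperplane, nonempty by your $U_{n-1,n}$ versus $U_{1,n}$ remark. Take $y\in Y$.
\begin{itemize}
\item If $y$ is a coloop of $M$, or not a coloop of $M|Y$, then $X$ is still a circuit and a cocircuit of $M\setminus y$.
\item Otherwise $F=Y\setminus y$ is a flat of rank $r(M)-2$. Then $M/F$ is a loopless rank-$2$ matroid on $X\cup y$ in which $y$ is parallel to nothing, and $X$ meets at least two of its parallel classes, since $y$ is not a coloop of $M$.
\item If $X$ meets at least three parallel classes, you have a $U_{2,4}$-minor.
\item If $X$ meets exactly two, $Q_1$ and $Q_2$, these are cocircuits of $M\setminus y$ partitioning $X$. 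So one of them meets the circuit $X$ in an odd number of elements.
\end{itemize}
Induction on $|E|$ then proves (b).
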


Binary matroids enjoy a stronger version of circuit elimination, sometimes referred to as \emph{double circuit elimination}.

\begin{proposition}[{\cite[Corollary 9.1.6]{Oxley}}]\label{prop:double elimination}
    A matroid \(M\) is binary if and only if whenever \(C_1\) and \(C_2\) are distinct circuits of \(M\) with $\{e,f\} \subseteq (C_1\cap C_2)$, there is a circuit contained in \((C_1\cup C_2) - \{e,f\}\).
\end{proposition}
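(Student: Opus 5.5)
The plan is to prove the two directions separately: linear algebra over ${\mathbb F}_2$ for the forward direction, and the excluded-minor characterization of \Cref{prop:binary u24} for the converse.

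\emph{Binary implies double elimination.} Fix an ${\mathbb F}_2$-representation $\{v_x : x\in E\}$ of $M$. The key observation is that for any circuit $C$, $\sum_{x\in C} v_x = 0$. Indeed, $C$ is dependent, so some nonempty subset of $\{v_x : x \in C\}$ sums to zero (over ${\mathbb F}_2$ every nontrivial linear relation has all coefficients equal to $1$). By minimality of $C$, that subset must be all of $C$. Now let $C_1\neq C_2$ be circuits with $\{e,f\}\subseteq C_1\cap C_2$. Adding the two relations gives
\[\sum_{x\in C_1\triangle C_2} v_x = 0.\]
Since $C_1\neq C_2$ are both circuits, neither contains the other, so $C_1\triangle C_2\neq\emptyset$. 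Hence $C_1\triangle C_2$ is a dependent set, and it contains a circuit. Finally, $e,f\in C_1\cap C_2$, so $C_1\triangle C_2\subseteq (C_1\cup C_2)\setminus\{e,f\}$, which proves this direction.

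\emph{Double elimination implies binary.} I would argue the contrapositive. Suppose $M$ is not binary. By \Cref{prop:binary u24}, $M$ has a minor $M/X\setminus Y\cong U_{2,4}$ on $\{1,2,3,4\}$. By the standard normalization of minors, we may take $X$ independent and $Y$ coindependent in $M$. In $U_{2,4}$ the sets $\{1,2,3\}$ and $\{1,2,4\}$ are circuits. Circuits of $M/X$ are the minimal nonempty sets $C\setminus X$ with $C\in\cir(M)$, and circuits of a deletion are circuits avoiding $Y$. So there are circuits $C_1,C_2$ of $M$ with
\[C_1\subseteq X\cup\{1,2,3\},\quad C_1\setminus X=\{1,2,3\},\qquad C_2\subseteq X\cup\{1,2,4\},\quad C_2\setminus X=\{1,2,4\}.\]
In particular $C_1\neq C_2$ and $\{1,2\}\subseteq C_1\cap C_2$.

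Any circuit $D\subseteq (C_1\cup C_2)\setminus\{1,2\}$ would lie inside $X\cup\{3,4\}$. However, $\{3,4\}$ is independent in $U_{2,4}$, hence in $M/X$. Since $X$ is independent in $M$, this means $X\cup\{3,4\}$ is independent in $M$, so no such $D$ exists. Thus $M$ violates double elimination.

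The main point needing care is this lifting step: that circuits of the $U_{2,4}$-minor really come from circuits of $M$ confined to $X$ together with the relevant minor elements. This is where I would invoke the standard facts that $X$ may be chosen independent and that circuits of a contraction by an independent set are exactly the nonempty sets $C\setminus X$ that are minimal among such. Everything else is routine.
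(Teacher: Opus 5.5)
Your proof is correct and complete, given \Cref{prop:binary u24} and the standard facts about minors that you invoke. The paper does not prove this statement; it quotes it from Oxley as background, so there is no in-paper argument to compare against.

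Your forward direction is the direct linear-algebra argument. Over $\mathbb{F}_2$ every circuit $C$ yields the relation $\sum_{x\in C}v_x=0$, so $C_1\triangle C_2$ is nonempty and dependent. This proves the stronger classical fact that $C_1\triangle C_2$ itself contains a circuit. That is the same closure-under-symmetric-difference property behind \Cref{cor:circuit basis}.

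Your converse is a clean lifting argument:
\begin{enumerate}
\item You write the $U_{2,4}$ minor as $M/X\setminus Y$ with $X$ independent.
\item The circuits $123$ and $124$ of the minor lift to distinct circuits $C_1,C_2$ of $M$ that both contain $1$ and $2$.
\item The set $(C_1\cup C_2)\setminus\{1,2\}$ lies in $X\cup\{3,4\}$.
\item That set is independent in $M$, because $\{3,4\}$ is independent in $M/X$ and $X$ is independent.
\end{enumerate}
Independence of $X$ is exactly what step 4 needs. Without it, $\{3,4\}$ being independent in $M/X$ would only give $r(X\cup\{3,4\})=r(X)+2$, and the argument would break. You correctly identified this and supplied it through the normalization of minors.

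The trade-off is that your converse relies on Tutte's excluded-minor theorem. That is harmless here, since the paper takes \Cref{prop:binary u24} as given. It would be circular, however, in any development that uses double elimination on the way to proving Tutte's theorem. Avoiding it would mean building an $\mathbb{F}_2$-representation, or checking the parity condition of \Cref{prop:cocircuit} directly from double elimination, which takes more work.
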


Binary matroids also have the following characterization that involves intersecting circuits and cocircuits.

\begin{proposition}[{\cite[Theorem 9.1.2]{Oxley}}]\label{prop:cocircuit}
    A matroid \(M\) is binary if and only if the size of the intersection of any circuit and cocircuit of \(M\) is even.
\end{proposition}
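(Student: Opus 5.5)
We prove the two directions separately. The forward direction uses an explicit $\mathbb{F}_2$-representation. The reverse direction uses the excluded-minor characterization in \Cref{prop:binary u24}, together with the fact that the parity condition is preserved under taking minors.

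\emph{($\Rightarrow$)} Suppose $M$ is represented over $\mathbb{F}_2$ by vectors $\{v_e\}_{e\in E}$ spanning a space $W$. Let $D$ be a cocircuit, so $H=E\setminus D$ is a hyperplane.
\begin{enumerate}
\item Since $H$ is a flat of rank $r(M)-1$, its span $\langle v_h : h\in H\rangle$ is a codimension-one subspace of $W$.
\item Because $H$ is a flat, no $v_e$ with $e\in D$ lies in this span.
\item Hence there is a linear functional $\varphi\colon W\to\mathbb{F}_2$ with $\varphi(v_h)=0$ for $h\in H$ and $\varphi(v_e)=1$ for $e\in D$.
\item If $C$ is a circuit, minimality of the dependence forces every coefficient in the dependence relation to be nonzero. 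Over $\mathbb{F}_2$ this means $\sum_{e\in C} v_e=0$.
\item Applying $\varphi$ gives $|C\cap D| \equiv 0 \pmod 2$.
\end{enumerate}

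\emph{($\Leftarrow$)} Call $M$ \emph{even} if every circuit meets every cocircuit in an even number of elements. The plan has three steps.
\begin{enumerate}
\item \textbf{Evenness is self-dual.} Circuits of $M^*$ are the cocircuits of $M$ and vice versa, so $M$ is even if and only if $M^*$ is.
\item \textbf{Evenness is closed under deletion.} Use two standard descriptions of $M\setminus e$:
\begin{itemize}
\item its circuits are exactly the circuits of $M$ avoiding $e$;
\item its cocircuits are the minimal nonempty sets of the form $D\setminus e$ with $D$ a cocircuit of $M$.
\end{itemize}
Take a circuit $C$ of $M\setminus e$ and a cocircuit $D\setminus e$. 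Since $e\notin C$, we get $|C\cap(D\setminus e)|=|C\cap D|$, which is even.
\item \textbf{Evenness is closed under contraction.} Since $M/e=(M^*\setminus e)^*$, this follows from the first two steps. Hence every minor of an even matroid is even.
\end{enumerate}

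Finally, $U_{2,4}$ is not even. Its circuits are the $3$-subsets of $[4]$. Its hyperplanes are the singletons, so its cocircuits are also the $3$-subsets. Taking $C=D=\{1,2,3\}$ gives $|C\cap D|=3$, which is odd. Therefore an even matroid has no $U_{2,4}$ minor, and so it is binary by \Cref{prop:binary u24}.

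The main point needing care is the description of the cocircuits of a deletion used in the second step. I would justify it by duality: $(M\setminus e)^* = M^*/e$, and the circuits of a contraction $N/e$ are the minimal nonempty sets $D\setminus e$ with $D$ a circuit of $N$. The other steps are routine.
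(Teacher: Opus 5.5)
Your proof is correct and complete. The paper gives no proof of this proposition; it simply quotes \cite[Theorem 9.1.2]{Oxley}. Your argument follows the standard route: for the forward direction, a functional on the span of an $\mathbb{F}_2$-representation that vanishes exactly on the hyperplane; for the reverse direction, Tutte's excluded-minor theorem (\Cref{prop:binary u24}), which the paper also treats as a black box.

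The one difference from the usual textbook presentation is the direction of the minor argument. You push the parity condition down to minors, using self-duality and closure under deletion. The common alternative lifts an odd pair up instead. For a $U_{2,4}$ minor $M/X\setminus Y$, a $3$-element circuit $C'$ extends to a circuit $C$ of $M$ with $C'\subseteq C\subseteq C'\cup X$. A $3$-element cocircuit $D'$ extends to a cocircuit $D$ with $D'\subseteq D\subseteq D'\cup Y$. These satisfy $C\cap D=C'\cap D'$, which has size $3$. Both arguments rest on the same description of circuits and cocircuits of minors, so they are equivalent.
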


The class of regular matroids also has an excluded-minor characterization.

\begin{proposition}[{\cite{Tutte1}}]\label{prop:regular minors}
A matroid $M$ is regular if and only if it does not contain $U_{2,4}$, $F_7$, or $F_7^*$ as a minor.
\end{proposition}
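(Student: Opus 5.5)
This proposition is Tutte's excluded-minor theorem, and the paper only cites it. Still, here is the plan I would follow to prove it. The strategy is the standard reduction of regularity to a signing problem for a binary matrix, in the spirit of Gerards' short proof.

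\emph{Necessity.} The class of regular matroids is closed under minors, since minors of $F$-representable matroids are $F$-representable. It is also closed under duality: if $[I\mid A]$ represents $M$, then $[-A^{T}\mid I]$ represents $M^*$. Now $U_{2,4}$ is not binary. $F_7$ is representable only over fields of characteristic $2$: writing its seven points as the nonzero vectors of ${\mathbb F}_2^3$, the three collinearities through a common structure force $1+1=0$. So $F_7$, and by duality $F_7^*$, are not regular. Hence a regular matroid has none of the three as a minor.

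\emph{Sufficiency, reduction to signing.} Suppose $M$ has none of the three minors. By \Cref{prop:binary u24}, $M$ is binary, so it is represented over ${\mathbb F}_2$ by a matrix $[I\mid A]$ with $A$ a $0/1$ matrix. A binary representation is unique up to row operations. I then invoke the classical criterion that a binary matroid is regular if and only if some real signing $A'$ of $A$ (replacing some $1$'s by $-1$'s) makes $[I\mid A']$ totally unimodular. A totally unimodular matrix represents the same matroid over every field, which gives the "if" direction needed here. Camion's lemma gives an effective test: such a signing exists if and only if one can sign $A$ so that, for every square submatrix whose bipartite support graph is a chordless cycle, the entry sum is $\equiv 0 \pmod 4$. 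Such a signing is always achievable along a spanning forest of the bipartite graph of $A$ and then forced on the remaining entries. So $M$ fails to be regular exactly when some forced signing violates this cycle condition.

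\emph{Minimal counterexample.} Take $A$ minimal, under taking submatrices and binary pivots, among $0/1$ matrices that admit no such signing. Submatrices and pivots correspond to minors of $M$ and changes of basis. Minimality lets me assume that every proper submatrix is signable and that $A$ is connected. Using pivots, I then show that $A$ can be brought to one of two shapes:
\begin{itemize}
\item the $3\times 3$ matrix $J-I$ (a chordless $6$-cycle with a bad sign pattern), or
\item that matrix bordered by an all-ones row or column.
\end{itemize}
The first gives $[I\mid A]$ representing $F_7^*$ (or $F_7$), and the second gives the dual. Any other shape admits a pivot producing a smaller non-signable matrix, which contradicts minimality. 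I expect this case analysis to be the main obstacle. Gerards handles it by pivoting on an entry of a minimal "bad cycle" and tracking how the cycle shortens. I would follow that argument, using the no-$U_{2,4}$ hypothesis to keep every pivot within binary matrices.
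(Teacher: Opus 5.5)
The paper gives no proof of this proposition. It quotes Tutte's theorem and uses it as a black box. Your necessity direction is fine. In the sufficiency direction, though, you do more than defer the hard case analysis to Gerards: the two ingredients you state explicitly are both false, and they break the reduction.

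First, the hole test you attribute to Camion is not a criterion for TU-signability. Requiring every chordless-cycle submatrix to have entry sum $\equiv 0 \pmod 4$ is necessary for total unimodularity but not sufficient. Take
\[
A=\begin{pmatrix}1&1&1&1\\1&1&0&0\\1&0&1&0\\1&0&0&1\end{pmatrix}.
\]
Every chordless cycle of its bipartite graph is one of the three $4$-cycles through the first row and first column, so the all-ones signing passes your test. These three cycles span the cycle space, so every signing that passes is a row/column rescaling of the all-ones matrix, which has determinant $-2$. Hence $A$ has no TU signing. Indeed $[I_4\mid A]$ is not regular: after deleting the first unit column, the remaining seven columns all satisfy $x_2+x_3+x_4=1$, so they form $AG(3,2)$ minus a point, i.e.\ $F_7^*$.

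Camion's theorem needs the mod-$4$ condition on \emph{all} Eulerian square submatrices. Equivalently, the forced signing must still be checked for total unimodularity. So your claim that $M$ fails to be regular exactly when a forced signing violates the hole condition is false. Your reduction therefore stops short of the actual difficulty: a signing that is correct on every hole but still has a square submatrix of determinant $\pm 2$, as above. Showing that this forces an $F_7$ or $F_7^*$ minor is precisely the content of Tutte's theorem, and it is exactly the part you defer. The no-$U_{2,4}$ hypothesis plays no role there, since binary pivots of a $0/1$ matrix stay $0/1$ automatically. The delicate point is rather that real pivots of the signed matrix need not be $0/\pm1$ matrices with the same support as the binary pivot.

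Second, your terminal shapes are misidentified. The $3\times 3$ matrix $J-I$ becomes TU after changing one entry to $-1$, and $[I_3\mid J-I]$ represents $M(K_4)$, which is graphic. So it cannot be a minimal non-signable matrix. The correct obstructions are $J-I$ bordered by an all-ones column (the $3\times 4$ matrix giving $F_7$) and its transpose, $J-I$ bordered by an all-ones row (giving $F_7^*$). They are transposes of one another, which is why they give dual matroids. The pivoting analysis that would reduce a minimal bad matrix to these shapes is only promised, so as written your proposal proves just the necessity direction.
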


Here $F_7$ denotes the \emph{Fano matroid}, a rank $3$ matroid whose size-3 dependent sets are given by the seven 3-sets of points corresponding to lines in the Fano plane, depicted in \Cref{fig: Fano and Kyle}. We let $F_7^*$ denote its dual. 

\begin{figure}[H]
    \centering

\begin{tikzpicture}[scale=1.4, line cap=round, line join=round]
  \tikzset{pt/.style={circle, fill=black, inner sep=1.8pt}}

  \coordinate (O) at (0,0);
  \coordinate (A) at (90:2);
  \coordinate (B) at (210:2);
  \coordinate (C) at (330:2);
  \coordinate (AB) at ($(A)!0.5!(B)$);
  \coordinate (BC) at ($(B)!0.5!(C)$);
  \coordinate (CA) at ($(C)!0.5!(A)$);

  \draw[thick] (A)--(B)--(C)--cycle;
  \draw[thick] (A)--(BC);
  \draw[thick] (B)--(CA);
  \draw[thick] (C)--(AB);
  \draw[thick] (O) circle[radius=1];
  
  \node[label={[xshift=0.2cm]4}] (O) at (O) {};
  \node[label={[yshift=-0.2cm]3}] (A) at (A) {};
  \node[label={1}] (B) at (B) {};
  \node[label={7}] (C) at (C) {};
  \node[label={[xshift=-0.2cm,yshift=-0.2cm]2}] (AB) at (AB) {};
  \node[label={[xshift=0.2cm]6}] (BC) at (BC) {};
  \node[label={[xshift=0.2cm,yshift=-0.2cm]5}] (CA) at (CA) {};
\end{tikzpicture}
\qquad \qquad
\begin{tikzpicture}[scale=1.5, line cap=round]
  \tikzset{
    vertex/.style={circle, fill=black, inner sep=1.6pt},
    elabel/.style={fill=white, inner sep=1.8pt}
  }
  
  \node[vertex] (A) at (0,0) {};
  \node[vertex] (B) at (0,2) {};
  \node[vertex] (C) at (2,2) {};
  \node[vertex] (D) at (2,0) {};

  \draw[thick] (A) -- node[elabel, below] {1} (D)
        -- node[elabel, right] {2} (C)
        -- node[elabel, above] {3} (B)
        -- node[elabel, left]  {4} (A);

  \draw[thick] (B) -- node[elabel, above] {5} (D);
\end{tikzpicture}
\caption{The Fano plane with labeled \textbf{vertices}; and the graph $G = K_4 \setminus e$ with labeled \textbf{edges}.}\label{fig: Fano and Kyle}
    \end{figure}

One can show that both $F_7$ and $F_7^*$ are representable over all fields of characteristic $2$, but over no field of characteristic different from $2$.

We will often omit brackets and commas in our description of the subsets of our ground sets \(E\), especially if those elements are integers from 1 to 9. For example, we write $247$ to denote $\{2,4,7\}$. We will use the following explicit binary representations for the matroids $F_7$ and \(F_7^*\), where the columns of each matrix provide the collections of vectors.
    \[
\left[
\begin{array}{ccccccc}
\scriptstyle 1&\scriptstyle 2&\scriptstyle 3&\scriptstyle 4&
\scriptstyle 5&\scriptstyle 6&\scriptstyle 7\\
\hline
0&1&1&1&1&0&0\\
1&1&0&1&0&1&0\\
1&0&1&1&0&0&1
\end{array}
\right]\qquad\left[
\begin{array}{ccccccccc}
\scriptstyle 1&\scriptstyle 2&\scriptstyle 3&\scriptstyle 4&
\scriptstyle 5&\scriptstyle 6&\scriptstyle 7\\
\hline
1&0&0&0&0&1&1 \\
0&1&0&0&1&1&0 \\
0&0&1&0&1&0&1 \\
0&0&0&1&1&1&1
\end{array}
\right].
\]
With this labeling, the circuits of $F_7$ are
\[123,145,167,247,256,346,357,\]
\[1246,1257,1347,1356,2345,2367,4567\]
and the circuits of $F_7^*$ are
\[1246,1257,1347,1356,2345,2367,4567.\]
The Fano plane diagram in \Cref{fig: Fano and Kyle} is labeled in accordance with this set of circuits.

\begin{remark}\label{rem:Fano}
Our arguments will also exploit the symmetries of the Fano plane. For example, suppose we are fixing a circuit $C$ in $F_7$. In this case we can assume without loss of generality that $C = 123$ if the circuit has size 3, or $C = 1246$ if it has size 4. Similarly, in the dual $F_7^*$, we may assume without loss of generality that a chosen circuit is $1246$. 
\end{remark}

In addition, if $M$ is a matroid on ground set $E = [n]$ containing either $F_7$ or $F_7^*$ as a minor, we will assume without loss of generality that the first seven elements of the ground set $E(M)$ are the elements of that minor and use the binary representations from above for those elements. 

We next recall the definition of the matroid $AG(3,2)$.

\begin{definition}\label{def:A32}
    The matroid \(AG(3,2)\) is defined on ground set \(\mathbb{F}_2^3\), with circuits given by the affine planes of $\mathbb{F}_2^3$. In this paper, we will label the ground set \([8]\) and use the circuits
    \[1246,1257,1347,1356,2345,2367,4567\]
    \[1238,1458,1678,2478,2568,3468,3578.\]
\end{definition}

Importantly, all elements of $AG(3,2)$ are symmetric. Also, for any element $e$ one can check that $AG(3,2) \setminus e\cong F_7^*$, whereas $AG(3,2) / e \cong F_7$.

\subsection{Unique unions and cycle systems}

We next recall the notion of a cycle system on a matroid and discuss some relevant properties. Cycle systems were introduced by Corry, the first author, McClain, Perkinson, and Yi in \cite{CycleSystems}, where they were used to study the $h$-vectors of matroids. A motivating example of a cycle system comes from any basis for the cut space of a connected $n$-vertex graph $G$ given by the incident sets (coboundaries) of a collection of $n-1$ vertices. In this case the elements of the basis have certain properties that can be phrased in terms of a `unique union operator'.

\begin{definition}[Unique Union Operator]\label{def:uniqueunion}
Suppose $A_1,\dots,A_k$ are subsets of a ground set $E$. For $S \subseteq [k]$, we define the \emph{unique union}, denoted
    \[\mathcal{A}_S\quad\text{or}\quad \ast\{A_i : i\in S\}\]
    to be the subset of $E$ consisting of all elements that appear in exactly one of the $A_i$ for $i \in S$. 
\end{definition}

For example, if $A_1 = \{1,2,3\}, A_2=\{3,4\}, A_3 = \{2,3,4,5,6\}$, then we find:
    \[\mathcal{A}_{\{1,2\}} = \{1,2,4\}, \; \; \mathcal{A}_{[3]} = \{1,5,6\}.\]
    
\begin{definition}[Cycle System]
    Let $M$ be a matroid with corank $g$. A cycle system $\mathcal{C}$ on $M$ is a set of $g$ cycles $C_1,\dots,C_g$ such that for any nonempty subset $S\subseteq [g]$,
    \[\mathcal{C}_S \not\in \mathcal{I}(M).\]
\end{definition}

For example, if $M = M(G)^*$ is a cographic matroid with underlying connected graph $G$, one can fix a sink vertex $q$ and for each nonsink vertex $v_i$ let $C_i$ denote the set of edges incident to $i$ (the cut set determined by $v_i$). One can then check that the set $\mathcal{C} = \{C_1, \dots, C_g\}$ is a cycle system for $M$. In fact this `overlap property' of cut sets is what inspired the definition of cycle systems.

One can also see that if $G$ is a connected planar graph, the set of bounded faces $\{F_1, \dots, F_g\}$ in an embedding of $G$ defines a cycle system for the graphic matroid $M(G)$. In \cite{CycleSystems} it is shown that the graphic matroid of any `cone graph' also admits a cycle system.  Not all matroids admit cycle systems (for example the graphic matroid of $K_{3,3}$), and some matroids may admit multiple cycle systems. 

\begin{remark}
    We use the following notation for various classes of matroids:
    \begin{itemize}
        \item $\ms{C}$ is the class of matroids with cycle systems;
        \item $\ms{R}$ is the class of regular matroids;
        \item $\ms{B}$ is the class of binary matroids;
        \item $\ms{G}$ is the class of graphic matroids;
        \item $\ms{G^*}$ is the class of cographic matroids.
    \end{itemize}
\end{remark}

The main goal of this paper is to understand the class $\ms{C}$. We say ${\mathcal C}$ is a \emph{circuit system} if all its elements are circuits.

\begin{example}
Let $M = M(G)$ be the graphic matroid of the graph $G = K_4 \setminus e$ depicted in \Cref{fig: Fano and Kyle}. If we let $C_1 = \{1,4,5\}$ and $C_2 = \{2,3,5\}$, one can verify that the collection $\mathcal{C} = \{C_1, C_2\}$ forms a cycle (in fact, circuit) system for $M$.  In particular we have that ${\mathcal C}_{[2]} = \{1,2,3,4\}$ is a circuit. Another cycle system is given by $\mathcal{C} = \{\{1, 4, 5\}, \{1, 2, 3, 4\}\}$.
\end{example}

The motivation for cycle systems comes from the fact that they allow us to construct a collection of integer vectors whose degree sequence recovers the $h$-vector of the underlying matroid. Just as any graph $G$ gives rise to a collection of $G$-parking functions, any matroid $M$ with cycle system $\mathcal{C} = \{C_1, \dots, C_g\}$ defines a collection of nonnegative integer sequences called \emph{coparking functions}, which we denote ${\mathcal P}^*({\mathcal C})$. The basic idea is to consider sequences $(a_1, \dots, a_g)$ of nonnegative integers that cannot be `set-fired' according to the firing rules described by ${\mathcal C}$.  We refer to \cite{CycleSystems} for the precise definition and more details, but recall the main result here. In what follows, the \emph{degree} of a coparking function is the sum of its entries, and the \emph{degree vector} $(d_1, d_2, \dots)$ counts the number $d_i$ of coparking functions of degree $i$. We then have the following.

\begin{theorem}[{\cite[Proposition 4.3, Theorem 4.5]{CycleSystems}}]\label{thm:Stanley}
Suppose $M$ is a matroid with cycle system ${\mathcal C}$. Then ${\mathcal P}^*({\mathcal C})$ is a pure multicomplex whose degree vector is the $h$-vector of $M$.
\end{theorem}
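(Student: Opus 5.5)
The plan is to model the whole argument on Merino's proof for $G$-parking functions, with the cut sets replaced by the cycles of ${\mathcal C}$ and ``edges leaving $S$'' replaced by unique unions. I do not have the precise definition from \cite{CycleSystems}, so I take it to be the following guess: $a=(a_1,\dots,a_g)\in\mathbb{Z}_{\ge0}^g$ lies in ${\mathcal P}^*({\mathcal C})$ if for every nonempty $S\subseteq[g]$ there is some $i\in S$ with $a_i<|C_i\cap \mathcal{C}_S|$. Everything below depends on this guess.

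\textbf{Step 1 (multicomplex).} This part is formal. If $a$ is coparking and $b\le a$ coordinatewise, then the same witness $i\in S$ works for $b$. Hence ${\mathcal P}^*({\mathcal C})$ is an order ideal of monomials.

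\textbf{Step 2 (degree vector equals the $h$-vector).} I would build a Dhar-type burning bijection $\Phi$ from ${\mathcal P}^*({\mathcal C})$ to the bases of $M$.
\begin{enumerate}
\item Start with $S=[g]$. Use the coparking condition to select an index $i\in S$ that ``burns''.
\item Record the elements of $C_i\cap\mathcal{C}_S$ that are consumed, in a fixed order of $E$.
\item Remove $i$ from $S$ and repeat.
\end{enumerate}
The burning order, together with the first $a_i+1$ recorded elements of each step, should determine a set meeting every $\mathcal{C}_S$. The dependence of every $\mathcal{C}_S$ is exactly what guarantees that each stage has a choice, and that the unburnt complement is a basis.

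Counting then needs to show that the degree $\sum a_i$ corresponds to an activity statistic, namely the number of internally passive elements, so that the count in degree $k$ is $h_k$. As a safer alternative, I would verify the recursion $T_M(x,1)$ by deletion--contraction. Concretely, for an element $e$ lying in exactly one cycle $C_j$ (or in none), the coparking functions split according to whether $a_j$ is maximal. Contracting or deleting $e$ should again yield cycle systems, namely ${\mathcal C}$ restricted in the appropriate way. This matches $T_M=T_{M\setminus e}+T_{M/e}$ at $y=1$.

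\textbf{Step 3 (purity).} I would show that every maximal coparking function has degree $r(M)$, the top index of the $h$-vector. Given a coparking $a$ with $\sum a_i<r(M)$, the burning bijection sends it to a basis $B$ with some internally active element. I would then find an index $i$ such that $a+e_i$ still passes every test $S$. The test sets are the only obstruction, and for any tight set $S$ (where $a_i=|C_i\cap\mathcal{C}_S|-1$ for the critical index) a rank count on $\mathcal{C}_S$ should show it forces degree at least $r(M)$. This count uses that each $\mathcal{C}_S$ is dependent and that the $g$ cycles account for the full corank.

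\textbf{Main obstacle.} I expect Step 3 to be hardest. It needs a superadditivity statement for tight sets, roughly that tight sets are closed under union, in analogy with the ``maximal superstable'' argument for graphs. For general cycle systems this must be extracted from the dependence of all unique unions, without the potential-theoretic structure that graphs provide. I would first try proving the closure of tight sets under union via circuit elimination applied inside $\mathcal{C}_{S\cup T}$.
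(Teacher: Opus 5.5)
The paper gives no proof of this statement; it quotes it from \cite{CycleSystems}, where the machinery is the single-element deletion/contraction of \Cref{prop:del-con cs}. Your guessed definition is consistent with the paper's description of coparking functions as sequences that cannot be set-fired: $\mathcal{C}_S$ plays the role of the cut $\delta(S)$, and $|C_i\cap\mathcal{C}_S|$ plays the role of the out-degree. Step~1 is fine.

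In Step~2 the burning bijection is only a sketch. Nothing shows that the unburnt set is a basis, or that $\sum a_i$ counts internally passive elements. Your deletion--contraction fallback, with $e$ in exactly one cycle, is the right route, but the split you name (whether $a_j$ is maximal) is wrong. Take $e\in\mathcal{C}_{[g]}\cap C_j$, which exists for $g\ge 1$ because $\mathcal{C}_{[g]}$ is dependent, and let $\epsilon_j$ be the $j$th unit vector. The correct split is $a_j=0$ versus $a_j\ge 1$.
\begin{itemize}
\item Since $e\in C_j\cap\mathcal{C}_S$ for every $S\ni j$, the slice $\{a_j=0\}$ is exactly $\mathcal{P}^*(\mathcal{C}\setminus C_j)$ for $M\setminus e$, after dropping coordinate $j$.
\item Replacing each $C_k$ by $C_k\setminus e$ lowers only the threshold of index $j$, only for $S\ni j$, and only by one. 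So $a\mapsto a-\epsilon_j$ maps $\{a_j\ge 1\}$ bijectively onto $\mathcal{P}^*(\{C_k\setminus e\})$ for $M/e$.
\item If $e$ is a loop, \Cref{prop:anton} gives $S\ni j$ with $\mathcal{C}_S=\{e\}$. This $S$ can be fired as soon as $a_j\ge 1$, so that slice is empty.
\end{itemize}
Combined with \Cref{prop:del-con cs} and induction on $|E|$ (base case $g=0$, where $\mathcal{P}^*=\{()\}$), this gives $h_k(M)=h_k(M\setminus e)+h_{k-1}(M/e)$.

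Step~3 is a genuine gap. Burning to bases, union-closure of tight sets and a rank count are hopes, not an argument, and your target is false. Coloops lie in no cycle, so they do not affect $\mathcal{P}^*(\mathcal{C})$. For example, if $g=0$ and $M$ is free of rank $d>0$, the only coparking function has degree $0$. The correct common degree is $r(M)$ minus the number of coloops of $M$.

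The missing idea is that the two bijections above preserve maximality, so purity follows from the same induction.
\begin{itemize}
\item If $a$ is maximal with $a_j=0$, its restriction is maximal in $\mathcal{P}^*(\mathcal{C}\setminus C_j)$, since the lift of any enlargement would be some $a+\epsilon_k$.
\item If $a_j\ge 1$, then $a-\epsilon_j$ is maximal in $\mathcal{P}^*(\{C_k\setminus e\})$.
\end{itemize}
One subtlety remains: $M\setminus e$ gains new coloops, namely the elements $f$ with $\{e,f\}$ a cocircuit of $M$. By \Cref{lem:intersection}, every circuit meets $\{e,f\}$ in $0$ or $2$ elements, so such an $f$ lies in $C_j$ and in no other cycle. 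Then $|C_j\cap\mathcal{C}_S|\ge 2$ for every $S\ni j$, so $a+\epsilon_j$ is coparking whenever $a$ is coparking with $a_j=0$. Hence in that case no maximal element has $a_j=0$.

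Contracting the non-loop, non-coloop $e$ lowers the rank by one and leaves the coloops unchanged. For a loop $e$ only the first slice occurs, and deleting it changes neither rank nor coloops. In every case, each maximal coparking function has degree $r(M)$ minus the number of coloops.
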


A well-known conjecture of Stanley \cite{Stanley} posits that the $h$-vector of any matroid can be realized as the degree vector of some pure multicomplex. Hence \Cref{thm:Stanley} establishes Stanley's conjecture for the case of matroids that admit a cycle system. As discussed in \cite{CycleSystems}, this class includes graphic matroids of coned graphs as well as $K_{3,3}$-minor-free graphs. Also from \cite{CycleSystems}, we have the following conjecture.

\begin{conjecture}
All matroids that admit cycle systems are binary $(\ms{C}\subset \ms{B})$.
\end{conjecture}

In \Cref{sec:binary} we prove this conjecture, and then in \Cref{sec:regular} we prove that such matroids are in fact regular.

\subsection{Properties of cycle systems}

We next recall some properties of cycle systems that we use in our work.
In \cite{CycleSystems} the authors show that if $M$ is a matroid with a cycle system $\mc{C}$, then certain single-element deletions and contractions admit a cycle system that can be obtained by slightly modifying $\mc {C}$.

\begin{proposition}[{\cite[Proposition 3.6]{CycleSystems}}]\label{prop:del-con cs}
Suppose $M$ is a matroid with cycle system ${\mathcal C}$.
\begin{enumerate}
  \item Let $e\in {\mathcal C}_{[g]}\cap C_i$.  Then 
    \[
    {\mathcal C}':=  {\mathcal C} \setminus\{C_{i}\}= \{C_1,\dots,C_{i-1}, C_{i+1}, \dots, C_g\}    \]
    is a cycle system for $M\setminus e$.
  \item Let $e$ be any non-loop of $M$.  Then 
    \[
      {\mathcal C}'':=\{C_1\setminus e,...,C_g\setminus e\}
    \]
    is a cycle system for $M/e$.
\end{enumerate}
\end{proposition}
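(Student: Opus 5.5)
We must prove both parts of \Cref{prop:del-con cs}. Two facts will be used repeatedly. First, a cycle is dependent and nonempty. Second, deleting elements commutes with unique unions: for any $S$ we have $\ast\{C_i\setminus e : i\in S\} = \mathcal{C}_S\setminus e$, because removing $e$ from every set does not change the multiplicity of any other element. The corank bookkeeping is also routine, and we dispose of it first.

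\emph{Corank.} If $e$ is a non-loop, then $r(M/e)=r(M)-1$ and $|E|$ drops by one, so the corank stays $g$. For deletion, we claim that $e\in\mathcal{C}_{[g]}$ is not a coloop. Indeed, if $e$ were a coloop, then $e$ would lie in no circuit and hence in no cycle, contradicting $e\in C_i$. So $r(M\setminus e)=r(M)$, and the corank of $M\setminus e$ is $g-1$. This matches the $g-1$ sets in $\mathcal{C}'$.

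\emph{Contraction.} We must check two things. (a) Each $C_i\setminus e$ is a cycle of $M/e$. (b) Each $\mathcal{C}_S\setminus e$ is dependent in $M/e$.

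For (b), recall that $X\subseteq E\setminus e$ is independent in $M/e$ if and only if $X\cup e$ is independent in $M$ (since $e$ is a non-loop). Now $\mathcal{C}_S$ is dependent in $M$, so $\mathcal{C}_S\cup e$ is dependent, so $\mathcal{C}_S\setminus e$ is dependent in $M/e$.

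For (a), use the standard fact that the circuits of $M/e$ are the minimal nonempty sets of the form $D\setminus e$ with $D\in\cir(M)$. Write $C_i=\bigcup D_j$ with each $D_j$ a circuit of $M$. Each $D_j\setminus e$ is nonempty, since $e$ is not a loop, and it is dependent in $M/e$. A dependent set is a union of circuits exactly when every element lies in a circuit inside the set. So it suffices to show that every $x\in D_j\setminus e$ lies in some circuit of $M/e$ contained in $D_j\setminus e$. If $e\notin D_j$, then $D_j$ is itself a circuit of $M/e$ or a union of such. If $e\in D_j$, then $D_j\setminus e$ is a circuit of $M/e$. Either way $D_j\setminus e$ is a cycle of $M/e$, so $C_i\setminus e=\bigcup_j (D_j\setminus e)$ is a cycle.

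\emph{Deletion.} Since $e\in\mathcal{C}_{[g]}$, the element $e$ lies in exactly one cycle, namely $C_i$. We need two things.

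First, each $C_k$ with $k\ne i$ is a cycle of $M\setminus e$. This is immediate: $e\notin C_k$, and circuits of $M$ avoiding $e$ remain circuits of $M\setminus e$.

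Second, for nonempty $S\subseteq[g]\setminus\{i\}$, the set $\mathcal{C}_S$ must be dependent in $M\setminus e$. Since $e\notin\mathcal{C}_S$, this is the same as $\mathcal{C}_S$ being dependent in $M$, which holds by hypothesis. Notably, we never need to use the sets $\mathcal{C}_{S\cup\{i\}}$.

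\emph{Main obstacle.} The only step with real content is (a) in the contraction case: showing that $C_i\setminus e$ is still a union of circuits, i.e., that no element becomes stranded outside every circuit. This is where the description of the circuits of $M/e$ is needed. Everything else is bookkeeping with the unique union operator.
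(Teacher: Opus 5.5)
The paper gives no proof of this proposition; it is quoted from \cite{CycleSystems}, so your argument has to stand on its own. Most of it does. The following steps are correct:
\begin{itemize}
\item the corank count, including the observation that $e\in C_i$ cannot be a coloop;
\item the identity $\ast\{C_k\setminus e : k\in S\}=\mathcal{C}_S\setminus e$;
\item the dependence of $\mathcal{C}_S\setminus e$ in $M/e$;
\item the entire deletion part.
\end{itemize}
One small omission: the definition requires a \emph{set} of $g$ cycles, so you should note that the $C_k\setminus e$ are nonempty and pairwise distinct. This follows from your (b) applied with $|S|\le 2$.

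The soft spot is the step you yourself call the only one with real content. In (a), for a circuit $D_j$ of $M$ with $e\notin D_j$, you simply assert that $D_j$ is a circuit of $M/e$ or a union of such. The claim is true; it is the standard fact that $C\setminus T$ is a union of circuits of $M/T$ for any circuit $C$. But it is not an immediate consequence of the ``minimal nonempty sets $D\setminus e$'' description you quote. As written, the key subcase is stated rather than proved.

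A short closure argument fills it, and it also makes the decomposition of $C_i$ into circuits unnecessary. Take $x\in C_i\setminus e$. Since $x$ lies in a circuit of $M$ contained in $C_i$, we have $x\in \mathrm{cl}_M(C_i\setminus x)\subseteq \mathrm{cl}_M((C_i\setminus\{x,e\})\cup e)$. Because $r_{M/e}(X)=r_M(X\cup e)-1$, this says $x\in \mathrm{cl}_{M/e}(C_i\setminus\{x,e\})$. So $x$ is not a coloop of $(M/e)|(C_i\setminus e)$, and hence $x$ lies in a circuit of $M/e$ contained in $C_i\setminus e$.

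Alternatively, argue via duality:
\begin{itemize}
\item the cycles of $M$ are the complements of flats of $M^*$;
\item the flats of $M^*\setminus e$ are the sets $F\setminus e$ for flats $F$ of $M^*$;
\item so the cycles of $M/e=(M^*\setminus e)^*$ are exactly the sets $C\setminus e$ with $C$ a cycle of $M$.
\end{itemize}
With either argument inserted, your proof is complete.
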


In addition, under certain conditions the property of admitting a cycle system is preserved under taking 2-sums. This establishes a large class of matroids that admit cycle systems, including the class of graphic matroids of $K_{3,3}$-minor-free graphs.

We next recall some other properties of cycle systems from \cite{CycleSystems} that will be used in our work. First, it turns out that if $M$ is a  matroid with a cycle system ${\mathcal C}$, then \emph{any} circuit of $M$ can be realized as the unique union of some subset of ${\mathcal C}$.

\begin{lemma}[{\cite[Prop. 3.7]{CycleSystems}}]\label{prop:anton}
    Suppose $M$ is a matroid with cycle system ${\mathcal C}$, and let $D$ be a circuit of $M$. Then there exists $S \subseteq [g]$ such that \({\mathcal C}_{S} = D\).
\end{lemma}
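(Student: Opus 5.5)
I would prove \Cref{prop:anton} by induction on $|E(M)\setminus D|$, using \Cref{prop:del-con cs} to pass to smaller matroids that still contain $D$ as a circuit and still carry a cycle system related to $\mathcal{C}$.

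\textbf{Base case.} Suppose $E(M)=D$. Since $D$ is a circuit, $M$ has corank $g=1$. Its unique nonempty cycle is $D$, so $\mathcal{C}=\{D\}$ and we may take $S=\{1\}$. Loops lying outside $D$ are handled directly: if $\ell\notin D$ is a loop, contracting it is forbidden, but deleting it is always allowed. The reason is that $\{\ell\}$ is dependent, so applying the cycle-system condition to singletons of any subsystem shows that $\ell$ must lie in some $C_i$. I would fold these loops into the deletion case below.

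\textbf{Inductive step.} Pick $e\notin D$. There are two cases.

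\emph{Case 1: $e$ is a non-loop and $e\notin \mathrm{cl}(D)$.} Then $D$ remains a circuit of $M/e$. By \Cref{prop:del-con cs}(2), $\{C_i\setminus e\}$ is a cycle system for $M/e$. By induction some $S$ satisfies $\mathcal{C}_S\setminus e = D$, so $\mathcal{C}_S$ is either $D$ or $D\cup e$. To rule out $\mathcal{C}_S=D\cup e$, I would use the freedom in choosing $e$. If every $e\notin\mathrm{cl}(D)$ produced the bad outcome $D\cup\{e\}$, this would have to be consistent across different choices of $e$. So I would compare two such elements $e,e'$ and use uniqueness considerations to derive a contradiction. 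Alternatively, I would contract all of a maximal independent set $T$ of $M/D$ at once. Then $D$ spans, and the rank count shows $r(M/T)=|D|-1$, so the lifted set $\mathcal{C}_S$ lies in $D\cup T$. A set of this form would contain a circuit other than $D$ through a coloop-like element of $T$, which I would exclude via \Cref{lem:intersection} applied to a cocircuit meeting $D\cup T$ only in that element.

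\emph{Case 2: every $e\notin D$ lies in $\mathrm{cl}(D)$.} Then $D$ spans $M$, so $g=|E|-|D|+1$, and I would use deletion. \Cref{prop:del-con cs}(1) requires some $e\notin D$ with $e\in\mathcal{C}_{[g]}$. If $\mathcal{C}_{[g]}\subseteq D$, then $\mathcal{C}_{[g]}$ is a dependent subset of the circuit $D$, so it equals $D$ and we are done with $S=[g]$. Otherwise pick $e\in\mathcal{C}_{[g]}\setminus D$, lying in a unique $C_i$. Delete $e$, apply induction to $M\setminus e$ with $\mathcal{C}\setminus\{C_i\}$, and obtain $S\not\ni i$ with $\mathcal{C}_S=D$ in the original ground set. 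Here no lifting issue arises, since deletion does not alter the cycles.

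The main obstacle is Case 1: controlling whether the contracted element reappears in the lifted unique union. My first attempt would be the bulk contraction of a maximal independent set of $M/D$ together with the cocircuit parity argument. If that fails, I would strengthen the induction hypothesis to a statement about all sets of the form $D\cup X$ with $X$ independent in $M/D$, for which the contraction step is closed.
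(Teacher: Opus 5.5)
As written, your proof has a hole in Case 1. You never rule out $\mathcal{C}_S=D\cup e$, or in the bulk version $\mathcal{C}_S=D\cup X$ with $\varnothing\neq X\subseteq T$, and the repairs you suggest do not work.

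\begin{itemize}
\item Since $T$ is independent in $M/D$, each $x\in X$ is a coloop of $M|(D\cup X)$. So $D$ is the \emph{only} circuit inside $D\cup X$: there is no second circuit to find, and $D\cup X$ is dependent, so no axiom is violated.
\item A cocircuit $K$ with $K\cap(D\cup T)=\{x\}$, together with \Cref{lem:intersection}, only forces the cycle containing $x$ to meet $K$ again outside $D\cup T$. Such elements can lie in two or more of the $C_i$ and cancel from the unique union. Closing this by a parity argument would need binarity, which the paper deduces \emph{from} this lemma.
\item The ``uniqueness considerations'' for comparing $e$ and $e'$ are not available either: uniqueness of unique-union representations is open (\Cref{conj:unique}).
\end{itemize}

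Your loop remark is also inaccurate. \Cref{prop:del-con cs}(1) lets you delete $\ell$ only if $\ell\in\mathcal{C}_{[g]}$, i.e.\ $\ell$ lies in exactly one cycle, not merely in some $C_i$.

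None of this is needed, because your Case 2 argument never uses its hypothesis that $D$ spans $M$. Take an arbitrary $M$; note $g\ge 1$ since $D$ is dependent. Then one of two things holds:
\begin{itemize}
\item $\mathcal{C}_{[g]}\subseteq D$. Then $\mathcal{C}_{[g]}=D$, since $\mathcal{C}_{[g]}$ is dependent and $D$ is a circuit.
\item Some $e\in\mathcal{C}_{[g]}\setminus D$ exists. Deleting it via \Cref{prop:del-con cs}(1) keeps $D$ a circuit of $M\setminus e$ and lowers $|E\setminus D|$. A representation of $D$ by $\mathcal{C}\setminus\{C_i\}$ is verbatim a representation in $M$.
\end{itemize}
Discard Case 1 and the loop discussion and you have a complete proof. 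The paper only cites this lemma, but this is exactly the existence argument it gives in the proof of \Cref{lem:reachable to D}.
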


Second, while general matroids can admit cycle systems whose elements are not all circuits, this is not the case for connected matroids.

\begin{lemma}
[{\cite[Theorem 3.10]{CycleSystems}}]\label{lem:circuitsys}
Suppose $M$ is a connected matroid with cycle system ${\mathcal C}$. Then ${\mathcal C}$ is a circuit system. 
\end{lemma}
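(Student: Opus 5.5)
We argue by induction on $|E(M)|$, using the single-element reductions of \Cref{prop:del-con cs} together with Tutte's theorem that for every element $e$ of a connected matroid $M$, at least one of $M\setminus e$ or $M/e$ is connected. Fix a connected $M$ with cycle system $\mathcal C=\{C_1,\dots,C_g\}$ and fix an index $i$. We want to show that $C_i$ is a circuit.

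Two facts are used throughout. First, each $C_i=\mathcal C_{\{i\}}$ is dependent, hence nonempty. Second, a cycle that is contained in a circuit equals that circuit, since a proper subset of a circuit is independent.

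\textbf{Contraction step.} Suppose some $e\notin C_i$, $e$ not a loop, has $M/e$ connected. By \Cref{prop:del-con cs}(2), $\{C_j\setminus e\}$ is a cycle system for $M/e$. By induction $C_i\setminus e=C_i$ is a circuit of $M/e$. Circuits of $M/e$ are the minimal nonempty sets of the form $D\setminus e$ with $D\in\cir(M)$, so either $C_i$ or $C_i\cup e$ is a circuit of $M$. In the second case $C_i$ would be a proper subset of a circuit, hence independent, which is a contradiction. So $C_i$ is a circuit of $M$.

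\textbf{Deletion step.} Suppose instead $e\notin C_i$ lies in $C_j\cap\mathcal C_{[g]}$ for some $j\ne i$, and $M\setminus e$ is connected. By \Cref{prop:del-con cs}(1), $\mathcal C\setminus\{C_j\}$ is a cycle system for $M\setminus e$, and it still contains $C_i$. By induction $C_i$ is a circuit of $M\setminus e$, hence of $M$.

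\textbf{Choosing $e$; the expected main obstacle.} If $C_i\ne E$, take $e\notin C_i$. If $M/e$ is connected, the contraction step applies. Otherwise $M\setminus e$ is connected, but $e$ need not lie in $\mathcal C_{[g]}$, so \Cref{prop:del-con cs}(1) may not apply. The case to handle is therefore when every $e\notin C_i$ has $M/e$ disconnected and $e$ lies in at least two or in no $C_j$.

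My first attempt would be to replace $\mathcal C_{[g]}$ by a unique union $\mathcal C_S$ with $i\notin S$ and $e\in\mathcal C_S$. I would check that the deletion argument of \Cref{prop:del-con cs}(1) goes through after re-indexing, replacing some $C_j$, $j\in S$, by $\mathcal C_S$ in the spirit of a change of basis. Failing that, I would use \Cref{prop:anton} to write a circuit $D\ni e$ as $\mathcal C_T$ and use $T$ to locate a suitable $C_j$.

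\textbf{The case $C_i=E$.} Here we need a separate count. Each set $\mathcal C_{\{i,j\}}=E\setminus C_j$ must be dependent. Combining this with \Cref{prop:anton} and connectivity, I would show $E$ itself is a circuit. Otherwise choose a circuit $D\subsetneq E$ and an element $f\in E\setminus D$; by connectivity, $D$ can be extended via circuits through $f$, and this should produce a nonempty $S$ with $\mathcal C_S$ independent.

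The small base cases, $g=1$ or $|E|\le 2$, are immediate: a connected matroid of corank $1$ is a single circuit, and $C_1$ must be a dependent cycle in it, hence equal to $E$.
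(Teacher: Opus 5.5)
Your contraction and deletion steps are both correct, but the argument stops where the difficulty lies: you never show that an element $e$ to which one of them applies exists. In the case you leave open (every $e\notin C_i$ has $M/e$ disconnected and $e\notin\mathcal{C}_{[g]}$), and also when $C_i=E$, one has $\mathcal{C}_{[g]}\subseteq C_i$.

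For $g\ge 2$ this configuration makes $C_i$ genuinely \emph{not} a circuit. An element appearing exactly once among the $C_j$ with $j\ne i$ must lie in $C_i$, since otherwise it would be in $\mathcal{C}_{[g]}\setminus C_i$. So $C_i$ contains the two disjoint dependent sets $\mathcal{C}_{[g]}$ and $\mathcal{C}_{[g]\setminus\{i\}}$. What this case really requires is therefore a proof that the configuration cannot occur, not another route to ``$C_i$ is a circuit''.

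Neither of your suggested patches provides that. Unique union is not linear, so replacing some $C_j$ by $\mathcal{C}_S$ is not a change of basis: you have no control over the unique unions of the new family, and $\mathcal{C}_S$ is not known to be a cycle. The counting argument for $C_i=E$ is only announced.

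The missing step is short. Suppose $g\ge 2$ and $\mathcal{C}_{[g]}\subseteq C_i$, and fix $e\in\mathcal{C}_{[g]}$. Let $D$ be any circuit containing $e$. By \Cref{prop:anton}, $D=\mathcal{C}_S$ for some $S$. Since $C_i$ is the only member of $\mathcal{C}$ containing $e$, we have $i\in S$. Every element of $\mathcal{C}_{[g]}$ lies in $C_i$ and in no other $C_j$, so it belongs to $\mathcal{C}_S=D$. Thus the dependent set $\mathcal{C}_{[g]}$ lies inside the circuit $D$, so $D=\mathcal{C}_{[g]}$ is the only circuit through $e$. Since $M$ is connected, every other element shares a circuit with $e$. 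Hence $E=\mathcal{C}_{[g]}$ is a circuit and $g=1$, a contradiction.

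So for $g\ge 2$ you can always pick $e\in\mathcal{C}_{[g]}\setminus C_i$, say $e\in C_j$ with $j\ne i$. Since $M$ has no loops, one of your two steps applies to this $e$: contraction if $M/e$ is connected, deletion if $M\setminus e$ is. This also covers the case $C_i=E$. With this lemma inserted, your induction becomes a complete proof.
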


Even in a general, not-necessarily-connected matroid with a cycle system, one can always find an underlying circuit system. This fact is particularly important in our arguments because circuits are much nicer to deal with than cycles. More precisely, we have the following.

\begin{theorem}[{\cite[Theorem 3.13]{CycleSystems}}]
\label{thm:components}
    Let $W =\oplus_{i=1}^k M_i$ be a direct sum decomposition of a matroid $W$ into connected components, and let $\mathcal{E}$ be a cycle system for $W$. Up to a reordering of the components of $W$, there exist cycle systems $\mathcal{C}^{(i)}$ for $M_i$ such that $\mathcal{E}$ is the disjoint union of $\widetilde{\mathcal{C}^{i}}$, where
    \[\widetilde{\mathcal{C}^{i}} = \{C\cup K_C\mid C\in \mathcal{C}^{(i)},K_C\subseteq \oplus_{j=1}^{i-1} M_j\}.\]
    In particular, $\widetilde{\mathcal{C}^{1}} = \mathcal{C}^{(1)}$.
\end{theorem}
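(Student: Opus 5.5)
The plan is to induct on the number $k$ of connected components, peeling off the component that will play the role of $M_1$. Write $g_i$ for the corank of $M_i$, so that $g=\sum_i g_i$. Recall that every circuit of $W$ lies inside a single component, so every cycle of $W$ is a union of cycles of the individual $M_i$. The ordering of components is found as follows. First find a component $M_1$ such that exactly $g_1$ members of $\mathcal{E}$ are contained in $E(M_1)$, and these members form a cycle system $\mathcal{C}^{(1)}$ of $M_1$. Then pass to a cycle system on $M_2\oplus\cdots\oplus M_k$ built from the remaining members of $\mathcal{E}$. Finally, apply induction and check that the ``debris'' left in $E(M_1)$ is exactly the sets $K_C$.

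The first ingredient is a counting lemma. If $\mathcal{F}$ is any family of cycles of a matroid $N$ such that every nonempty unique union of members of $\mathcal{F}$ is dependent, then $|\mathcal{F}|\le$ corank$(N)$. I would prove this by repeating the argument of \Cref{prop:del-con cs}(1). If $\mathcal{F}\neq\emptyset$, pick $e\in\mathcal{F}_{\text{all}}\cap F$ for some $F\in\mathcal{F}$. This element $e$ lies in a cycle, so it is not a coloop, and deleting it lowers the corank by one. The proof of \Cref{prop:del-con cs}(1) does not use the equality $|\mathcal{F}|=g$, so it shows that $\mathcal{F}\setminus\{F\}$ keeps the dependent-unique-union property in $N\setminus e$. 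Iterating this gives the bound. Applied to $N=U$, where $U$ is any union of components, the lemma says that the number of members of $\mathcal{E}$ contained in $U$ is at most corank$(U)$.

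The main obstacle is producing the first component. To do this I would use the family $\mathcal{E}$ together with the counting lemma as follows. Choose a minimal nonempty union of components $U$ for which the members of $\mathcal{E}$ contained in $U$ number exactly corank$(U)$. Such a $U$ exists, since $U=E(W)$ qualifies. I then need to show that $U$ is a single component. Suppose instead that $U$ is the union of at least two components. Restrict the members contained in $U$ to one of those components, say $M_j$. Combining the unique-union dependence with the fact that each circuit lies in one component, I would aim to show that some proper sub-union already attains equality, contradicting minimality. If this turns out to be delicate, I would fall back on a Hall-type argument: regard each member of $\mathcal{E}$ as ``supported'' on the components it meets, and apply the counting lemma to every union of components. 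Once $M_1$ is found, its $g_1$ members of $\mathcal{E}$ form a family of the right size whose unique unions are dependent in $M_1$, so they give a cycle system $\mathcal{C}^{(1)}$. This also yields $\widetilde{\mathcal{C}^{1}}=\mathcal{C}^{(1)}$.

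For the inductive step, let $\mathcal{E}'$ be the remaining $g-g_1$ members of $\mathcal{E}$. Contract a basis $B$ of $M_1$ one non-loop at a time, using \Cref{prop:del-con cs}(2); this turns $E(M_1)\setminus B$ into loops. Then delete these loops from the sets $E\setminus E(M_1)$ for $E\in\mathcal{E}'$. I need to check that every nonempty unique union of the sets $E\setminus E(M_1)$ is still dependent in $M_2\oplus\cdots\oplus M_k$. Suppose some unique union of members of $\mathcal{E}'$ had its only circuits inside $E(M_1)$. Adjoining members of $\mathcal{C}^{(1)}$, and using that $\mathcal{C}^{(1)}$ spans everything the counting lemma allows inside $M_1$, I expect to reach a contradiction with the counting lemma. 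Having established this, induction gives an ordering $M_2,\dots,M_k$ and cycle systems $\mathcal{C}^{(i)}$. In this ordering each member of $\mathcal{E}'$ has the form $C\cup K'_C$, with $K'_C$ contained in earlier components among $M_2,\dots,M_k$. Reattaching the part of each cycle lying in $E(M_1)$ enlarges $K'_C$ to some $K_C\subseteq\bigoplus_{j<i}M_j$, which gives exactly the claimed form.
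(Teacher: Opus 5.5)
Your peeling framework is sound. Once you have a component $M_1$ containing exactly $g_1$ members of $\mathcal{E}$, the rest of the argument goes through.

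Your inductive check can be made precise as follows. Suppose $(*S)\setminus E(M_1)$ were independent for some nonempty $S\subseteq\mathcal{E}'$, and put $Y=(*S)\cap E(M_1)$. For every $T\subseteq\mathcal{C}^{(1)}$ you have $*(S\cup T)\cap E(M_1)\subseteq *(\{Y\}\cup T)$, and the left side is dependent. So $\mathcal{C}^{(1)}\cup\{Y\}$ is a family of $g_1+1$ subsets of $E(M_1)$ whose nonempty unique unions are all dependent. This contradicts your counting lemma, provided you state the lemma for arbitrary subsets (pick the deleted element inside a circuit of the total unique union), since $Y$ need not be a cycle. Two side remarks: the contraction of a basis of $M_1$ is unnecessary, and deleting loops is not covered by \Cref{prop:del-con cs}; in the paper that fact comes from \Cref{cor:component systems}, which is a consequence of the theorem being proved.

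The genuine gap is the step you yourself call the main obstacle: proving that the first component exists. This is essentially the whole theorem. The statement is equivalent to the existence of an ordering in which every initial union $M_1\cup\dots\cup M_i$ contains exactly $\mathrm{corank}(M_1\oplus\cdots\oplus M_i)$ members, and your proposal gives no argument for this.

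The tools you suggest cannot supply it, for three reasons.

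\begin{itemize}
\item Minimality of $U$ buys nothing. $\mathcal{E}_U$ is a cycle system of the restriction $W|U$, so you must still prove the original claim: a cycle system on a matroid with at least two components has a proper nonempty union of components containing corank-many members.
\item The Hall-type fallback uses only the supports of the members and the inequalities $|\mathcal{E}_V|\le\mathrm{corank}(V)$. These are all satisfied by a support pattern in which every member meets every component, and then no proper union of positive corank is tight. At best Hall's theorem gives you a matching of members to components, not a triangular order.
\item Most importantly, nothing you use involves the connectivity of the $M_i$, and the claim fails without it. Take three loops $a,b,c$ and $\mathcal{E}=\{\{a\},\{a,b\},\{b,c\}\}$. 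This is a cycle system, since all seven unique unions are nonempty sets of loops. Now use the separator decomposition $W=W|\{a,c\}\oplus W|\{b\}$. The block $\{a,c\}$ contains $1$ member but has corank $2$, and the block $\{b\}$ contains $0$ members but has corank $1$. So the minimal tight union of blocks is $W$ itself, not a single block.
\end{itemize}

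Your counting lemma, the fact that each circuit lies in one block, and support counting all remain valid for such coarse blocks. Hence any completion of your key step must exploit unique unions of subfamilies that straddle several components, together with the connectedness of each $M_i$. That missing ingredient is the heart of the proof.
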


\begin{corollary}[{\cite[Corollary 3.14]{CycleSystems}}]\label{cor:component systems}
The following hold.
    \begin{enumerate}
 \item A matroid \(M\) admits a cycle system if and only if each of its connected components admits a cycle system.
        \item If \(M\) admits a cycle system, then it admits a circuit system.
    \end{enumerate}
\end{corollary}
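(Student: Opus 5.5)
For part (1), the forward implication is read off directly from \Cref{thm:components}. If $\mathcal{E}$ is a cycle system for $M=\oplus_{i=1}^k M_i$, that theorem produces (after reordering the components) cycle systems $\mathcal{C}^{(i)}$ for each connected component $M_i$. So each component admits a cycle system, and nothing further is needed in this direction.

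For the converse, I would take cycle systems $\mathcal{C}^{(i)}$ for each $M_i$ and let $\mathcal{C}=\bigsqcup_i \mathcal{C}^{(i)}$.
\begin{enumerate}
\item \emph{Cardinality.} Corank is additive over direct sums, since $|E|$ and $r$ are both additive. Hence $|\mathcal{C}|=\sum_i g(M_i)=g(M)$. A component of corank $0$ (a coloop) contributes the empty cycle system.
\item \emph{Cycles.} Every circuit of $M_i$ is a circuit of $M$, so every cycle of $M_i$ is a cycle of $M$. Thus $\mathcal{C}$ consists of cycles of $M$.
\item \emph{Dependence of unique unions.} Let $S$ be a nonempty index set and write $S=\bigsqcup_i S_i$ according to components. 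Cycles from different components are supported on disjoint ground sets, so no element of $M_i$ lies in a cycle from $M_j$ with $j\neq i$. Therefore the unique union splits as
\[
\mathcal{C}_S=\bigsqcup_i \mathcal{C}^{(i)}_{S_i}.
\]
Choose any $i$ with $S_i\neq\emptyset$. Then $\mathcal{C}^{(i)}_{S_i}$ is dependent in $M_i$, hence dependent in $M$. Since $\mathcal{C}_S$ contains it, $\mathcal{C}_S$ is dependent, and so $\mathcal{C}$ is a cycle system for $M$.
\end{enumerate}

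For part (2), suppose $M$ admits a cycle system. By part (1), each connected component $M_i$ admits a cycle system $\mathcal{C}^{(i)}$. Each $M_i$ is a connected matroid, so \Cref{lem:circuitsys} shows that $\mathcal{C}^{(i)}$ consists of circuits of $M_i$, and these are circuits of $M$. Assembling the $\mathcal{C}^{(i)}$ exactly as in the converse of part (1) gives a cycle system for $M$ all of whose members are circuits, that is, a circuit system.

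The only point needing care is the decomposition of $\mathcal{C}_S$ into its component pieces. This works because the cycles $\mathcal{C}^{(i)}$ are genuinely supported inside $E(M_i)$. That is why I assemble the circuit system from the component systems directly, rather than from the cycles $C\cup K_C$ of \Cref{thm:components}, which can spill across components.
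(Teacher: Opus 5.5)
Your proof is correct and follows the route this corollary is meant to take. The forward direction of (1) is read off from \Cref{thm:components}; the converse is the routine check that the disjoint union of the component cycle systems is a cycle system, using additivity of corank and the componentwise splitting of unique unions; and (2) follows from (1) together with \Cref{lem:circuitsys}, reassembling from the $\mathcal{C}^{(i)}$ rather than from the cycles $C\cup K_C$, exactly as you noted.
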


\section{Base cases and reduction tools for cycle systems}\label{sec:tools}

Recall that the classes of binary and regular matroids both have excluded-minor characterizations (see \Cref{prop:binary u24} and \Cref{prop:regular minors}). Thus, to prove our main results, it suffices to show that no matroid with a cycle system can contain one of those excluded-minors. In this section, we introduce tools that allow us to extract smaller minors from a matroid while maintaining the existence of a cycle system.

Recall that \(\mathcal{C}_{[g]}\) consists of all elements that appear exactly once among all elements in a given cycle system $\mathcal{C} = \{C_1, C_2, \dots, C_g\}$. Our first tool allows us to identify when elements in \(\mathcal{C}_{[g]}\) belong to distinct cycles from ${\mathcal C}$.

\begin{lemma}\label{lem:unique circuit}
Let $M$ be a matroid with cycle system ${\mathcal C}$, and suppose $e,f \in E$ are distinct elements in \(\mathcal{C}_{[g]}\). Let  \(C_i\) be the unique cycle of \(\mathcal{C}\) containing $e$ and \(C_j\) the unique cycle containing $f$. If there exists a circuit $D\in\cir(M)$ containing  $e$ but not $f$, then $i \neq j$.
\end{lemma}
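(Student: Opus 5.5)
We argue by contradiction: suppose $e$ and $f$ both lie in the same cycle $C_i$, and that $i$ is the only index whose cycle contains either element (this follows from $e,f\in\mathcal{C}_{[g]}$). The key tool is \Cref{prop:anton}. Since $D$ is a circuit of $M$, that lemma produces a subset $S\subseteq[g]$ with $\mathcal{C}_S = D$.

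We then read off membership of $e$ and $f$ in $\mathcal{C}_S$ from whether $i\in S$. Because $e$ appears in exactly one cycle of $\mathcal{C}$, namely $C_i$, we have $e\in\mathcal{C}_S$ precisely when $i\in S$. The same reasoning applies to $f$: it appears only in $C_i$, so $f\in\mathcal{C}_S$ precisely when $i\in S$. Consequently $e$ and $f$ are either both in $\mathcal{C}_S$ or both outside it.

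Now $e\in D=\mathcal{C}_S$, which forces $i\in S$, and hence $f\in\mathcal{C}_S=D$. This contradicts the hypothesis $f\notin D$, so $i\neq j$.

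The argument has no real obstacle. The only point to state explicitly is that an element lying in exactly one member of the whole system $\mathcal{C}$ lies in exactly one member of any subfamily containing that member, and in none otherwise. This is immediate from \Cref{def:uniqueunion}.
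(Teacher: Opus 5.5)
Your proposal is correct and follows essentially the same argument as the paper. Both use \Cref{prop:anton} to write $D=\mathcal{C}_S$, observe that $i\in S$ is forced because $C_i$ is the only cycle containing $e$, and conclude that $f$, which also lies uniquely in $C_i$, must then belong to $D$, which is a contradiction.
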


\begin{proof}
    Suppose for the sake of contradiction that some circuit \(D\) contained \(e\) but not \(f\), and yet $i = j$. Let $C = C_i = C_j$.  By \Cref{prop:anton}, \(D\) can be written as some unique union of cycles in \(\mathcal{C}\), and this unique union must include \(C\) because it is the only cycle containing \(e\). However, since \(f\in C\) uniquely as well, \(f\) must show up in any unique union containing \(C\), contradicting \(f\not\in D\). 
\end{proof}

We use this tool to establish the ``base case'' of our reduction method, showing that the relevant forbidden minors do not admit cycle systems. 

\begin{proposition}\label{prop:minors no cs}
   The matroids $U_{2,4}$, $F_7$, and $F_7^*$ do not admit cycle systems.
\end{proposition}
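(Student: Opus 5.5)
All three matroids are connected, so by \Cref{lem:circuitsys} any cycle system on them would be a circuit system. I will therefore assume a circuit system $\mathcal{C}=\{C_1,\dots,C_g\}$ exists and derive a contradiction in each case. The common tool is \Cref{lem:unique circuit}. In each of the three matroids, for any two distinct elements $e,f$ there is a circuit containing $e$ but not $f$. So distinct elements of $\mathcal{C}_{[g]}$ lie in distinct members of $\mathcal{C}$, which gives $|\mathcal{C}_{[g]}|\le g$. Since $\mathcal{C}_{[g]}$ must be dependent, it must contain a circuit, and this forces strong size constraints.

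\emph{$U_{2,4}$.} Here $g=2$ and the circuits are exactly the $3$-subsets of $[4]$. The circuits $C_1,C_2$ must be distinct, since otherwise $\mathcal{C}_{[2]}=\emptyset$ would be independent. But two distinct $3$-subsets of $[4]$ meet in two elements, so $\mathcal{C}_{[2]}=C_1\triangle C_2$ has size $2$. It is then independent, a contradiction.

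\emph{$F_7^*$.} Here $g=3$ and every circuit has size $4$. Every element of $F_7^*$ lies in four of the seven circuits, and every pair of elements lies in exactly two of them. Hence for $e\ne f$ there is a circuit containing $e$ but not $f$. The observation above then gives $|\mathcal{C}_{[3]}|\le 3<4$, so $\mathcal{C}_{[3]}$ is independent, a contradiction.

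\emph{$F_7$.} Here $g=4$ and the circuits have sizes $3$ and $4$. Given $e\ne f$, a line through $e$ avoiding $f$ gives the required circuit, so $D:=\mathcal{C}_{[4]}$ has at most four elements, lying in distinct $C_i$. Because $D$ is dependent, $D$ is itself a circuit of size $3$ or $4$. By \Cref{rem:Fano} we may take $D=123$ or $D=1246$. Each $C_i$ contains at most one element of $D$, so each $C_i$ has at least two elements from the complement of $D$. Every element of that complement lies in either none of the $C_i$ or at least two of them.

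The plan is to finish with a finite case analysis, which I expect to be the main obstacle. The constraints I would use are the following.
\begin{itemize}
\item The pairwise unique unions $C_i\triangle C_j$ must be dependent. Since $F_7$ is binary, these symmetric differences are sums of circuits, so the real content is that no pair coincides and no symmetric difference is too small.
\item The triple unique unions $\mathcal{C}_{\{i,j,k\}}$ must be dependent, and this is where most cases should die.
\end{itemize}
Concretely, I will first fix which element of $D$ lies in which $C_i$. I will then enumerate the few circuits $C_i$ that contain a prescribed element of $D$ and avoid the rest of $D$, up to the stabilizer of $D$ in the Fano automorphism group. For each surviving configuration, I will check that the multiplicities on $E\setminus D$ are consistent with $D$ being exactly the unique union. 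Finally, I will exhibit a triple whose unique union has at most two elements, or at least contains no line and no $4$-circuit. I expect the $|D|=4$ case to be the tighter one, since there $E\setminus D=\{3,5,7\}$ must absorb at least eight incidences.
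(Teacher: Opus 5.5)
\emph{Verdict: the $U_{2,4}$ and $F_7^*$ cases are correct, but the $F_7$ case has a gap — one false reduction step, and the decisive argument is only planned, not carried out.}

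Your treatment of $U_{2,4}$ and $F_7^*$ is correct and essentially the paper's. Both use \Cref{lem:unique circuit} to force the elements of $\mathcal{C}_{[g]}$ into distinct cycles, which gives $|\mathcal{C}_{[g]}|\le g$.

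The $F_7$ case has two problems. First, the inference ``because $D=\mathcal{C}_{[4]}$ is dependent, $D$ is itself a circuit'' is false. A dependent set of size at most $4$ in $F_7$ can be a line plus a point, e.g.\ $1234\supset 123$, so your reduction to $D\in\{123,1246\}$ misses a configuration. The fix, as in the paper, is to take a circuit $D\subseteq\mathcal{C}_{[4]}$. You then use only that the elements of $D$ each lie in exactly one cycle, and in pairwise distinct cycles. Your remark that elements of $E\setminus D$ lie in zero or at least two cycles is then lost, but it is not needed. Second, and more seriously, the step that actually produces the contradiction is only announced as a plan. As written, the $F_7$ case is not proved.

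The missing analysis is short. Suppose $D=1246$. The cycle through $1$ must avoid $2,4,6$, so it is a circuit containing $1$ inside $\{1,3,5,7\}$. No such circuit exists: the only dependent subsets of $1357$ are $357$ and $1357$, and $1357$ is not a circuit. So this case, which you expected to be the tighter one, dies immediately.

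Now suppose $D=123$. The elements $1,2,3$ each occur exactly once, in distinct cycles $C_1,C_2,C_3$. Hence the fourth cycle avoids $123$ and must be $4567$. Also $C_1\in\{145,167\}$ and $C_2\in\{247,256\}$. Then $*\{C_1,C_2,4567\}$ is one of $126,127,125,124$. The only line through $1$ and $2$ is $123$, so each of these is independent, a contradiction. This argument never uses $\mathcal{C}_{[4]}=D$, so it also covers the line-plus-point configuration you omitted. With these two observations your outline becomes the paper's proof.
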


\begin{proof}
Recall that $U_{2,4}$, $F_7$, and $F_7^*$ are all connected matroids. Hence if any admits a cycle system, it must consist of circuits by \Cref{lem:circuitsys}.

$U_{2,4}$ is connected with corank 2, and so any cycle system must consist of two circuits. However, since every circuit of $U_{2,4}$ is size 3 and the ground set size is 4, any pair of circuits ${\mathcal C} = \{C_1, C_2\}$ must satisfy $|{\mathcal C}_{[2]}| \leq 2$, which is an independent set. This violates the unique union property, and so $U_{2,4}$ cannot have a cycle system. 

Recall that $F_7$ has $g = 4$. Suppose there exists a cycle system $\mathcal{C}$ on $F_7$. We know the total unique union $\mathcal{C}_{[4]}$ is dependent, so it contains some circuit $D$. If $|D|$ = 4, without loss of generality we may assume $D=1246$ (see \Cref{rem:Fano}). For each pair from $1,2,4,6$, since there exists a circuit of \(F_7\) that contains one element but not the other (see our labeling of circuits from \Cref{sec:prelim}), \Cref{lem:unique circuit} tells us that each of the four elements must be in a cycle that doesn't contain any of the other three. In particular, the cycle containing $1$ cannot contain $2,4,6$. However, there is no circuit containing $1$ which is contained in the set $\{1,3,5,7\}$.

If instead $|D| = 3$, without loss of generality we can assume $D = 123$. By \Cref{lem:unique circuit}, the elements $1$, $2$, and $3$ must be in separate cycles \(C_1, C_2, C_3\) of \(\mathcal{C}\), respectively. Then since these elements each appear only once in \(\mathcal{C}\), the fourth circuit \(C\) in \(\mathcal{C}\) must be disjoint from $123$, so that \(C = 4567\). Then $C_1 = 145$ or $167$, and $C_2 = 247$ or $256$. Note that in all cases, $*\{C_1,C_2,C\}$ is independent. Thus, there is no cycle system on $F_7$.

Next, suppose there exists a cycle system $\mathcal{C} = \{C_1,C_2,C_3\}$ on $F_7^*$. Then $\mathcal{C}_{[g]}$ contains a circuit, again let it be $1246$ without loss of generality. By applying \Cref{lem:unique circuit} once again, these elements must come from different circuits in the cycle system, yet this is impossible because we have 4 elements with only 3 circuits. Thus, there is no cycle system on $F_7^*$.
\end{proof}

Our strategy to prove \Cref{thm:cycle system binary} and \Cref{thm:cycle system regular} will be as follows: assume that a matroid \(M\) with a cycle system contains some minor $N$ among $U_{2,4}, F_7, F_7^*$, and then perform repeated contractions and deletions while preserving the existence of both the minor and a cycle system, leading to a contradiction according to \Cref{prop:minors no cs}. The next tool, often known as the ``scum theorem'' and originally from \cite{CrapoRota}, allows us to contract away elements outside our prescribed minor until the ranks match, after which the minor appears as a restriction.

\begin{proposition}[{\cite[Theorem 3.3.1]{Oxley}}]\label{prop:scum}
    Let $N$ be a minor of a matroid $M$. Then there is a subset $Z$ of $E(M)\setminus E(N)$ such that $M/Z$ and $N$ have the same rank, and $N$ is a restriction of $M/Z$. Moreover, if $N$ has no loops, then $Z$ can be chosen to be a flat for $M$.
\end{proposition}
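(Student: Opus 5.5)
The statement to prove is the scum theorem (\Cref{prop:scum}). I would first put the minor into a normal form $N = M/I\setminus J$ with $I$ independent in $M$ and $J$ coindependent in $M/I$, and then read off the rank equality directly from that form.

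\textbf{Normal form.} Since deletions and contractions of disjoint sets commute, write $N = M/Y\setminus X$ with $X,Y$ disjoint subsets of $E(M)\setminus E(N)$.
\begin{enumerate}
\item Let $I$ be a basis of $M|Y$. Every element of $Y\setminus I$ is a loop of $M/I$, and contracting a loop is the same as deleting it. Hence $N = M/I\setminus X'$ with $X' = X\cup (Y\setminus I)$.
\item Choose $J\subseteq X'$ maximal subject to $r(M/I\setminus J) = r(M/I)$. Put $K = X'\setminus J$. By maximality, every element of $K$ is a coloop of $M/I\setminus J$, and contracting a coloop is the same as deleting it. Hence $N = M/(I\cup K)\setminus J$.
\item The set $K$ is a set of coloops of $M/I\setminus J$, so it is independent in $M/I$. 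Therefore $I\cup K$ is independent in $M$.
\item Contracting the coloops $K$ lowers the rank by $|K|$, so
\[ r(M/I\setminus J/K) = r(M/I) - |K| = r(M/(I\cup K)). \]
\end{enumerate}

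\textbf{First assertion.} Set $Z = I\cup K \subseteq E(M)\setminus E(N)$. Step 4 gives $r(M/Z\setminus J) = r(M/Z)$. Since $N = M/Z\setminus J$ is a restriction of $M/Z$, this says $r(N) = r(M/Z)$.

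\textbf{Loopless case.} Now suppose $N$ has no loops, and replace $Z$ by its closure $\mathrm{cl}_M(Z)$, which is a flat of $M$.
\begin{itemize}
\item No element $e\in E(N)$ lies in $\mathrm{cl}_M(Z)$. Otherwise $e$ would be a loop of $M/Z$, and hence a loop of its restriction $N$. So $\mathrm{cl}_M(Z)\subseteq E(M)\setminus E(N)$.
\item The elements of $\mathrm{cl}_M(Z)\setminus Z$ are loops of $M/Z$. Therefore $M/\mathrm{cl}_M(Z) = M/Z\setminus(\mathrm{cl}_M(Z)\setminus Z)$.
\item This matroid has the same rank as $M/Z$, namely $r(N)$.
\item It still contains $E(N)$ and restricts to $N$.
\end{itemize}

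The main point needing care is step 2, the choice of $J$. It must be made so that the leftover set $K$ consists of coloops and can be moved to the contraction side without breaking the independence of the contracted set. Once the normal form is in place, the rest is bookkeeping of ranks.
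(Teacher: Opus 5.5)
Your proof is correct, and it is essentially the standard argument from the Oxley reference that the paper cites; the paper gives no proof of its own. That argument writes $N = M/I\setminus J$ with $I\cup K$ independent and $J$ coindependent (your maximal $J$ is just a basis of $(M/I)^*|X'$, the dual of your step 1), reads off $r(N)=r(M/Z)$, and then passes to $\mathrm{cl}_M(Z)$ in the loopless case.
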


We can show that this contraction preserves the existence of a cycle system on \(M/Z\):

\begin{lemma}
\label{lem:cycle system minor contraction}
 Let $M$ be a matroid with cycle system $\mathcal{C}$, and let $Z\subseteq E(M)$. Then $M/Z$ admits a cycle system.
\end{lemma}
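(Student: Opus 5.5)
Since $M/Z = M/z_1/z_2/\cdots/z_k$ for any enumeration $Z=\{z_1,\dots,z_k\}$, it suffices by induction on $|Z|$ to treat single-element contractions. That is, we show that if $M$ has a cycle system $\mathcal{C}$ and $z\in E(M)$, then $M/z$ admits a cycle system. We then iterate, since the hypothesis ``admits a cycle system'' is all that is carried from one step to the next.

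For a single element $z$ there are two cases. If $z$ is not a loop of $M$, then \Cref{prop:del-con cs}(2) applies directly: $\{C_1\setminus z,\dots,C_g\setminus z\}$ is a cycle system for $M/z$.

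If $z$ is a loop, then $M/z = M\setminus z$. The corank drops by one, since $|E|$ decreases and the rank is unchanged, so we must discard one cycle. Here we argue via components rather than using \Cref{prop:del-con cs}(1). The loop $z$ forms its own connected component $\{z\}$ of $M$, a matroid of corank $1$. By \Cref{cor:component systems}(1), each connected component of $M$ admits a cycle system. The components of $M\setminus z$ are exactly the components of $M$ other than $\{z\}$, so each of them admits a cycle system, and applying \Cref{cor:component systems}(1) in the reverse direction gives a cycle system for $M\setminus z = M/z$.

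As a sanity check, and an alternative route in the loop case: $\{z\}$ is a circuit, so by \Cref{prop:anton} it equals $\mathcal{C}_S$ for some $S$. One could instead try to apply \Cref{prop:del-con cs}(1) when $z\in\mathcal{C}_{[g]}$. However, it is not a priori clear that $z$ lies in exactly one cycle, which is why the component argument is cleaner.

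The only real subtlety is this loop case, where \Cref{prop:del-con cs}(2) excludes loops. I expect \Cref{cor:component systems} to dispose of it as above. The main point to verify is that deleting a loop component leaves the other components unchanged as matroids, which is immediate from the direct sum structure.
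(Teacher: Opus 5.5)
Your proof is correct and takes essentially the same route as the paper. You reduce to single-element contractions, handle a non-loop via \Cref{prop:del-con cs}(2), and handle a loop by noting $M/z = M\setminus z$ and invoking \Cref{cor:component systems}. Your side remark is also right and explains why the component argument is needed: a loop can lie in several cycles of $\mathcal{C}$, so it need not lie in $\mathcal{C}_{[g]}$, and \Cref{prop:del-con cs}(1) cannot be applied directly.
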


\begin{proof}
Contracting by $Z$ is equivalent to contracting by elements of $Z$ one at a time, in any order. Contracting by any non-loop leads to a cycle system on the contraction by \Cref{prop:del-con cs}, while contracting by a loop is equivalent to deletion by a loop, which preserves having a cycle system due to \Cref{cor:component systems}. 
\end{proof}

Next, given \(M/Z\) still with cycle system and \(N\) as a minor, we wish to show that if \(M/Z\) contains more elements than \(N\), there exists an element that can be deleted while preserving the existence of a cycle system and \(N\) as a minor. This is more difficult than the contraction step because while \Cref{prop:del-con cs} guarantees the preservation of a cycle system for any non-loop contraction, it only gives such a guarantee for deletion of an element in \(\mathcal{C}_{[g]}\). However, in the cases we analyze for \(U_{2,4}, F_7\), and \(F_7^*\) minor avoidance, the matroid $M/Z$ is nearly identical to the minor $N$: by definition it has the same rank as and contains \(N\) as a restriction, and it will in most cases differ only by loops and parallel elements. The tool introduced below of \emph{unique union cancellation} will aid us in the deletion step.

For some motivation, recall that if \(\mathcal{C}\) is a cycle system for a matroid \(M\), then for any subset of cycles \(C_1,\dots, C_k\) in ${\mathcal C}$, we have that \(\mathcal{C}_{[k]}\) is dependent. We will often be in a situation where we want to include a new cycle $D$ such that the unique union of $\{C_1, \dots, C_k, D\}$ is still dependent. When we choose $D$ appropriately (namely, when $D$ intersects the $C_i$ in a large set), we will end up with a smaller dependent set.

\begin{lemma}[Unique union cancellation]
\label{lem:uuc parallel}
    Let $M$ be a connected matroid with cycle system $\mathcal{C} = \{C_1,\dots, C_g\}$, and suppose that there exist $e_1,\dots,e_k\in \mathcal{C}_{[g]}$ with $k \leq g$, $e_i\in C_i$ and in particular, all of the $e_i$ are distinct. Then for any cycle $D$ in $M\setminus \{e_1,\dots, e_k\}$, we have that
    \[\ast\{C_1, \dots, C_k, D\}\]
    is dependent.
\end{lemma}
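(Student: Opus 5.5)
The plan is to argue by contradiction and induct on $k$. For $k=0$ the set $\ast\{D\}=D$ is a cycle, hence dependent when nonempty. If $D=\emptyset$, the set $\ast\{C_1,\dots,C_k\}=\mathcal{C}_{[k]}$ is dependent by the definition of a cycle system. So assume $k\ge 1$ and $D\neq\emptyset$, and set $X=\ast\{C_1,\dots,C_k,D\}$.

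First I record the structural facts.
\begin{itemize}
\item By \Cref{lem:circuitsys}, every $C_i$ is a circuit.
\item Each $e_i$ lies in $C_i$ and in no other member of $\mathcal{C}$, and $e_i\notin D$. Hence $e_i\in X$ for all $i\le k$, and $C_j$ contains no $e_i$ with $i\ne j$.
\item By \Cref{prop:anton}, every circuit $D'\subseteq D$ equals $\mathcal{C}_T$ for some $T$. Since $e_i\notin D'$ and $e_i$ occurs only in $C_i$, we get $T\cap[k]=\emptyset$. Thus the circuits inside $D$ are unique unions of cycles indexed entirely by $\{k+1,\dots,g\}$.
\end{itemize}
The aim is to use this separation of indices to compare $X$ with a unique union $\mathcal{C}_S$ for a well-chosen $S$, which is dependent by hypothesis.

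The reduction step removes $e_k$. By \Cref{prop:del-con cs}(1), $\mathcal{C}\setminus\{C_k\}$ is a cycle system for $M\setminus e_k$, and $e_1,\dots,e_{k-1}$ remain in its total unique union. The deletion may be disconnected, so I apply \Cref{thm:components} and \Cref{cor:component systems} to pass to a circuit system on each component. There I run the inductive statement, phrased for cycle systems in general rather than only connected $M$, on the cycle $D' = (C_k\setminus e_k)\cup D$ or on a suitable cycle inside it. This avoids $e_1,\dots,e_{k-1}$ in $M\setminus e_k$. The goal is to show that the resulting dependent set $\ast\{C_1,\dots,C_{k-1},D'\}$ can be converted into a dependent subset of $X$. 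The conversion would use circuit elimination on $e_k$ between $C_k$ and a circuit of $M$ through $e_k$.

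An alternative route, which I would try in parallel, is a direct comparison. Choose a circuit $D_0=\mathcal{C}_T\subseteq D$ with $T\subseteq\{k+1,\dots,g\}$, and consider $\mathcal{C}_{[k]\cup T}$, which is dependent. Outside $D$ this set agrees with $X$ up to elements of $\bigcup_{j\in T}C_j\setminus D_0$. The task is to show, possibly by choosing $D_0$ minimal or by iterating over the circuits covering $D$, that some circuit inside $\mathcal{C}_{[k]\cup T}$ avoids these extra elements and so lies in $X$.

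The main obstacle is exactly this transfer of dependence. Dependence of one unique union does not obviously pass to a set differing from it on overlaps, because $M$ is not yet known to be binary, so symmetric-difference arguments are unavailable. I expect the key lever to be the elements $e_i$. Since each $e_i$ is a ``private'' element of $C_i$ lying in $X$, any circuit of $M$ that meets the relevant sets can be forced, via \Cref{lem:unique circuit} and ordinary circuit elimination at $e_i$, to stay within $X$. Making that step rigorous is where I would focus first.
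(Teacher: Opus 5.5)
There is a genuine gap. Neither route reaches a proof; both stop at the step that carries the content, and you say yourself that the ``transfer of dependence'' is not yet rigorous.

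\textbf{The inductive route.} This would not go through as sketched.
\begin{itemize}
\item The set $(C_k\setminus e_k)\cup D$ need not be a cycle of $M\setminus e_k$. For instance, if $C_k$ is a triangle disjoint from $D$, the two remaining edges of the triangle lie in no circuit inside that set.
\item \Cref{thm:components} does not hand you the restricted cycles. The original system decomposes into cycles of the form $C\cup K_C$, so the component systems consist of trimmed cycles, and the $C_i$ and $e_i$ do not carry over cleanly.
\item The final ``conversion by circuit elimination at $e_k$'' is never specified.
\end{itemize}

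\textbf{The direct route, and the missing idea.} The missing idea sits in your second route, and it removes the obstacle outright, with no circuit elimination, binarity, connectedness or induction. Do not work with a single circuit $D_0\subseteq D$. Instead:
\begin{itemize}
\item Write $D=D_1\cup\dots\cup D_n$ with each $D_j$ a circuit.
\item By your third bullet, $D_j=\mathcal{C}_{S_j}$ with $S_j\subseteq\{k+1,\dots,g\}$.
\item Set $S=\bigcup_j S_j$.
\end{itemize}
Two elementary facts then hold.
\begin{itemize}
\item $\mathcal{C}_S\subseteq\bigcup_j\mathcal{C}_{S_j}=D$. An element occurring exactly once among the indices in $S$ also occurs exactly once among the indices of whichever $S_j$ contains that index.
\item $D\subseteq\bigcup_{i\in S}C_i$.
\end{itemize}
These give $\mathcal{C}_{[k]\cup S}\subseteq X$ directly:
\begin{itemize}
\item An element lying in exactly one $C_i$ with $i\le k$, and in no $C_j$ with $j\in S$, avoids $D$ by the second fact, so it lies in $X$.
\item An element lying in exactly one $C_j$ with $j\in S$, and in no $C_i$ with $i\le k$, lies in $D$ by the first fact, so it lies in $X$.
\end{itemize}
Since $\mathcal{C}_{[k]\cup S}$ is dependent, so is $X$. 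This is the paper's argument.

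\textbf{Where you located the obstacle.} You also placed the discrepancy in the wrong spot. With a single $D_0=\mathcal{C}_T$, the elements of $\bigcup_{j\in T}C_j\setminus D_0$ that you flagged lie in at least two $C_j$ with $j\in T$. They are therefore already absent from $\mathcal{C}_{[k]\cup T}$ and do no harm. The elements that actually break the containment are those of $D\setminus D_0$ lying in exactly one $C_i$ with $i\le k$ and in no $C_j$ with $j\in T$. Taking $S$ to cover all of $D$ is exactly what eliminates them.
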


\begin{proof}
    Let $D$ be a cycle of $M$, and write $D$ as a union of circuits \(D = D_1\cup\dots\cup D_n\). By \Cref{prop:anton}, for each \(j\in[n]\) there exists a subset \(S_j\subseteq [g]\setminus[k]\) for which \(\mathcal{C}_{S_j} = D_j\). Let \(S = \bigcup_{j\in[n]}S_j\), and consider the unique union $\mathcal{C}_{[k]\cup S}$. This set is dependent, so to prove the claim it suffices to show that 
    \[
    \mathcal{C}_{[k]\cup S}\subseteq \ast\{C_1,\dots,C_k,D\}.
    \]
    Let $e\in \mathcal{C}_{[k]\cup S}$. Then there is a unique index
    $i\in [k]\cup S$ such that $e\in C_i$. Note that \([k]\cap S = \varnothing\) by the definition of the \(S_j\)'s, so we split into cases.
    
    If $i\in [k]$, then $e\notin \bigcup_{j\in S} C_j$, hence $e\notin \mathcal{C}_{S_j} = D_j$ for any \(j\), and so \(e\not\in D\).
    Thus $e$ lies in exactly one of $C_1,\dots,C_k,D$.
    
    If $i\in S$, then since \(e\) appears uniquely in \(C_i\) and in no other \(C_j\) for \(j\neq i\in S\), it follows that $e\in \mathcal{C}_S\subseteq D$. Furthermore, since $[k]\cap S=\varnothing$, 
    $e\notin C_j$ for all $j\in [k]$. Once again, we find that $e$ lies in exactly one of $C_1,\dots,C_k,D$.
    
    In either case, $e\in \ast\{C_1,\dots,C_k,D\}$, proving the inclusion and hence the dependence.
\end{proof}

\begin{example}\label{ex:uuc}
To illustrate \Cref{lem:uuc parallel}, consider the graphic matroid $M$ on 9 elements associated to the graph depicted in \Cref{fig:matroid9}.

\begin{figure}[ht]
  \centering
  \scalebox{0.85}{\input{matroid9}}
  \caption{A graphic matroid on $9$ elements.}
  \label{fig:matroid9}
\end{figure}

Let $\mathcal{C} = \{C_1,C_2,C_3,C_4\}$, where $C_1 = 123, C_2 = 456, C_3 = 789, C_4 = 124689$. One can check that this defines a cycle system on $M$.

We then have $\mathcal{C}_{[g]} = \mathcal{C}_{[4]} = 357$. Choosing $e_1 = 3$, $e_2 = 5$, we get $M' = M \setminus \{3, 5\}$ with cycle system $\mathcal{C'} = \{C_3, C_4\}$. Let $D = 12467$. Then by \Cref{lem:uuc parallel} we have that $*\{C_1,C_2,D\} = 357$ is dependent. Note that $357$ is the middle triangle in the diagram.
\end{example}

We refer to \Cref{ex:uuc} for an illustration of this result. Often, we will be invoking \Cref{lem:uuc parallel} in situations where the underlying matroid has many parallel elements. To streamline its application, we include a version of the lemma which allows us to take the unique union as though all circuits were from the simplification of $M$. For the statement we will need the following definition.

\begin{definition}\label{def:circuit simplification}
   Let $M$ be a matroid and let \(\widetilde{M}\) be some fixed simplification, where an element is mapped to some representative of its parallel class. For $S \subseteq E(M)$, define \(\widetilde{S}\) to be the image of \(S\) in \(\widetilde{M}\) under this mapping.
\end{definition}

Note that if $C\in \cir(M)$ and $|C|>2$, then $\widetilde{C}\in \cir(\widetilde{M})$. The setup in the following theorem is the same as \Cref{lem:uuc parallel} except with the additional assumption that $e_1,\dots,e_k$ have no parallel elements in $M$.

\begin{theorem}\label{thm:parallel cancellation}
    Let $M$ be a connected matroid with cycle system $\mathcal{C} = \{C_1,\dots, C_g\}$, and suppose that there exist $e_1,\dots,e_k\in \mathcal{C}_{[g]}$ with $k \leq g$, $e_i\in C_i$ and in particular, all of the $e_i$ are distinct and have no parallel elements in $M$. Let $D_1,\dots, D_n$ be circuits of size $>2$ in $M\setminus \{e_1,\dots, e_k\}$, and let $D := D_1\cup \dots\cup D_n$. Then the unique union
    \[*\{\widetilde{C_1},\dots,\widetilde{C_k}, \widetilde{D}\}\]
    is dependent in $\widetilde{M}$.
\end{theorem}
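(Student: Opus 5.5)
The plan is to reduce to \Cref{lem:uuc parallel}, applied to a cycle $D'$ that is slightly larger than $D$, and then push the resulting dependent set forward to $\widetilde{M}$. First I dispose of loops. If $|E(M)|\ge 2$ and $M$ is connected, then $M$ has no loops, so every parallel class is a set of non-loops.

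\textbf{Saturating $D$.} Let $D'$ be the union of $D$ with every element parallel to some element of $D$. If $f$ is parallel to $d\in D_j$, then $(D_j\setminus d)\cup f$ is dependent (by the exchange remark after the definition of parallel elements). Since $|D_j|>2$, it is a circuit of size $|D_j|$, and it lies in $M\setminus\{e_1,\dots,e_k\}$ because the $e_i$ have no parallel elements. Hence $D'$ is a cycle of $M\setminus\{e_1,\dots,e_k\}$ that is a union of parallel classes. Moreover $\widetilde{D'}=\widetilde D$. By \Cref{lem:uuc parallel}, $X:=\ast\{C_1,\dots,C_k,D'\}$ is dependent in $M$. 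The argument there in fact gives a dependent subset $Y:=\mathcal{C}_{[k]\cup S}\subseteq X$, where $S\subseteq[g]\setminus[k]$ and each $D'$-circuit is $\mathcal{C}_{S_j}$.

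\textbf{Transferring to $\widetilde{M}$.} Let $x\in X$ with representative $p$. If $x\in D'$, then $x$'s whole class lies in $D'$ and hence avoids every $C_i$ with $i\le k$. So $p\in\widetilde D$ and $p\notin\widetilde{C_i}$, i.e.\ $p$ lies in the target unique union. If instead $x\in C_i$ with $i\le k$ and $x\notin D'$, then $x$'s class is disjoint from $D'$, so $p\notin\widetilde D$. The only remaining failure is that some $x'\parallel x$ lies in a different $C_l$ with $l\le k$. Now take a circuit $Z\subseteq Y$. If $|Z|>2$, then $\widetilde Z$ is a circuit of $\widetilde M$ and $|\widetilde Z|=|Z|$. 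So it suffices to show two things: that $Z$ can be chosen with $|Z|>2$, and that $\widetilde Z$ avoids the bad representatives described above.

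\textbf{Main obstacle.} The hard step is controlling parallel classes that are split among $C_1,\dots,C_k$, or more generally among several cycles of $\mathcal{C}$; the same issue decides whether $Z$ could be a parallel pair. My first attempt is the following claim: \emph{if $x\parallel x'$, then $x$ and $x'$ lie in exactly the same cycles of $\mathcal{C}$.} To prove it, write the $2$-circuit $\{x,x'\}$ as $\mathcal{C}_T$ via \Cref{prop:anton}. Also, for any circuit $Q$ of size $>2$ through $x$, both $Q$ and $(Q\setminus x)\cup x'$ are unique unions of subsets of $\mathcal{C}$. I would compare these representations, using connectivity and \Cref{lem:unique circuit}-style arguments, to force $T$ to be empty in the relevant sense, i.e.\ that $x,x'$ cannot be separated by $\mathcal{C}$. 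If the claim holds, each parallel class behaves like a single element with respect to all unique unions. Then no bad representatives exist, and $Y$ maps onto the set $\ast\{\widetilde{C_i}\}_{i\in[k]\cup S}$, which is exactly the corresponding unique union in $\widetilde M$ and is contained in the target. This set is dependent because a circuit $Z\subseteq Y$ cannot be a parallel pair (both elements would occur together, contradicting uniqueness), and loops are excluded. If the claim is too strong, the fallback is to replace each $C_i$ ($i\le k$) by the union of the parallel classes it meets, and to check directly that these saturated cycles still satisfy the hypotheses needed in the proof of \Cref{lem:uuc parallel}.
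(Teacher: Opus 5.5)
There is a genuine gap: the claim your argument rests on, that parallel elements $x\parallel x'$ lie in exactly the same cycles of $\mathcal C$, is false. If $M$ is connected with $|E(M)|\ge 3$, there is a circuit $Q\neq\{x,x'\}$ through $x$, and $Q$ avoids $x'$. By \Cref{prop:anton}, $Q=\mathcal{C}_{T'}$ for some $T'$, which is impossible if every unique union contains both of $x,x'$ or neither. So the very comparison you propose refutes the claim. Concretely, a triangle $abc$ with an extra edge $c'$ parallel to $c$ has the face cycle system $\{abc,cc'\}$, in which $c$ lies in both cycles and $c'$ in only one. Split parallel classes are therefore unavoidable.

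Your transfer step also misses a failure mode. From $x\in X\cap D'$ it does not follow that the class of $x$ avoids every $C_i$. Because you saturated $D$, a parallel element $x'\in C_l\cap D'$ is cancelled in $X$ while $x$ survives, and the representative of that class lies in $\widetilde{C_l}\cap\widetilde D$, outside the target. The fallback of saturating the $C_i$ is unsupported. The proof of \Cref{lem:uuc parallel} needs $C_1,\dots,C_k$ to be genuine members of $\mathcal C$ in order to know that $\mathcal{C}_{[k]\cup S}$ is dependent.

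The missing idea is to leave the $C_i$ alone and instead choose the auxiliary cycle so that it cancels exactly the troublesome elements.
\begin{itemize}
\item Choose the representative of each parallel class inside $\bigcup_{i\le k}C_i$ whenever the class meets that set.
\item Let $P$ be the union of all parallel pairs contained in $\bigcup_{i\le k}C_i$. These $2$-circuits lie in $M\setminus\{e_1,\dots,e_k\}$ because the $e_i$ have no parallel elements. So $\widetilde D\cup P$ is a cycle there, and \Cref{lem:uuc parallel} makes $\ast\{C_1,\dots,C_k,\widetilde D\cup P\}$ dependent.
\item In this set, every element of $\bigcup_{i\le k}C_i$ with a parallel partner there is cancelled by $P$.
\item A class meeting both $D$ and some $C_i$ is cancelled, because its representative lies in $\widetilde D\cap\bigcup_{i\le k}C_i$.
\item Every surviving element is a representative. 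This is also why you must use $\widetilde D$ rather than your saturation $D'$.
\end{itemize}
A direct check then gives $\ast\{C_1,\dots,C_k,\widetilde D\cup P\}\subseteq\ast\{\widetilde{C_1},\dots,\widetilde{C_k},\widetilde D\}$. The right-hand side consists only of representatives, so it is dependent in $\widetilde M$. Your concern about extracting a circuit $Z$ of size $>2$ never arises.
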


\begin{proof}
    We first define a particular simplification \(\widetilde{M}\) of \(M\) by choosing representatives of the parallel classes as follows: if at least one element of a given parallel class is in \(\bigcup_{i\in[k]} C_i\), choose any one of those elements, and otherwise choose arbitrarily. We can then define \(\widetilde{D}_i\) for \(i\in[n]\) as the corresponding subset of \(\widetilde{M}\). Note that all of the \(\widetilde{D}_i\) are still circuits because we have assumed that none of the original \(D_i\) had size 2, and that \(\widetilde{D} = \bigcup_{i\in[n]} \widetilde{D}_i\).
        
    Next, for each pair of parallel elements \(e,f\) in \(\bigcup_{i\in[k]} C_i\), the circuit \(\{e,f\}\) is in \(M' := M\setminus\{e_1,\dots, e_k\}\) because we have assumed that none of the \(e_i\) have parallel elements in \(M\). Thus, letting \(P\) denote the union of all these parallel pairs, we have by \Cref{lem:uuc parallel} that 
    \[S :=\ast\{C_1,\dots, C_k, \widetilde{D}\cup P\}\]
    is dependent in \(M\). We claim that this set is a subset of \(T :=\ast\{\widetilde{C}_1,\dots, \widetilde{C}_k,\widetilde{D}\}\) in \(M\). Let \(e\in S\). Then \(e\) is in exactly one of \(C_1,\dots, C_k, \widetilde{D}\cup P\). We will show that \(e\in T\) as well. Notice that if we have $e \in P$, then we have $e \in C_i$ for some $i$ due to the definition of $P$, which would give us a contradiction.
    
    If \(e\in C_i\) for some \(i\in[k]\) then \(e\) cannot be parallel to any other element \(f\) in \(\bigcup_{i\in[k]}C_i\), or else $\{e,f\}\subseteq P$. This would mean \(e\in P\), which would give us a contradiction because \(e\) must be in exactly one of \(C_1,\dots, C_k, \widetilde{D}\cup P\). It is clear for the same reason that \(e\) itself cannot appear in any \(C_j\) for \(j\neq i\). These two observations together show that \(e\in\widetilde{C}_i\) and \(e\not\in \widetilde{C}_j\) for any other \(j\neq i\in [k]\). Combining this with the fact that $e\not\in\widetilde{D}\cup P$ and thus $e \not \in \widetilde{D}$, we get that \(e\) is in exactly one of \(\widetilde{C}_1,\dots, \widetilde{C}_k,\widetilde{D}\). We conclude that \(e\in T\).
    
    Next assume \(e\in \widetilde{D}\cup P\). From the argument above, we know that \(e\not \in P\) and hence we have \(e\in\widetilde{D}\). Notice that $e \not \in C_i$ for any \(i\in[k]\), again because \(e\) must be in exactly one of \(C_1,\dots, C_k,\widetilde{D}\cup P\). So then $e \not \in \bigcup_{i\in[k]} C_i$, which means $e \not \in \widetilde{C_i}$ for any \(i\in[k]\) because the representatives of parallel classes were chosen to prioritize the elements in $\bigcup_{i\in[k]} C_i$, one of which is guaranteed to exist for parallel classes intersecting $C_i$.
    
    We have shown that \(S\subseteq T\), which implies that \(T\) is dependent in \(M\), and hence also in $\widetilde{M}$.
\end{proof}

\section{Matroids with cycle systems are binary}
\label{sec:binary}

Recall from \Cref{prop:binary u24} that a matroid is binary if and only if it avoids $U_{2,4}$ as a minor. To prove \Cref{thm:cycle system binary} we assume that there exists a matroid with a cycle system that contains a $U_{2,4}$ minor, and argue for a contradiction. The main idea will be to use the reduction tools introduced in the previous section to contract the given matroid to a rank 2 minor that still admits a cycle system while maintaining the $U_{2,4}$ subminor. We then analyze the structure of this contracted matroid in \Cref{lem:u2n choice}, which allows us to show in \Cref{lem:delete u24} that there always exists an element we can delete while retaining a cycle system and a \(U_{2,4}\) minor.

\begin{lemma}\label{lem:u2n choice}
    Suppose $M$ is a loopless rank 2 matroid. Then a choice of $n$ elements of the ground set of $M$ forms a $U_{2,n}$ minor if and only if each of the $n$ elements is in a distinct parallel class.
\end{lemma}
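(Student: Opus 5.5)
The plan is to interpret "forms a $U_{2,n}$ minor" as: the restriction $M|X$ to the chosen $n$-set $X$ is isomorphic to $U_{2,n}$. Since $M$ has rank 2 and $U_{2,n}$ has rank 2, any minor of $M$ on $X$ that is isomorphic to $U_{2,n}$ involves no contraction. Indeed, contracting any non-loop drops the rank below 2, and $M$ is loopless, so such a minor must equal the deletion $M\setminus(E\setminus X)=M|X$. This reduces the claim to a statement about restrictions, which I will check directly on independent sets.

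For the forward direction, suppose $M|X\cong U_{2,n}$ (with $n\ge 2$). Every 2-subset of $X$ is then independent in $M$. So no two chosen elements form a circuit, i.e.\ no two are parallel, and the $n$ elements lie in distinct parallel classes. For the reverse direction, suppose the $n$ elements lie in distinct parallel classes. Each singleton is independent because $M$ is loopless. Each pair $\{a,b\}\subseteq X$ is independent, since otherwise $\{a,b\}$ would contain a circuit, which cannot be a singleton (no loops) and so would be $\{a,b\}$ itself, making $a,b$ parallel. Every 3-subset is dependent because $r(M)=2$. Hence the independent sets of $M|X$ are exactly the subsets of size at most $2$, so $M|X\cong U_{2,n}$.

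The only delicate point is the first step, ruling out minors that use contractions. I would handle it by noting that rank is preserved under deletion of a non-coloop and decreases by one when contracting a non-loop. Any minor of rank 2 on $X$ can be written as $M/Z\setminus W$ with $Z$ independent, which forces $Z=\emptyset$. The degenerate cases $n\le 1$ would be remarked on briefly or excluded, since $U_{2,n}$ requires $n\ge 2$.
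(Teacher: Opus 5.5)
Your proposal is correct and follows essentially the same argument as the paper: in a loopless rank-2 matroid a pair is a basis exactly when its two elements are not parallel, so $M|X\cong U_{2,n}$ if and only if the chosen elements lie in distinct parallel classes. Your extra step fills in a point the paper leaves implicit when it identifies ``forms a $U_{2,n}$ minor'' with the restriction. That step shows that any $U_{2,n}$ minor on $X$ must be $M|X$, because contracting a nonempty set in a loopless rank-2 matroid drops the rank below $2$.
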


\begin{proof}
    Suppose $e_1,\dots,e_n\in E(M)$. Since $M$ is loopless of rank 2, $\{e_i,e_j\}$ is a basis if and only if $e_i,e_j$ are in different parallel classes. Thus, $\{e_i,e_j\}$ is a basis for all distinct $i,j\in [n]$, i.e. \(M\mid_{\{e_1,\dots,e_n\}}\cong U_{2,n}\), if and only if each of the $n$ elements is in a distinct parallel class.
\end{proof}

\begin{lemma}\label{lem:delete u24}
    Suppose \(M\) is a rank 2 matroid with \(|E(M)| > 4\), such that $M$ admits a cycle system \(\mathcal{C}\) and contains \(U_{2,4}\) as a minor. Then there exists an element \(e\in E(M)\) such that \(M\setminus e\) also admits a cycle system and contains \(U_{2,4}\) as a minor.
\end{lemma}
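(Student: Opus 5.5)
Since $M$ has rank $2$, \Cref{lem:u2n choice} applied to the loopless part of $M$ reduces the claim to a counting statement about parallel classes. $M$ contains $U_{2,4}$ as a minor exactly when it has at least four distinct parallel classes of non-loops (any minor of a rank $2$ matroid that is $U_{2,4}$ must be a restriction, since contracting a non-loop drops the rank). So we need an element $e$ whose deletion leaves at least four parallel classes and for which $M\setminus e$ still has a cycle system.

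\textbf{Easy cases.} First, if $M$ has a loop $\ell$, delete it. A loop is its own connected component, so by \Cref{cor:component systems} $M\setminus \ell$ still admits a cycle system, and the parallel classes are unchanged. So assume $M$ is loopless. If $M$ is disconnected, then since $M$ has rank $2$ and four parallel classes, $M$ must be a direct sum with a coloop. But a coloop together with the rest would force rank considerations: a rank $2$ loopless disconnected matroid is a direct sum of two rank $1$ matroids, i.e. at most two parallel classes. This contradicts having four classes, so $M$ is connected and, by \Cref{lem:circuitsys}, $\mathcal{C}$ is a circuit system.

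\textbf{Main step.} Since $M$ is connected, \Cref{prop:del-con cs}(1) lets us delete any $e\in\mathcal{C}_{[g]}$ and keep a cycle system. The deletion preserves the $U_{2,4}$ minor unless $e$ is the sole element of its parallel class and $M$ has exactly four classes. So it suffices to find some $e\in\mathcal{C}_{[g]}$ lying in a nontrivial parallel class, or to show that $M$ has at least five classes. Note that $\mathcal{C}_{[g]}$ is dependent, so it is nonempty. Suppose for contradiction that every element of $\mathcal{C}_{[g]}$ is a singleton class and there are exactly four classes. Since $|E|>4$, some class is nontrivial.

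\textbf{Obstacle.} The hard part is reaching a contradiction in this configuration. Here I would use \Cref{thm:parallel cancellation}. Take $e_1,\dots,e_k$ to be the elements of $\mathcal{C}_{[g]}$, relabeled so that each lies in its own $C_i$. If two of them lie in the same $C_i$, \Cref{lem:unique circuit} gives a contradiction, because in $U_{2,4}$-like rank $2$ there is a circuit, a triple of distinct classes, containing one but not the other. Then $k=|\mathcal{C}_{[g]}|\ge 3$, since a dependent set with no parallel pairs in rank $2$ has at least three elements. The simplification $\widetilde{M}\cong U_{2,4}$. Now choose $D$ to be a triangle of $M\setminus\{e_1,\dots,e_k\}$ using the remaining classes; these number $4-k\le 1$ unless $k=3$. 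In the case $k=3$ there is only one remaining class, so no such triangle exists. In that case I would instead compare $\widetilde{C_1},\widetilde{C_2},\widetilde{C_3}$ in $U_{2,4}$: each is a $3$-subset of a $4$-set containing its own $e_i$ and not the other two. This forces $\widetilde{C_i}=\{e_i,x\}$, which has size $2$ and is not a circuit. That is a contradiction, since circuits of size $>2$ simplify to circuits, and circuits of size $2$ cannot contain the singleton $e_i$. The case $k=4$ similarly forces $|\widetilde{C_i}|\le 1$ restrictions. Checking that these parallel-class bookkeeping arguments close all cases, possibly via \Cref{thm:parallel cancellation} with $D$ a parallel-class-avoiding triangle when extra classes exist, is where I expect the real work.
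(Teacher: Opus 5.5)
Your argument is correct and is essentially the paper's. Both reduce to a connected $M$ with exactly four parallel classes, three of them singletons $\{e_1\},\{e_2\},\{e_3\}$ inside $\mathcal{C}_{[g]}$, and use \Cref{lem:unique circuit} to see that the circuits $C_1,C_2,C_3$ each meet $\{e_1,e_2,e_3\}$ exactly once. The paper then finishes by noting that $\{e_1,e_2,e_3\}$ is a cocircuit and invoking \Cref{lem:intersection}; your observation that $\widetilde{C_i}\subseteq\{e_i,x\}$ cannot be a $3$-element circuit of $\widetilde{M}\cong U_{2,4}$ is the same contradiction made directly. Your cases $k=3$ and $k=4$ (the latter is also ruled out immediately by $|E(M)|>4$) already cover everything, so the closing hedge and the appeal to \Cref{thm:parallel cancellation} are unnecessary.
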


\begin{proof}
If $M$ is disconnected, then some connected component of $M$ contains $U_{2,4}$ as a minor. Every other connected component of $M$ must have rank 0 and contain only loops. If $e$ is a loop, then $M\setminus e$ (which is equivalent to $M/ e$) admits a cycle system by \Cref{lem:cycle system minor contraction}, and contains $U_{2,4}$ as a minor.

If $M$ is connected, then $\mathcal{C}$ is a circuit system by \Cref{lem:circuitsys}. If there is an element $e \in \mathcal{C}_{[g]}$ not in the $U_{2,4}$ minor, we can simply delete that element and be done. Otherwise, suppose $\mathcal{C}_{[g]} \subseteq [4]$ where $[4]$ is the ground set of the $U_{2,4}$ minor. As the circuits of $U_{2,4}$ are exactly the sets of size $3$, we may assume without loss of generality that $123 \subseteq \mathcal{C}_{[g]}$.

If $M$ contains some other $U_{2,4}$ minor that does not contain $[3]$, we may delete the element that is not in that minor to satisfy the claim. Hence we may assume that all $U_{2,4}$ minors of $M$ contain $[3]$. This means that $M$ has exactly four parallel classes $\{1\},\{2\},\{3\},\{4,\dots,n\}$, for if we had more classes we would obtain another $U_{2,4}$ minor by \Cref{lem:u2n choice}. We will show that this case is not possible.

Recall that each of the parallel classes is a hyperplane and the complement of a hyperplane is a cocircuit. Then by the above, $123$ is a cocircuit. Let $C_1,C_2,C_3 \in \mathcal{C}$ be the circuits that contain $1,2,3$ respectively. From \Cref{lem:unique circuit}, we see that $C_1,C_2,C_3$ should intersect $123$ exactly once each. However, since $123$ is a cocircuit, this is not possible by \Cref{lem:intersection}.
\end{proof}

\begin{theorem}\label{thm:cycle system binary}
    Let $M$ be a matroid with cycle system $\mathcal{C}$. Then $M$ is binary.
\end{theorem}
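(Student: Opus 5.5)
We argue by contradiction using the excluded-minor characterization in \Cref{prop:binary u24}. Suppose $M$ admits a cycle system but is not binary. Then $M$ has a $U_{2,4}$ minor $N$. Among all such counterexamples, fix one with $|E(M)|$ minimal.

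First we reduce to rank $2$. Since $U_{2,4}$ has no loops, \Cref{prop:scum} gives a set $Z\subseteq E(M)\setminus E(N)$ such that $M/Z$ has rank $2$ and contains $N\cong U_{2,4}$ as a restriction. By \Cref{lem:cycle system minor contraction}, $M/Z$ still admits a cycle system. If $Z\neq\varnothing$, then $M/Z$ is a smaller counterexample, contradicting minimality. So we may assume $M$ itself has rank $2$ and contains $U_{2,4}$ as a restriction.

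Now consider the size of $E(M)$. If $|E(M)|>4$, then \Cref{lem:delete u24} provides an element $e$ such that $M\setminus e$ admits a cycle system and still has a $U_{2,4}$ minor. This again contradicts minimality. Hence $|E(M)|=4$, and since $U_{2,4}$ is a restriction of $M$, we get $M\cong U_{2,4}$. But $U_{2,4}$ admits no cycle system by \Cref{prop:minors no cs}, which is a contradiction. Therefore every matroid with a cycle system is binary.

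Equivalently, one can phrase the argument as an induction on $|E(M)|$ that alternates a contraction step (\Cref{lem:cycle system minor contraction}) with deletion steps (\Cref{lem:delete u24}) until only $U_{2,4}$ remains. The substantive difficulty, keeping both the cycle system and the $U_{2,4}$ minor alive under deletion, has already been handled in \Cref{lem:delete u24}. So the only point to check is that every hypothesis of that lemma holds after the rank-$2$ reduction: rank exactly $2$, more than $4$ elements, a cycle system, and a $U_{2,4}$ minor.
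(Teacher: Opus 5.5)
Your proposal is correct and essentially matches the paper's proof: contract via \Cref{prop:scum} and \Cref{lem:cycle system minor contraction} to a rank-$2$ matroid that still has a cycle system and a $U_{2,4}$ minor, then repeatedly apply \Cref{lem:delete u24} until only $U_{2,4}$ remains, contradicting \Cref{prop:minors no cs}. Framing the repetition as a minimal-counterexample argument is only cosmetic, though it does spare you from re-checking that each deletion preserves rank $2$.
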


\begin{proof}
    For a contradiction assume that \(M\) is not binary, so that \(M\) contains \(U_{2,4}\) as a minor. By \Cref{prop:scum} and \Cref{lem:cycle system minor contraction}, we can contract by some \(Z\subseteq E(M) - E(U_{2,4})\) to obtain the rank 2 matroid \(M/Z\) that admits a cycle system and contains \(U_{2,4}\) as a minor. By repeated application of \Cref{lem:delete u24}, we then get that \(U_{2,4}\) admits a cycle system, which contradicts \Cref{prop:minors no cs}. The result follows.
\end{proof}

\subsection{Corollaries for Binary Matroids}

We next collect some useful corollaries and natural questions that follow from the above results. Recall that if $M$ is a binary matroid on ground set $E$, the \emph{circuit space} of $M$ is the vector subspace of ${\mathbb F}_2^{|E|}$ spanned by the indicator vectors of the circuits in $M$. Elements in this space can be thought of as subsets of $E$, and addition coincides with symmetric difference (which we denote by $C \triangle D$). If $\{C_1, C_2, \dots, C_g\}$ is a collection of subsets of $E$ and $S \subseteq [g]$, we use $\triangle_{i\in S} C_i$ to denote the symmetric difference of the corresponding subsets.

\begin{corollary}\label{cor:circuit basis}
    Let $M$ be a connected matroid with cycle system $\mathcal{C} = \{C_1,\dots,C_g\}$. Then $\{C_1,\dots,C_g\}$ is a basis for the circuit space of $M$.
\end{corollary}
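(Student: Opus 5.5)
By \Cref{thm:cycle system binary}, $M$ is binary. Since $M$ is connected, \Cref{lem:circuitsys} tells us that each $C_i$ is a circuit, so each $C_i$ lies in the circuit space $\mathcal{V}\subseteq \mathbb{F}_2^{|E|}$. For a binary matroid this space has dimension $|E|-r(M)=g$: it is the cycle space of a binary representation, i.e.\ the kernel of a representing matrix of rank $r(M)$. Because we have exactly $g$ vectors in a $g$-dimensional space, it suffices to prove one of two things: that they span $\mathcal{V}$, or that they are linearly independent. I will give both arguments, since each is short.

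\textbf{Spanning.} Let $D$ be any circuit of $M$. By \Cref{prop:anton} there is $S\subseteq[g]$ with $\mathcal{C}_S=D$. This unique union is a subset of the symmetric difference $\triangle_{i\in S}C_i$, but the two need not be equal: elements lying in an odd number $\ge 3$ of the $C_i$ belong to the symmetric difference and not to the unique union. So spanning does not follow immediately, and I switch to proving independence.

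\textbf{Independence.} Suppose $\triangle_{i\in S}C_i=\emptyset$ for some nonempty $S$. Then every element of $\bigcup_{i\in S}C_i$ lies in an even number of the $C_i$ with $i\in S$. In particular no element lies in exactly one of them, so $\mathcal{C}_S=\emptyset$. But the empty set is independent, which contradicts the cycle system axiom. Hence the $C_i$ are linearly independent over $\mathbb{F}_2$, and since there are $g=\dim\mathcal{V}$ of them, they form a basis.

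The only step that needs care is the dimension count $\dim\mathcal{V}=|E|-r(M)$ for binary matroids. I will cite the standard fact that the circuit space of a binary matroid is the orthogonal complement of its cocircuit space, whose dimension is $r(M)$. Equivalently, the fundamental circuits with respect to a fixed basis give $g$ independent vectors that span $\mathcal{V}$, because binary circuits are sums of fundamental circuits. This step is where binarity, and hence \Cref{thm:cycle system binary}, is essential; independence alone was already known from \cite{CycleSystems}.
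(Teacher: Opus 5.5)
Your proof is correct and follows essentially the same route as the paper: the $C_i$ are circuits by connectedness, they are linearly independent over $\mathbb{F}_2$, and binarity gives the circuit space dimension $g$. The only difference is that you prove the independence step directly, via $\mathcal{C}_S\subseteq\triangle_{i\in S}C_i$, so an empty symmetric difference would force $\mathcal{C}_S=\emptyset$; the paper instead cites this step from earlier work. Your abandoned spanning paragraph can simply be dropped.
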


\begin{proof}
Since $M$ is connected it follows from \Cref{lem:circuitsys} that elements of ${\mathcal C}$ are in fact circuits of $M$. The result then follows from \cite[Proposition 3.15]{Components}, where it is shown that the elements of a cycle system are linearly independent in the circuit space of a connected matroid. Since $M$ is binary, this space has dimension $g$, and hence the elements of $\mathcal{C}$ form a basis.
\end{proof}

We note that the converse is not true: even if $M$ is a connected binary matroid that admits a cycle system, not all circuit bases for $M$ are necessarily circuit systems. Indeed, the appendix of \cite{CycleSystems} includes three examples of connected graphic matroids for which no fundamental circuit basis is a circuit system.

It turns out that the unique symmetric difference representation of $D$ guaranteed by \Cref{cor:circuit basis} is in fact a unique union representation. 

\begin{corollary}
\label{cor:uniquerep}
    Let $M$ be a connected matroid with circuit system $\mathcal{C} = \{C_1,\dots,C_g\}$, and let $D$ be a circuit in $M$. Then there exists a unique $S \subseteq [g]$ such that
    \[D = \triangle_{i\in S} C_i =  * \{C_i: i \in S\}.\]
\end{corollary}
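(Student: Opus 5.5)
By \Cref{prop:anton}, there is some $S\subseteq[g]$ with $\mathcal{C}_S = D$. We fix such an $S$ and show that the unique union equals the symmetric difference, and that this $S$ is forced.

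\textbf{Step 1: unique union equals symmetric difference.} Let $T := \triangle_{i\in S} C_i$. This is the set of elements appearing in an odd number of the $C_i$ with $i\in S$. Hence $\mathcal{C}_S \subseteq T$, and
\[
T\setminus \mathcal{C}_S = X := \{e : e \text{ lies in an odd number} \geq 3 \text{ of the } C_i,\ i\in S\}.
\]
By \Cref{thm:cycle system binary}, $M$ is binary, so $T$ lies in the cycle space. In a binary matroid, every element of the circuit space is a disjoint union of circuits. Now $T = D \sqcup X$ with $D$ a circuit, so $X = T \triangle D$ is also in the circuit space and is therefore a disjoint union of circuits.

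\textbf{Step 2: show $X=\varnothing$.} Suppose $X\neq\varnothing$. Then $X$ contains a circuit $D'$, and we apply \Cref{prop:anton} to $D'$ to get $S'$ with $\mathcal{C}_{S'} = D'$. We then iterate, aiming for a contradiction with the linear independence of $\mathcal{C}$ from \Cref{cor:circuit basis}.

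A cleaner route, which I would try first, is the following. Write $X = D'_1\sqcup\cdots\sqcup D'_m$ with each $D'_j$ a circuit. Apply the cancellation idea of \Cref{lem:uuc parallel}: for every subset $R\subseteq S$, the set $\mathcal{C}_R$ is dependent. Choose $R\subseteq S$ to be the set of indices $i\in S$ with $C_i \cap X=\varnothing$, or some similar subfamily that misses $X$. The hope is that $\mathcal{C}_R$ is forced to be a dependent proper subset of $D$, contradicting the minimality of the circuit $D$.

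Concretely, I would argue as follows. Pick $x\in X$ and let $I_x=\{i\in S: x\in C_i\}$, so $|I_x|\ge 3$ and $|I_x|$ is odd. Consider $R = S\setminus\{i\}$ for some $i\in I_x$. The set $\mathcal{C}_R$ is dependent. Since $D$ is a circuit, $\mathcal{C}_R$ cannot be a proper subset of $D$, so $\mathcal{C}_R$ must contain some element outside $D$. Such an element lies in exactly two of the $C_j$ with $j\in S$, one of them being $C_i$. Tracking these multiplicity-2 elements across the various choices of $i$ should yield the contradiction.

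\textbf{Main obstacle.} Step 2 is where I expect the real difficulty: excluding elements of odd multiplicity at least 3. If the combinatorial approach above stalls, I fall back on linear algebra. The circuit $D$ has its own unique representation $D = \triangle_{i\in S''} C_i$ coming from \Cref{cor:circuit basis}. The goal is then to compare $S''$ with $S$ and show that $S''$ also realizes $D$ as a unique union, which would give $S''=S$ and $X=\varnothing$. I would attempt this by induction on $|S|$, using the deletion of an element of $\mathcal{C}_{[g]}$ via \Cref{prop:del-con cs} to pass to a smaller matroid with a cycle system.

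\textbf{Step 3: uniqueness.} Once $D = \mathcal{C}_S = \triangle_{i\in S}C_i$ holds, uniqueness of $S$ follows from \Cref{cor:circuit basis}: the $C_i$ are linearly independent over $\mathbb{F}_2$, so the symmetric-difference representation of $D$, and hence $S$, is unique.
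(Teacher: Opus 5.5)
\textbf{There is a genuine gap: Step 2 is unproved, and along your route it amounts to an open problem.} You start from an arbitrary $S$ with $\mathcal{C}_S = D$ supplied by \Cref{prop:anton}. Then you try to show that no element lies in an odd number $\geq 3$ of the $C_i$, $i\in S$, i.e. that $\triangle_{i\in S}C_i = D$. Because \Cref{prop:anton} is purely existential, such an argument would have to apply to every $S\in\mathcal{S}_D$. Combined with the linear independence in \Cref{cor:circuit basis}, it would then force $|\mathcal{S}_D| = 1$. That is \Cref{conj:unique} for connected $M$, which the paper leaves open.

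The concrete sub-argument also fails. For $R = S\setminus\{i\}$ you claim $\mathcal{C}_R$ must contain an element outside $D$. Nothing prevents $\mathcal{C}_R = D$; this is exactly what happens when $D$ has more than one unique-union representation. "Tracking multiplicity-2 elements" is not yet an argument. Your fallback has the same flaw: even after showing that the symmetric-difference index set $S''$ satisfies $\mathcal{C}_{S''} = D$, the inference $S'' = S$ is again the open uniqueness statement.

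\textbf{The fix is to reverse direction and not use \Cref{prop:anton} at all.}
\begin{itemize}
\item By \Cref{cor:circuit basis}, there is a unique $S$ with $D = \triangle_{i\in S} C_i$, and $S\neq\varnothing$ since $D\neq\varnothing$.
\item Each element of $\mathcal{C}_S$ lies in exactly one (so an odd number) of the $C_i$, $i\in S$. Hence $\mathcal{C}_S \subseteq \triangle_{i\in S} C_i = D$.
\item The cycle-system axiom makes $\mathcal{C}_S$ dependent. Since $D$ is a circuit, $\mathcal{C}_S = D$. No induction or deletion is needed.
\item For uniqueness, any $S'$ satisfying both equalities has in particular $\triangle_{i\in S'} C_i = D$, so $S' = S$. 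This is your Step 3, which is fine.
\end{itemize}
This is the paper's argument. Note that the corollary asserts uniqueness only among index sets realizing $D$ \emph{simultaneously} as a symmetric difference and a unique union. It does not assert uniqueness among all unique-union representations, which is the stronger statement your plan was implicitly aiming at.
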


\begin{proof}
    By \Cref{cor:circuit basis}, the set $\{C_1,\dots,C_g\}$ is a basis for the circuit space. Since $D$ lies in the circuit space, there exists a unique $S\subseteq [g]$ such that $D = \triangle_{i \in S} C_i$. Also, we know that:
    \[* \{C_i: i \in S\} \subseteq \triangle_{i \in S} C_i = D, \]
    and that the unique union on the left must be dependent. Since $D$ is a circuit, we conclude $* \{C_i: i \in S\} = D$.
\end{proof}


From this we see that a cycle system ${\mathcal C}$ for a connected matroid $M$ can be thought of as a special kind of basis for its circuit space: when expressing any circuit $D$ in terms of a linear combination (symmetric difference) of the basis vectors, each element of $D$ appears in \emph{precisely one} of those vectors. It remains an open question whether this property characterizes circuit systems, see \Cref{sec:Further}.


\section{Matroids with cycle systems are regular}
\label{sec:regular}

By \Cref{prop:regular minors} and the results of the previous section, to prove that any matroid \(M\) with a cycle system must be regular, we only have to show that \(M\) cannot contain either \(F_7\) or \(F_7^*\) as a minor. The proof of this will again largely rely on the reduction tools developed in \Cref{sec:tools}, and the main ideas will mirror the proof of the binary case.  Indeed, we will first contract \(M\) to a minor \(M'\) that has the same rank as the desired subminor (either \(F_7\) or \(F_7^*\)), and then seek to describe the structure of \(M'\) as we did in the binary case. We begin with some preliminary observations.


\begin{lemma}\label{lem:f7 simple}
    Let $M$ be a rank 3 matroid with cycle system $\mathcal{C}$ that contains $F_7$ as a minor. Then $\widetilde{M} \cong F_7$.
\end{lemma}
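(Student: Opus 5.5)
Since $M$ has rank $3$ and contains $F_7$ (also of rank $3$) as a minor, I first reduce to the case where $F_7$ is a restriction. Contracting a non-loop would drop the rank below $3$, so the only contractions in a minor sequence from $M$ to $F_7$ are of loops, and those act as deletions. Hence $F_7 \cong M|_{X}$ for some $7$-element set $X \subseteq E(M)$; we label $X = [7]$. By \Cref{thm:cycle system binary}, $M$ is binary. The simplification $\widetilde{M}$ is then a simple binary matroid of rank $3$. It can therefore be viewed as a set of nonzero vectors in $\mathbb{F}_2^3$, so it has at most $7$ elements. Since $X$ consists of seven pairwise non-parallel non-loops, $\widetilde{M}$ already contains $F_7$ and therefore equals it.

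The work is making the binary step rigorous. A simple binary rank-$3$ matroid embeds in $PG(2,2)$, and the only simple rank-$3$ binary matroid on $7$ points is $PG(2,2)=F_7$. Concretely, $M$ has a representation over $\mathbb{F}_2$ in $\mathbb{F}_2^3$. Loops map to $0$, and two non-loops are parallel exactly when they map to the same vector. So the number of parallel classes is at most $|\mathbb{F}_2^3\setminus\{0\}| = 7$. The seven elements of $X$ lie in distinct parallel classes, because $F_7$ is simple. Hence there are exactly $7$ parallel classes, with representatives $1,\dots,7$. Choosing these as representatives gives $\widetilde{M} = M|_{[7]} \cong F_7$, which is well defined by \Cref{rem:simplification}.

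The one point needing care is the reduction to a restriction. Here I would invoke \Cref{prop:scum}: there is a $Z$ with $r(M/Z)=r(F_7)=3=r(M)$ such that $F_7$ is a restriction of $M/Z$. Since $r(M/Z)=r(M)-r(Z)$, we get $r(Z)=0$, so $Z$ consists of loops. Then $M/Z = M\setminus Z$, and $F_7$ is a restriction of $M$. The hypothesis of a cycle system is used only through \Cref{thm:cycle system binary}. I expect no real obstacle; the main subtlety is just this rank bookkeeping.
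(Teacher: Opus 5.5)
Your proposal is correct and follows the paper's argument: $M$ is binary by \Cref{thm:cycle system binary}, a rank-$3$ binary representation has at most seven nonzero column vectors, $F_7$ already uses all of them, and so every other element is a loop or a parallel copy. Your rank count ($r(M/Z)=r(M)-r(Z)$ forces $Z$ to consist of loops) explicitly shows that the $F_7$ minor is a restriction of $M$, a step the paper's proof uses only implicitly.
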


\begin{proof}
Since \(M\) admits a cycle system, it must be binary by \Cref{thm:cycle system binary}. Since in addition $M$ has rank $3$, it is representable as a matrix over \(\mathbb{F}_2\) with 3 rows. Observe that in the binary representation of \(F_7\) below \Cref{fig: Fano and Kyle}, all possible nonzero vectors appear. Hence any additional columns in our representation of \(M\) must either be zero or repeated instances of columns in \(F_7\), corresponding to loops and parallel elements. 
\end{proof}

As in the binary case, we will now use this structural result to show that there always exists a deletion that preserves both a cycle system and an \(F_7\) minor. Whenever $M$ is rank $3$ and contains $F_7$ as a minor, we will use the set $[7]$ to denote the elements of its simplification \(\widetilde{M}\).

\begin{lemma}\label{lem:delete f7}   
    Suppose $M$ is a rank 3 matroid with \(|E(M)| > 7\), such that $M$ admits a cycle system $\mathcal{C}$ and contains \(F_7\) as a minor. Then there exists an element \(e\in E(M)\) such that \(M\setminus e\) also admits a cycle system and contains \(F_7\) as a minor.
\end{lemma}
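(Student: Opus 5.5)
The plan mirrors \Cref{lem:delete u24}. First we reduce to the case where $M$ is connected and every element of $\mathcal{C}_{[g]}$ is a singleton parallel class. Then we use \Cref{thm:parallel cancellation} to reach a contradiction inside $\widetilde{M}\cong F_7$.

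\emph{Reductions.} By \Cref{lem:f7 simple}, $\widetilde{M}\cong F_7$, so $M$ is $F_7$ with some loops and some parallel copies added. If $M$ has a loop $e$, then $M\setminus e=M/e$. This matroid admits a cycle system by \Cref{lem:cycle system minor contraction}, and it still has simplification $F_7$, so we are done. So assume $M$ is loopless. Then $M$ is connected, since adding parallel copies to the connected matroid $F_7$ keeps it connected, and $\mathcal{C}$ is a circuit system by \Cref{lem:circuitsys}. If some $e\in\mathcal{C}_{[g]}$ has a parallel element, we delete $e$. Then \Cref{prop:del-con cs}(1) gives a cycle system on $M\setminus e$, and the parallel class of $e$ survives, so $M\setminus e$ still has an $F_7$ restriction. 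So we may assume that every element of $\mathcal{C}_{[g]}$ is alone in its parallel class. Since $|E(M)|>7$, some parallel class has size at least $2$, and it is disjoint from $\mathcal{C}_{[g]}$. We will show this configuration is impossible.

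\emph{Setting up the contradiction.} The set $\mathcal{C}_{[g]}$ is dependent, so it contains a circuit $D$. The circuit $D$ has size $3$ or $4$, because its elements are singleton classes and so it is a circuit of $F_7$ itself. Any two elements of $F_7$ are separated by some circuit, and that circuit lifts to $M$. Hence \Cref{lem:unique circuit} puts the elements $e_1,\dots,e_k$ of $D$ into distinct circuits $C_1,\dots,C_k$ of $\mathcal{C}$. Each $\widetilde{C_i}$ meets $D$ only in $e_i$, because every $e_j$ appears in exactly one member of $\mathcal{C}$. By \Cref{rem:Fano} we may take $D=123$ or $D=1246$. We then apply \Cref{thm:parallel cancellation} with a subset $e_{i_1},\dots,e_{i_m}$ of the $e_i$ and a cycle $D'$ of $\widetilde{M}$ that avoids those chosen elements. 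The conclusion is that $\ast\{\widetilde{C_{i_1}},\dots,\widetilde{C_{i_m}},\widetilde{D'}\}$ is dependent in $F_7$. We want to choose $D'$ so that this unique union is forced to be independent.

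\emph{The case analysis, which I expect to be the main obstacle.} A naive choice fails. If we cancel against all of $e_1,\dots,e_k$, the unique union always contains $D$ and is therefore dependent automatically. If we cancel against a single $e_i$ with $D'$ a circuit, the result tends to be a circuit again. So the choice must use some but not all of the $e_i$. The remaining $e_j$ are then placed inside $D'$, where they get cancelled against nothing, and $D'$ should overlap $\bigcup\widetilde{C_{i}}$ heavily. The finite list of possibilities for the $\widetilde{C_i}$ is short. For $D=1246$, each $\widetilde{C_i}$ lies in $\{e_i\}\cup\{3,5,7\}$, and no circuit of $F_7$ through $1$ lies inside $1357$. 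This is already the contradiction used in \Cref{prop:minors no cs}, so this case should close with no cancellation at all. For $D=123$, each $\widetilde{C_i}$ is one of the two triangles through $e_i$ off $D$, or a $4$-circuit meeting $D$ only at $e_i$. For each combination, I would first try taking two of the $e_i$ and $D'$ a $4$-circuit through the third $e_j$ that contains both $\widetilde{C_i}\setminus e_i$. The aim is to leave a unique union of size at most $3$ that is not a line of the Fano plane.

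If no choice of $D'$ works, the fallback is to also use the circuits of $\mathcal{C}$ through the nontrivial parallel class. This class has no element in $\mathcal{C}_{[g]}$, so its elements appear in at least two members of $\mathcal{C}$. Combining this with the unique-union condition on subsets containing those members should give a counting contradiction, as in the final step of \Cref{prop:minors no cs}.
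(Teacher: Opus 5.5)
\textbf{Verdict: there is a genuine gap.} The case $D=123$ is left open, and the step you propose for it cannot work.

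What matches the paper: your reductions (delete a loop; delete any element of $\mathcal{C}_{[g]}$ that has a parallel copy) agree with the paper's proof. So does the case $D=1246$. Your observation that $\widetilde{C_1}\subseteq 1357$ contains no circuit through $1$ is the paper's cocircuit argument via \Cref{lem:intersection}.

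Why your plan for $D=123$ fails: suppose you cancel against $C_1,C_2$ and take $D'$ through the third element $3$. Then $1$ lies in exactly one of $\widetilde{C_1},\widetilde{C_2},\widetilde{D'}$, because $D'$ avoids $1,2$ and $\widetilde{C_2}$ avoids $1$. Likewise $2$ lies in exactly one of them. Also $3$ lies only in $\widetilde{D'}$. So the unique union always contains the line $123$ and is dependent, and no contradiction arises. Such a $4$-circuit does not even exist: every $4$-circuit through $3$ meets $4567$ in exactly two points, while $(\widetilde{C_1}\cup\widetilde{C_2})\setminus\{1,2\}$ has three. Your heuristic of placing the remaining $e_j$ inside $D'$ is backwards. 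Anything in $D'$ that lies in no $\widetilde{C_i}$ survives into the unique union. The fallback counting argument via the nontrivial parallel class is not an argument as stated.

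The missing idea is to choose $D'$ disjoint from all of $D$. Take $k=2$ and $D'=4567$, the complement of the line $123$.
\begin{itemize}
\item A $4$-circuit of $F_7$ is the complement of a line, and two lines meet in one point. So no $4$-circuit meets $123$ in exactly one element, and your ``or a $4$-circuit'' alternative is vacuous.
\item Hence $\widetilde{C_1}\in\{145,167\}$ and $\widetilde{C_2}\in\{247,256\}$.
\item In each of the four combinations, $\ast\{\widetilde{C_1},\widetilde{C_2},4567\}=\{1,2,x\}$ with $x\in\{4,5,6,7\}$.
\item Such a set is not a line, so it is independent. This contradicts \Cref{thm:parallel cancellation}.
\end{itemize}
This closes the case without using the extra parallel class.
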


\begin{proof}
If $M$ is disconnected, then (as in the beginning of the proof of \Cref{lem:delete u24}) $M$ contains a loop $e$, and $M\setminus e$ admits a cycle system and contains an $F_7$ minor.

If $M$ is connected, then \(\mathcal{C}\) is a circuit system. Note that \(M\) has no loops since it is connected and has more than one element.  Also by \Cref{lem:f7 simple}, we have that \(\widetilde{M} \cong F_7\). We will use \([7]\) to denote the ground set of an \(F_7\) minor whose circuits are given by the same labeling as described in \Cref{sec:prelim}. If there exists an element of \(\mathcal{C}_{[g]}\) that is not in \([7]\), then we can delete that element and retain \([7]\) as an \(F_7\) minor, so we are done. Hence we assume that \(\mathcal{C}_{[g]}\subseteq [7]\). Since $\mathcal{C}_{[g]}$ is dependent it must contain a circuit, which must be either size 3 or 4 because these are the only circuit sizes on \(F_7\). We will show that neither of these cases can happen.

First suppose \(\mathcal{C}_{[g]}\) contains a circuit of size 4, which we label $1246$ without loss of generality (see \Cref{rem:Fano}). By \Cref{lem:unique circuit}, we know that $1,2,4,6$ are elements of a unique circuit in $\mathcal{C}$, which we label \(C_1, C_2, C_4, C_6\). Note that if any of $1,2,4,6$ had a parallel element in $M$, we could delete the corresponding element of $1246$ and retain an $F_7$ minor, proving the lemma. Thus, we can assume \(C_1, C_2, C_4, C_6\) are not size 2 circuits. Then their images $\widetilde{C_1},\widetilde{C_2},\widetilde{C_4},\widetilde{C_6}$ in $F_7$ are also circuits, each of which contains no other element of 1246. But $1246$ is a cocircuit of \(F_7\) and thus by \Cref{lem:intersection} cannot have size $1$ intersection with these circuits, a contradiction.

    Now suppose \(\mathcal{C}_{[g]}\) contains a circuit of size 3, which we label $123$ without loss of generality. Again let $C_1,C_2,C_3$ be the circuits from ${\mathcal C}$ that contain the elements $1,2,3$, respectively. We may again assume that $1$, $2$, and $3$ have no parallel elements in $M$, otherwise we could delete the parallel element and prove the lemma. Then according to our labeling convention for $F_7$ we have $\widetilde{C_1} = 145$ or $167$, and $\widetilde{C_2} = 247$ or $256$. We apply \Cref{thm:parallel cancellation} with the cycle $D = 4567$, and get that
    \[*\{\widetilde{C_1},\widetilde{C_2},\widetilde{D}\} = 12\quad\text{or}\quad 12x\quad\text{where}\quad x\neq 3\]
    is dependent, which is a contradiction. The result follows.
\end{proof}

\begin{lemma}\label{lem:no f7}
    Let $M$ be a matroid with cycle system $\mathcal{C}$. Then $M$ does not contain the Fano matroid $F_7$ as a minor.
\end{lemma}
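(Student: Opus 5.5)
We argue by contradiction, exactly mirroring the proof of \Cref{thm:cycle system binary}. Suppose $M$ admits a cycle system $\mathcal{C}$ and contains $F_7$ as a minor. Since $F_7$ has no loops, \Cref{prop:scum} provides a set $Z\subseteq E(M)\setminus E(F_7)$ such that $M/Z$ has rank $3$ and contains $F_7$ as a restriction. By \Cref{lem:cycle system minor contraction}, $M/Z$ still admits a cycle system. So we may replace $M$ by $M/Z$ and assume that $M$ is a rank $3$ matroid with a cycle system and an $F_7$ minor.

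Next we reduce the size of $M$ one element at a time. As long as $|E(M)|>7$, \Cref{lem:delete f7} gives an element $e$ such that $M\setminus e$ still has rank $3$, admits a cycle system, and contains $F_7$ as a minor. Rank is preserved here because $F_7$ is a restriction of rank $3$ and the deleted element lies outside the chosen copy of $F_7$. We apply \Cref{lem:delete f7} repeatedly. Each application decreases $|E(M)|$ by one, so the process terminates with a rank $3$ matroid on exactly $7$ elements that admits a cycle system and contains $F_7$ as a minor.

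A matroid on $7$ elements with an $F_7$ minor must equal that minor, since no elements remain to be deleted or contracted. Hence $F_7$ itself admits a cycle system, contradicting \Cref{prop:minors no cs}. This contradiction proves the lemma.

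The substantive work is already contained in \Cref{lem:delete f7} and \Cref{lem:f7 simple}, so no step here is a real obstacle. The one point that needs care is that the hypotheses of \Cref{lem:delete f7} persist after each deletion, namely rank exactly $3$ and the presence of an $F_7$ minor; that lemma's own statement guarantees both.
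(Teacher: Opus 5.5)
Your proof is correct and follows the paper's own argument: contract to rank $3$ via \Cref{prop:scum} and \Cref{lem:cycle system minor contraction}, delete repeatedly using \Cref{lem:delete f7}, and reach a contradiction with \Cref{prop:minors no cs}. One small correction: \Cref{lem:delete f7} does not guarantee that the deleted element lies outside your chosen copy of $F_7$, since in its proof an element of the copy with a parallel partner may be deleted and the partner takes its place. Rank $3$ is preserved anyway, because $M\setminus e$ still has an $F_7$ minor, so $3\le r(M\setminus e)\le r(M)=3$.
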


\begin{proof}
    Suppose $M$ contained $F_7$ as a minor. By \Cref{prop:scum}, there exists \(Z\subseteq M\) such that $M/Z$ has rank $3$, admits a cycle system, and contains $F_7$ as a minor. By repeated applications of \Cref{lem:delete f7}, we then get that \(F_7\) admits a cycle system, which contradicts \Cref{prop:minors no cs}. 
\end{proof}

We next apply a similar argument to show that matroids with cycle systems cannot contain $F_7^*$ as a minor. It is no longer the case that the simplification of a rank 4 matroid with cycle system and \(F_7^*\) minor is unique, but we can prove something almost as good. In what follows we will refer to the matroid $AG(3,2)$ from \Cref{def:A32}.

We will also compute with contractions of matroids via matrix representations. Recall that if $M$ is a matroid with a given matrix representation, then the contraction of an element $e \in E(M)$ can be represented as follows. We first use Gaussian elimination to transform the given matrix into a row equivalent matrix where the column corresponding to $e$ is some basis vector $\vec{e}_i$. Then removing the $i$th row and column $e$ from the original matrix provides a representation for the contraction $M / e$.

\begin{remark}\label{rem:vectors}
In our arguments, we will be using various subsets of vectors in ${\mathbb F}_2^4$. After we choose a basis, there are $15$ nonzero such elements, and it will be useful to have the following labeling convention.

\[
\begin{array}{c|ccccccc|ccccccc|c}
 & 1&2&3&4&5&6&7
 & f_1&f_2&f_3&f_4&f_5&f_6&f_7 & g \\ \hline
 & 1&0&0&0&0&1&1 & 0&1&1&1&1&0&0 & 1 \\
 & 0&1&0&0&1&1&0 & 1&0&1&1&0&0&1 & 1 \\
 & 0&0&1&0&1&0&1 & 1&1&0&1&0&1&0 & 1 \\
 & 0&0&0&1&1&1&1 & 0&0&0&1&1&1&1 & 0  
\end{array}
\]

Note that the first $7$ columns provide a representation for $F_7^*$ whose dependencies agree with the labeling given in \Cref{sec:classes}. Also note that adding the last vector $g$ to this representation realizes $AG(3,2)$.

\end{remark}

\begin{proposition}\label{prop:f7* simplification}
    Let $M$ be a rank 4 matroid with cycle system $\mathcal{C}$ that contains $F_7^*$ as a minor. Then one of the following holds:
    \begin{enumerate}
        \item $\widetilde{M} \cong F_7^*$, or
        \item $M$ contains $AG(3,2)$ as a minor.
    \end{enumerate}
\end{proposition}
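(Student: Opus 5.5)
By \Cref{thm:cycle system binary}, $M$ is binary. It has rank $4$, so after discarding loops and all but one element of each parallel class it is represented by a set of distinct nonzero columns in $\mathbb{F}_2^4$. The $F_7^*$ minor has rank $4$ as well. By \Cref{prop:scum} with $Z=\emptyset$ (the ranks already agree, and the contraction set is rank-zero), this minor is a restriction of $M$. After a change of basis I may therefore take the $F_7^*$ elements to be the columns $1,\dots,7$ of \Cref{rem:vectors}. Then $\widetilde{M}$ is represented by columns $1,\dots,7$ together with some subset $X\subseteq\{f_1,\dots,f_7,g\}$. If $X=\emptyset$ we are in case (1). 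So I assume $X\neq\emptyset$ and aim to show that $M$ has an $AG(3,2)$ minor.

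If $g\in X$, then the columns $1,\dots,7,g$ realize $AG(3,2)$ as a restriction, and we are done. So the substantive case is when $X$ contains some $f_i$ but not $g$. The point is that $AG(3,2)$ is the unique binary rank-$4$ matroid on $8$ elements containing no three-element circuit, i.e.\ the affine geometry given by the complement of a hyperplane. Each $f_i$ lies on a hyperplane-complement-type structure with the columns $1,\dots,7$. I will first try to show directly that columns $1,\dots,7,f_i$ contain an $AG(3,2)$ restriction after a relabeling. The likely failure is that $f_i$ creates a triangle with two of the $F_7^*$ elements, since $F_7^*$ itself has no triangles. In that case I will instead find a different $8$-element subset of $\{1,\dots,7\}\cup X$ that avoids $0$ in some linear functional's $1$-set, meaning it lies in an affine hyperplane complement $\{v:\phi(v)=1\}$, which has exactly $8$ points. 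For column $g$ this functional is $\phi=x_1+x_2+x_3$ (check: columns $1$ through $7$ all have $\phi(v)=1$ except where adjusted). So I will compute, for each $f_i$, which functionals $\phi$ have $\{v:\phi(v)=1\}\subseteq\{1,\dots,7\}\cup X$.

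That computation most likely shows that adding a single $f_i$ is not enough, so that case (2) cannot always be reached by restriction alone. Where it fails, I will use the cycle system, which is why the hypothesis is there. Concretely, the $F_7^*$ restriction together with $f_i$ gives a rank-$4$ binary simple matroid $N$ that is a restriction of $\widetilde{M}$. I will show that $N$ contains an $F_7$ minor, by contracting a suitable element $e$ with $N/e$ simple of rank $3$ and all $7$ points present. This contradicts \Cref{lem:no f7}, because $F_7$ minors of $N$ are minors of $M$. So every nonempty $X$ either contains $g$, or forces an $F_7$ minor, or forces the full affine structure. Using the symmetry of $F_7^*$ (its automorphism group acts transitively on $\{f_1,\dots,f_7\}$ in this picture, since these are the points on the complementary side), I only need to check one $f_i$, say $f_1$. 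I will then verify by row reduction as in the contraction recipe that $N/e\cong F_7$ for an appropriate $e$.

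The main obstacle is this last verification: choosing $e$ correctly and confirming that the $15$-point picture behaves as expected. If the contraction check fails for $f_1$ alone, I will fall back to showing that $f_1\in X$ forces further points into $X$ until $g$ or an $F_7$ minor appears.
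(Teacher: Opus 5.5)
Your final strategy is the paper's: $g\in X$ gives $AG(3,2)$ as a restriction, and any $f_i\in X$ is ruled out by finding an $F_7$ minor of $N=M|\{1,\dots,7,f_i\}$ with a single contraction, contradicting \Cref{lem:no f7}. However, you only announce that step. You call it the main obstacle and plan a fallback in case it fails, so the argument is incomplete exactly where the work lies.

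The check is short. In the coordinates of \Cref{rem:vectors} one has $f_i=i+g$ for every $i$, and $F_7^*$ has no $3$-element circuits. So for $i=1$ the only triangles of $N$ are $\{2,3,f_1\}$, $\{4,5,f_1\}$ and $\{6,7,f_1\}$. The element $1$ lies on none of them, since its partner would be $1+f_1=g\notin N$. Hence $N/1$ is a simple rank-$3$ binary matroid on $7$ elements. Concretely, deleting the first row and column $1$ leaves all seven nonzero vectors of $\mathbb{F}_2^3$, so $N/1\cong F_7$. Contracting any other element of $N$ creates a parallel pair, so $e=i$ is forced; this is the paper's $M/i$.

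For the remaining $f_i$, your transitivity claim is true but needs more justification than ``the complementary side.'' By unique representability of binary matroids, each automorphism $\sigma$ of $F_7^*$ is induced by some $T\in GL_4(\mathbb{F}_2)$ permuting columns $1,\dots,7$. Such a $T$ preserves the unique linear functional equal to $1$ on all seven columns. Hence $T$ fixes $g$ and sends $f_i=i+g$ to $f_{\sigma(i)}$. The paper instead treats $i\le 4$ directly and $i=5,6,7$ by a change of basis; either route works.

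Finally, drop the search for other affine $8$-point sets. A single $f_i$ already yields a contradiction, so alternative (2) arises only through $g$. That detour also contains an error: $x_1+x_2+x_3$ vanishes on columns $4,\dots,7$. The correct functional is $x_1+x_2+x_3+x_4$, whose $1$-set is exactly $\{1,\dots,7,g\}$ and whose nonzero $0$-set is $\{f_1,\dots,f_7\}$.
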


\begin{proof}
By \Cref{thm:cycle system binary}, $M$ is binary. Choose a representation of $M$ so that the $F_7^*$ minor is given by the first 7 columns of the matrix in \Cref{rem:vectors} above. As $M$ is rank $4$, the only other (non-parallel) elements must be chosen from the remaining columns. If column $g$ is among the vectors in $M$, then we obtain $AG(3,2)$ as a minor and the result follows. Otherwise, either \(\widetilde{M}\cong F_7^*\) or its representation contains at least one of the \(f_i\) for some  \(i\in[7]\). We will show that this latter case is not possible.

First assume $M$ contains $f_1$, and consider the submatroid of $M$ induced by the collection:
\[
\left[
\begin{array}{ccccccccc}
\scriptstyle 1&\scriptstyle 2&\scriptstyle 3&\scriptstyle 4&
\scriptstyle 5&\scriptstyle 6&\scriptstyle 7&
\scriptstyle f_1\\
\hline
1&0&0&0&0&1&1 & 0 \\
0&1&0&0&1&1&0 & 1 \\
0&0&1&0&1&0&1 & 1 \\
0&0&0&1&1&1&1 & 0
\end{array}
\right]
\]

Note that the first column is a standard basis vector, and hence deleting the first row and column provides a matrix representation of the contracted matroid $M / 1$. In this case we obtain a simple rank $3$ matroid with $7$ elements. The only such matroid up to isomorphism is $F_7$. But then $M$ admits an $F_7$ minor, contradicting \Cref{lem:no f7}. This same argument applies if we assume that the matroid $M$ contains any $f_i$ for $i\in[4]$ by considering \(M/i\). On the other hand, if \(M\) contains any \(f_i\) for \(i = 5, 6, 7\), we can apply a change of basis to make \(i\) into the first basis vector and proceed with the argument above. It is easy to verify that in all three cases, the contraction of the submatroid produces \(F_7\).
\end{proof}

\begin{lemma}\label{lem:delete f7*}
    Let $M$ be a rank 4 matroid with $|E(M)|>7$, and suppose $M$ admits a cycle system $\mathcal{C}$ and contains $F_7^*$ as a minor.
    Then there exists an element $e\in E(M)$ such that $M\setminus e$ also admits a cycle system and contains $F_7^*$ as a minor.
\end{lemma}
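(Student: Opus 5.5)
The plan is to mirror the proof of \Cref{lem:delete f7}, using \Cref{prop:f7* simplification} to split into two structural cases.

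\textbf{Disconnected case.} If $M$ is disconnected, then, as at the start of the proof of \Cref{lem:delete u24}, the component carrying the $F_7^*$ minor has rank $4$. Every other component therefore has rank $0$ and consists of loops. Deleting such a loop $e$ is the same as contracting it, so $M\setminus e$ admits a cycle system by \Cref{lem:cycle system minor contraction} and still contains $F_7^*$.

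From now on assume $M$ is connected. Then $\mathcal{C}$ is a circuit system by \Cref{lem:circuitsys}, and $M$ is loopless. Also $\mathcal{C}_{[g]}$ is dependent, hence nonempty. For any $e\in\mathcal{C}_{[g]}$, \Cref{prop:del-con cs} gives a cycle system on $M\setminus e$. So it suffices to find some $e\in\mathcal{C}_{[g]}$ whose deletion preserves an $F_7^*$ minor.

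\textbf{Case (2) of \Cref{prop:f7* simplification}.} Suppose $M$ contains $AG(3,2)$. Since $M$ has rank $4$ and is binary, the proof of that proposition shows that $AG(3,2)$ appears as a restriction, on representatives of $8$ parallel classes. Pick any $e\in\mathcal{C}_{[g]}$.
\begin{itemize}
\item If the parallel class of $e$ meets this restriction, choose the representatives so that $e$ is the representative of its class.
\item Then use $AG(3,2)\setminus e\cong F_7^*$. This exhibits an $F_7^*$ restriction of $M\setminus e$.
\item If the class of $e$ does not meet the $AG(3,2)$ restriction, the restriction survives in $M\setminus e$ directly.
\end{itemize}
Either way this case is immediate.

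\textbf{Case (1): $\widetilde{M}\cong F_7^*$.} Label the classes by $[7]$ using the representation in \Cref{rem:vectors}. If some element of $\mathcal{C}_{[g]}$ lies outside the chosen copy $[7]$, delete it and we are done. So assume $\mathcal{C}_{[g]}\subseteq[7]$. Then $\mathcal{C}_{[g]}$ contains a circuit of $F_7^*$, which has size $4$. By \Cref{rem:Fano} we may take it to be $1246$.

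If any of $1,2,4,6$ has a parallel element in $M$, we can delete that element of $1246$ and replace it by its parallel partner in the $F_7^*$ copy, so we are done. Hence assume $1,2,4,6$ have no parallel elements. For each pair among $1,2,4,6$ there is a circuit of $F_7^*$, hence of $M$, containing one but not the other; for example $1347$ contains $1$ but not $2$. By \Cref{lem:unique circuit}, the members $C_1,C_2,C_4,C_6$ of $\mathcal{C}$ containing $1,2,4,6$ are therefore distinct. Each meets $1246$ in exactly one element. None has size $2$, so their images $\widetilde{C_i}$ are circuits of $F_7^*$ with $|\widetilde{C_i}\cap 1246|=1$.

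\textbf{Main obstacle.} The step that needs care is ruling out these intersections. In \Cref{lem:delete f7} this came from a cocircuit; here $1246$ is a circuit, not a cocircuit, so \Cref{lem:intersection} does not apply directly. Instead I will check from the explicit list of circuits of $F_7^*$, namely
\[1246,\ 1257,\ 1347,\ 1356,\ 2345,\ 2367,\ 4567,\]
that every circuit other than $1246$ meets $1246$ in exactly $2$ elements. Equivalently, these are the affine planes avoiding $g$ in $AG(3,2)$, and two such planes meet in $0$ or $2$ points. This contradicts $|\widetilde{C_i}\cap 1246|=1$, so the case $\mathcal{C}_{[g]}\subseteq[7]$ with no parallel elements cannot occur, and the lemma follows.
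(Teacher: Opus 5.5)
Your proof is correct and follows the paper's argument step for step: the loop deletion in the disconnected case, the $AG(3,2)$ case, and then the contradiction from the circuit $1246\subseteq\mathcal{C}_{[g]}$ with no parallel elements. One correction: $1246$ \emph{is} a cocircuit of $F_7^*$ (it is a circuit of $F_7$, the complement of the hyperplane $357$ of $F_7^*$), and this is exactly how the paper concludes via \Cref{lem:intersection}, so your check that every other circuit meets $1246$ in two elements verifies the same orthogonality fact by hand rather than working around a missing one.
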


\begin{proof}
If $M$ is disconnected, then there exists a loop $e\in E(M)$ and $M\setminus e$ admits a cycle system and contains $F_7^*$ as a minor (as in the beginning of the proof of \Cref{lem:delete u24}).

If \(M\) is connected, then \(\mathcal{C}\) is a circuit system. First note that if \(M\) contained \(AG(3,2)\) as a minor, then since any deletion of $AG(3,2)$ yields a matroid isomorphic to $F_7^*$, any \(e\in \mathcal{C}_{[g]}\) will satisfy the claim.
    
    Hence we assume that \(M\) does not contain \(AG(3,2)\) as a minor. By \Cref{prop:f7* simplification}, $\widetilde{M}$ must be isomorphic to $F_7^*$.
    We may assume that $\mathcal{C}_{[g]}$ is contained in the set $[7]$, as otherwise we can delete the element of $\mathcal{C}_{[g]}$ outside $[7]$ and preserve the existence of a cycle system and $F_7^*$ minor. We know there is a circuit contained in $\mathcal{C}_{[g]}$, which we may assume without loss of generality is $1246$. 

 For each $i = 1,2,4,6$ let $C_i \in \cir(M)$ denote the circuit from $\mathcal{C}$ that contains $i$. As argued in \Cref{lem:delete f7}, if any of 1246 had a parallel element in \(M\), we could delete and be immediately done. Thus, we assume that $|C_i| > 2$ for $i = 1,2,4,6$. Then $\widetilde{C_1},\widetilde{C_2},\widetilde{C_4},\widetilde{C_6}$ are circuits in \(F_7^*\) that each intersect $1246$ exactly once. However, since 1246 is a cocircuit of \(F_7^*\), this contradicts \Cref{lem:intersection}.
\end{proof}

\begin{lemma}\label{lem:no f7*}
    Let \(M\) be a matroid with a cycle system \(\mathcal{C}\). Then \(M\) cannot have the Fano dual matroid \(F_7^*\) as a minor.
\end{lemma}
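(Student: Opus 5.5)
We argue by contradiction, mirroring the proof of \Cref{lem:no f7}. Suppose $M$ admits a cycle system $\mathcal{C}$ and contains $F_7^*$ as a minor.

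\textbf{Contraction step.} By \Cref{prop:scum}, there is a set $Z \subseteq E(M)\setminus E(F_7^*)$ such that $M/Z$ has the same rank as $F_7^*$, namely rank $4$, and $F_7^*$ is a restriction of $M/Z$. By \Cref{lem:cycle system minor contraction}, $M/Z$ still admits a cycle system, and it contains $F_7^*$ as a minor.

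\textbf{Deletion step.} Set $M_0 := M/Z$. If $|E(M_0)|>7$, \Cref{lem:delete f7*} gives an element $e$ such that $M_0\setminus e$ admits a cycle system and contains $F_7^*$ as a minor. Deleting a non-coloop element preserves rank, so $M_0\setminus e$ still has rank $4$. This also holds for loops and for elements of $\mathcal{C}_{[g]}$, since the latter lie in a cycle and hence are not coloops. In any case, a rank-$3$ matroid cannot contain the rank-$4$ matroid $F_7^*$ as a restriction-compatible minor here. More precisely, every matroid in the sequence contains $F_7^*$ as a minor on $7$ elements, and its rank is at most $4$. Since the rank of a matroid is at least the rank of any minor obtained only by deletions, and we keep $F_7^*$ as a restriction, the rank stays exactly $4$. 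So the hypotheses of \Cref{lem:delete f7*} remain satisfied, and we may apply it repeatedly.

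\textbf{Conclusion.} Each application reduces the ground set by one element, so after finitely many steps we reach a rank-$4$ matroid on exactly $7$ elements that admits a cycle system and contains $F_7^*$ as a minor. A minor on the same number of elements is the matroid itself, so this matroid is isomorphic to $F_7^*$. That contradicts \Cref{prop:minors no cs}.

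The only delicate point is checking that the hypotheses of \Cref{lem:delete f7*}, rank exactly $4$ and an $F_7^*$ minor, persist under the iteration. The $F_7^*$ minor persists by the conclusion of that lemma. The rank persists because the $F_7^*$ minor forces rank at least $4$, while deletion never increases rank. Everything else is handled by the earlier results.
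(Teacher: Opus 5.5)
Your proof is correct and is essentially the paper's argument: contract via \Cref{prop:scum} and \Cref{lem:cycle system minor contraction} to a rank-$4$ matroid, apply \Cref{lem:delete f7*} repeatedly down to $F_7^*$ itself, and contradict \Cref{prop:minors no cs}. Your explicit check that the rank stays exactly $4$ fills in a detail the paper leaves implicit, and the one-line reason you give at the end (a matroid has rank at least that of any of its minors, and deletion never raises rank) is all that is needed, so the muddled middle sentences about ``restriction-compatible'' minors and minors ``obtained only by deletions'' can be dropped.
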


\begin{proof}
    Suppose $M$ contained $F_7^*$ as a minor. By \Cref{prop:scum}, there exists \(Z\subseteq M\) such that $M/Z$ is rank $4$, admits a cycle system, and contains $F_7^*$ as a minor. By repeated applications of \Cref{lem:delete f7*}, we then get that \(F_7^*\) admits a cycle system, which contradicts \Cref{prop:minors no cs}. Thus, $M$ cannot have $F_7^*$ as a minor.
\end{proof}

\begin{theorem}[Regularity of Matroids with Cycle Systems]\label{thm:cycle system regular}
    Let $M$ be a matroid with cycle system $\mathcal{C}$. Then $M$ is regular.
\end{theorem}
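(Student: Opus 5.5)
The plan is to combine the excluded-minor characterization of regular matroids, \Cref{prop:regular minors}, with the three minor-avoidance results already established. By \Cref{prop:regular minors}, $M$ is regular if and only if it has none of $U_{2,4}$, $F_7$, $F_7^*$ as a minor. We treat each excluded minor in turn.

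\begin{enumerate}
\item For $U_{2,4}$, \Cref{thm:cycle system binary} shows that $M$ is binary. Hence, by \Cref{prop:binary u24}, $M$ has no $U_{2,4}$ minor.
\item For $F_7$, \Cref{lem:no f7} directly rules out an $F_7$ minor for any matroid admitting a cycle system.
\item For $F_7^*$, \Cref{lem:no f7*} likewise rules out an $F_7^*$ minor.
\end{enumerate}

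Since $M$ avoids all three excluded minors, \Cref{prop:regular minors} gives that $M$ is regular.

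All the substantive work is already contained in these lemmas. Each uses the same scheme: contract via \Cref{prop:scum} and \Cref{lem:cycle system minor contraction} down to a minor of the right rank, then delete elements one at a time using \Cref{lem:delete u24}, \Cref{lem:delete f7} or \Cref{lem:delete f7*}, and finally reach a contradiction with \Cref{prop:minors no cs}.

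The only point to check in assembling them is a dependency: \Cref{lem:no f7} and \Cref{lem:no f7*} themselves rely on binarity, through \Cref{lem:f7 simple} and \Cref{prop:f7* simplification}. That binarity is supplied by \Cref{thm:cycle system binary}, which was proved independently, so the argument is not circular. No further obstacle is expected.
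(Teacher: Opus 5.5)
Your proposal is correct and is essentially the paper's proof: binarity from \Cref{thm:cycle system binary} excludes $U_{2,4}$, \Cref{lem:no f7} and \Cref{lem:no f7*} exclude $F_7$ and $F_7^*$, and \Cref{prop:regular minors} gives regularity. Your remark that the Fano-minor lemmas use binarity (via \Cref{lem:f7 simple} and \Cref{prop:f7* simplification}) but do not create a circular argument is also accurate.
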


\begin{proof}
    From \Cref{thm:cycle system binary}, we know that $M$ is binary. Using \Cref{lem:no f7} and \Cref{lem:no f7*} we deduce that $M$ contains no $F_7$ or $F_7^*$ minor. The excluded-minor characterization in \Cref{prop:regular minors} therefore implies that \(M\) is regular.
\end{proof}

\section{Unique Union and Circuit Representation}\label{sec:uu and symdiff}

Recall from \Cref{prop:anton} that if $M$ is a matroid with cycle system $\mathcal{C}$, any circuit $D \in \cir(M)$ has a representation as a unique union $D = \mathcal{C}_T$, for some $T\subseteq [g]$.
In this section, we explore the question of whether this representation is unique. The immediate answer is no: consider the (disconnected) graphic matroid and cycle system depicted in \Cref{fig:planar_graphs}.

\begin{figure}[htbp]
  \centering
  \begin{tikzpicture}[line cap=round, line join=round, line width=0.8pt, scale=1.2]

    \draw (0,0) -- (0,1) -- (1,1) -- cycle;
    \draw (1,1) -- (1,2) -- (2,2) -- (2,1) -- cycle;
    \draw (1,1) -- (2,2);
    \node at (1, -0.4) {\Large $G$};

    \draw (3.2,0) -- (3.2,1) -- (4.2,1) -- cycle;
    \node at (3.7, -0.4) {\Large $C_1$};

    \draw (5.2,0) -- (5.2,1) -- (6.2,1) -- cycle;
    \draw (6.2,1) -- (6.2,2) -- (7.2,2) -- cycle;
    \node at (6.2, -0.4) {\Large $C_2$};

    \draw (8.2,0) -- (8.2,1) -- (9.2,1) -- cycle;
    \draw (9.2,1) -- (10.2,1) -- (10.2,2) -- cycle;
    \node at (9.2, -0.4) {\Large $C_3$};

  \end{tikzpicture}
  
  \caption{A planar graph $G$ whose corresponding (disconnected) matroid $M(G)$ has cycle system $\{C_1, C_2, C_3\}$. In this cycle system, the upper square can be represented by both $*\{C_1, C_2, C_3\}$ and $*\{C_2, C_3\}$.}
  \label{fig:planar_graphs}
\end{figure}

The question of whether the representation is unique when $M$ is assumed to be \emph{connected} is still open, although we conjecture that this is the case in \Cref{conj:unique}. In this section, we seek to understand the collection of subsets $S\subseteq [g]$ with $\mathcal{C}_S = D$, for some fixed circuit $D$. We then discuss some conditions on $\mathcal{C}$ that guarantee uniqueness of circuit representation using unique union. 
We first define a few terms. 

\begin{definition}[*-deletion]
    Let $M$ be a matroid with cycle system $\mathcal{C} = \{C_1,\dots,C_g\}$. If there exists $e\in \mathcal{C}_{[g]}$ with $e\in C_i$, then we call the cycle system $\mathcal{C} \setminus  C_i$ on $M\setminus e$ a $*$-deletion for $\mathcal{C}$. 
\end{definition}

The name ``$*$-deletion'' is meant to indicate that the deletion depends on the total unique union. Note that by \Cref{prop:del-con cs}, any $*$-deletion $\mathcal{C}\setminus C_i$ is a cycle system for the matroid $M\setminus e$, allowing us to iterate this process.

\begin{definition}[Reachable]
    Let $M$ be a matroid with cycle system $\mathcal{C}$. We say that a cycle system $\mathcal{C}'$ is \emph{reachable} from $\mathcal{C}$ if we may obtain $\mathcal{C}'$ from $\mathcal{C}$ via a finite sequence of $*$-deletions, so that  
    \[\mathcal{C} = \mathcal{C}^{(0)}\supset \mathcal{C}^{(1)}\supset \mathcal{C}^{(2)}\supset \mathcal{C}^{(3)}\supset \dots\supset \mathcal{C}^{(m)} = \mathcal{C}'\]
    for some \(m\geq 0\) where \(\mathcal{C}^{(i + 1)}\) is obtained from \(\mathcal{C}^{(i)}\) by \(\ast\)-deletion for each \(0\leq i < m\).
\end{definition}

Observe that if $\mc C'$ is reachable from $\mc C$ via the sequence of $*$-deletions $e_1,\dots,e_m$, then $\mc C'$ is a cycle system on $M\setminus \{e_1,\dots,e_m\}$ (and not a cycle system on $M$ itself). For convenience, we will use both notations $\mc C_S$ and $*\{C_i:i\in S\}$ interchangeably.

Given a cycle system ${\mathcal C}$ and circuit \(D\), we seek to understand the subsets \(S\subseteq [g]\) such that \(D = \mathcal{C}_S\). We use \(\mathcal{S}_D\) to denote this collection of subsets, so that
\[{\mathcal S}_D = \{S \subseteq [g]: {\mathcal C}_S = D\}.\]

We then have the following observation.


\begin{lemma}\label{lem:S_D unions}
    Let $M$ be a matroid with cycle system $\mathcal{C}$. For any circuit $D$, the set ${\mathcal S}_D$ is closed under taking unions.
\end{lemma}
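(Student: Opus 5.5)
The plan is to show that $\mathcal{C}_{S\cup T}$ is a dependent subset of $D$. Since $D$ is a circuit, this forces $\mathcal{C}_{S\cup T}=D$.

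Fix $S,T\in\mathcal{S}_D$. For $U\subseteq[g]$ and $e\in E$, write $n_U(e)$ for the number of indices $i\in U$ with $e\in C_i$. By definition $e\in\mathcal{C}_U$ if and only if $n_U(e)=1$. The hypothesis says that $n_S(e)=1 \iff e\in D \iff n_T(e)=1$.

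First I note that $S$ and $T$ are nonempty, since $\mathcal{C}_\emptyset=\emptyset$ is not a circuit. So $S\cup T$ is a nonempty subset of $[g]$. The defining property of a cycle system then gives that $\mathcal{C}_{S\cup T}$ is dependent.

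Next I show the inclusion $\mathcal{C}_{S\cup T}\subseteq D$. Let $e\in\mathcal{C}_{S\cup T}$, so $n_{S\cup T}(e)=1$, and let $i$ be the unique index in $S\cup T$ with $e\in C_i$. By symmetry assume $i\in S$. Since $S\subseteq S\cup T$, we have $n_S(e)\le n_{S\cup T}(e)=1$. Also $n_S(e)\ge 1$ because $i\in S$. Hence $n_S(e)=1$, and $e\in\mathcal{C}_S=D$.

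Thus $\mathcal{C}_{S\cup T}$ is a dependent subset of the circuit $D$. By minimality of circuits it equals $D$, so $S\cup T\in\mathcal{S}_D$. There is no real obstacle here; the only care needed is the monotonicity $n_S\le n_{S\cup T}$ and ensuring $S\cup T\neq\emptyset$ so that the cycle system axiom applies.
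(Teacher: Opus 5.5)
Your proposal is correct and follows the same approach as the paper: show $\mathcal{C}_{S\cup T}\subseteq \mathcal{C}_S\cup\mathcal{C}_T = D$, use nonemptiness of $S\cup T$ to get dependence from the cycle system axiom, and conclude equality by minimality of the circuit $D$. Your counting argument with $n_U(e)$ simply spells out the inclusion that the paper states in one line.
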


\begin{proof}
    If $A,B\in \mathcal{S}_D$, then $\mathcal{C}_{A\cup B}\subseteq \mathcal{C}_A\cup \mathcal{C}_B = D$. Since $A\cup B$ is nonempty, $\mathcal{C}_{A\cup B}$ must be dependent. As $D$ is a circuit, we can conclude that $\mathcal{C}_{A\cup B} = D$.
\end{proof}

From \Cref{lem:S_D unions} we see that there exists a unique inclusion-wise maximal element in ${\mathcal S}_D$, which we will denote by $S_D^{\max}$.

\begin{lemma}\label{lem:reachable to D}
    Let $M$ be a matroid with cycle system $\mathcal{C} = \{C_1,\dots, C_g\}$, and let $D$ be a circuit of $M$. Then there exists a unique cycle system $\mathcal{C}^D$ reachable from $\mathcal{C}$ such that $D = *\{C: C \in \mathcal{C}^D\}$. Moreover, we have $\CC^D = \{C_i : i \in S_D^{\text{max}}\}$.
\end{lemma}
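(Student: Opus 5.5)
The plan is to translate everything into statements about index sets. A cycle system reachable from $\mathcal{C}$ is always of the form $\{C_i : i\in T\}$ for some $T\subseteq[g]$, and its total unique union is exactly $\mathcal{C}_T$. A $*$-deletion from the system indexed by $T'$ removes an index $j\in T'$ for which there is an element $e\in\mathcal{C}_{T'}\cap C_j$; that is, $e$ lies in $C_j$ and in no other $C_i$ with $i\in T'$. Hence a reachable system whose total unique union is $D$ is indexed by some $T\in\mathcal{S}_D$, and in particular $T\subseteq S_D^{\max}$ by \Cref{lem:S_D unions}. The statement therefore splits into two claims: existence (the index set $S_D^{\max}$ is reachable) and uniqueness (no reachable $T\in\mathcal{S}_D$ is strictly smaller than $S_D^{\max}$).

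\emph{Existence.} We start from $T=[g]$ and show that whenever $S_D^{\max}\subsetneq T$, there is a legal $*$-deletion removing some $j\in T\setminus S_D^{\max}$. Suppose instead that every element of $\mathcal{C}_T$ lies, uniquely among the cycles indexed by $T$, in some $C_i$ with $i\in S_D^{\max}$. Since $S_D^{\max}\subseteq T$, such an element also appears uniquely among the cycles indexed by $S_D^{\max}$. Hence $\mathcal{C}_T\subseteq \mathcal{C}_{S_D^{\max}}=D$. Now $\mathcal{C}_T$ is dependent, because it is the total unique union of a cycle system by \Cref{prop:del-con cs}, and $D$ is a circuit, so $\mathcal{C}_T=D$. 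This gives $T\in\mathcal{S}_D$, and therefore $T\subseteq S_D^{\max}$, a contradiction. So the required deletion exists, and iterating it terminates exactly at $T=S_D^{\max}$.

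\emph{Uniqueness.} Let $[g]=T_0\supset T_1\supset\cdots\supset T_m=T$ be a chain of $*$-deletions with $\mathcal{C}_T=D$. We show by induction that $S_D^{\max}\subseteq T_\ell$ for every $\ell$. Suppose $S_D^{\max}\subseteq T_\ell$ and that the step from $T_\ell$ to $T_{\ell+1}$ deletes some $j\in S_D^{\max}$ via an element $e$. Then $e$ lies in $C_j$ and in no other $C_i$ with $i\in T_\ell$. Since $S_D^{\max}\subseteq T_\ell$, the element $e$ lies uniquely in the cycles indexed by $S_D^{\max}$, so $e\in D=\mathcal{C}_T$. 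Thus $e\in C_i$ for some $i\in T\subseteq T_\ell\setminus\{j\}$, which contradicts the uniqueness of $e$ in $T_\ell$. (Equivalently, $e$ has been deleted from the ground set, so it cannot lie in the final total unique union $D$.) Hence $S_D^{\max}\subseteq T$. Combined with $T\subseteq S_D^{\max}$, this gives $T=S_D^{\max}$, so $\mathcal{C}^D=\{C_i : i\in S_D^{\max}\}$.

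The main point needing care is the bookkeeping in the existence step. The contradiction comes from showing $\mathcal{C}_T\subseteq D$ and then using minimality of the circuit $D$; this relies on $S_D^{\max}\subseteq T$ being maintained at every stage, which is exactly why we only ever delete indices outside $S_D^{\max}$.
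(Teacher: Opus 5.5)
Your proof is correct and takes essentially the same approach as the paper. Uniqueness is the same induction maintaining $S_D^{\max}\subseteq T_\ell$, and your parenthetical remark that a deleted element cannot lie in $D$ is exactly the paper's reasoning. The only difference is in existence: you show directly that a $*$-deletable index outside $S_D^{\max}$ exists whenever $T\supsetneq S_D^{\max}$. The paper instead deletes any element of the current total unique union lying outside $D$ (one exists by minimality of $D$), and identifies the endpoint with $S_D^{\max}$ only through the uniqueness induction.
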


\begin{proof}
First note that if $D = {\mathcal C}_{[g]} = *\{C_i: i\in[g]\}$ then the result follows. Otherwise, we have that ${\mathcal C}_{[g]}$ cannot be properly contained in $D$, since ${\mathcal C}_{[g]}$ is dependent and $D$ is a circuit. Hence we can assume that there exists an element $e \in {\mathcal C}_{[g]}$ with $e \notin D$. Without loss of generality assume $e \in C_g$.

We first prove existence by iterative \(*\)-deletion. We then prove uniqueness by showing that every such deletion sequence retains all cycles indexed by \(\mathcal S_D^{\max}\).
To define $\mathcal{C}^D$, we start our $*$-deletion by deleting the element $e$, and obtaining a cycle system $\mathcal{C} \setminus C_g$ for the matroid $M \setminus e$. Note that $D$ is still a circuit of $M \setminus e$. Hence we can repeat this argument, each time reducing the number of cycles in our cycle system. Since $D$ is always a circuit of the resulting matroid, we have that the cycle system will never be empty. Hence eventually we obtain a cycle system $\mathcal{C}^D$ whose unique union is $D$.

Next we show uniqueness. To ease notation let $\CC^{\text{max}} := \{C_i : i \in S_D^{max}\}$. Suppose $\mathcal{C'}$ is a cycle system that is reachable from $\mathcal{C}$, such that the unique union of all its cycles is equal to $D$. Note that when we delete any element $e$ during this process, we must have $e \not \in D$. As before, let us write
    \[\mathcal{C} = \mathcal{C}^{(0)}\supset \mathcal{C}^{(1)}\supset\dots\supset \mathcal{C}^{(m)} = \mathcal{C}'\]
to denote the sequence of $*$-deletions we perform to obtain $\mathcal{C}'$ from $\mathcal{C}$. We claim that $\CC^{\text{max}} \subseteq \mathcal{C}^{(k)}$ for all $0 \leq k \leq m$. We proceed by induction, noting that the claim is trivial when $k=0$, as $\mathcal{C} = \mathcal{C}^{(0)}$. 

For the inductive step we assume $\CC^{\text{max}} \subseteq \CCk{k-1}$. When we go from $\CCk{k-1}$ to $\CCk{k}$, by definition we delete some element $e$ from the unique union of $\CCk{k-1}$, meaning $e$ appears in exactly one cycle of $\CCk{k-1}$. Note that any element in \(\left(\bigcup_{i\in S_D^{\max}}C_i\right)\setminus D\) has to appear in at least two of the cycles in \(\mathcal{C}^{\text{max}}\), since it does not appear in the unique union (which we assume is equal to \(D\)). However, since by induction we have $\mathcal{C}^{\text{max}} \subseteq \CCk{k-1}$, \(e\) can appear at most once in \(\mathcal{C}^{\text{max}}\), so \(e\not\in \left(\bigcup_{i\in S_D^{\max}}C_i\right)\setminus D\). Combining this with \(e\not\in D\), we get that
\(e\not\in \bigcup_{i\in S_D^{\max}} C_i\). This implies that no element of \(\mathcal{C}^{\text{max}}\) was deleted when going from \(\mathcal{C}^{(k - 1)}\) to \(\mathcal{C}^{(k)}\). Hence we get \(\mathcal{C}^{\text{max}}\subseteq \mathcal{C}^{(k)}\), finishing the induction proof of the claim.

Finally, as the unique union of all cycles in $\CC'$ is $D$ and $\CC^{\text{max}} \subseteq \CC'$, we get that $\{i : C_i \in \CC'\} = S_D^{\max}$ by the maximality of \(S_D^{\max}\). This completes the proof.
\end{proof}

We see that $S_D^{\max}$ is an inclusion-wise upper bound for the collection $\mathcal{S}_D$. The next few results introduce a set $S_D^{\min}$ as an analogous lower bound. 

\begin{proposition}\label{prop:sandwich S_D}
    Let $M$ be a matroid with cycle system $\mathcal{C} = \{C_1,\dots,C_g\}$, and let $D$ be a circuit in $M$.  Let $S_D^{\min}$ be the subset of $S_D^{\max}$ defined as:
    \[S_D^{\min} := \{i\in  S_D^{\max}\mid D\cap C_i\neq\varnothing\}.\]
    Then for all $S\in \mathcal{S}_D$,
    \[S_D^{\min}\subseteq S\subseteq S_D^{\max}.\]
\end{proposition}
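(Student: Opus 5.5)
The upper bound $S\subseteq S_D^{\max}$ is immediate from \Cref{lem:S_D unions}. Since $\mathcal{S}_D$ is closed under unions and finite, the union of all its members is itself in $\mathcal{S}_D$. That union is by definition the unique maximal element $S_D^{\max}$, so every $S\in\mathcal{S}_D$ is contained in it. The substance of the statement is therefore the lower bound $S_D^{\min}\subseteq S$.

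For the lower bound, fix $S\in\mathcal{S}_D$ and suppose, for contradiction, that some $i\in S_D^{\min}$ lies outside $S$. Then $i\in S_D^{\max}\setminus S$ and $C_i\cap D\neq\varnothing$, so we may pick $x\in C_i\cap D$. Since $x\in D=\mathcal{C}_S$, there is exactly one $j\in S$ with $x\in C_j$, and $j\neq i$ because $i\notin S$.

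Now consider $S\cup\{i\}$. It lies in $S_D^{\max}$ because $S\subseteq S_D^{\max}$ and $i\in S_D^{\max}$. In the family indexed by $S\cup\{i\}$, the element $x$ appears in both $C_i$ and $C_j$, so $x\notin\mathcal{C}_{S\cup\{i\}}$. The plan is to show $\mathcal{C}_{S\cup\{i\}}\subseteq D$. Since $x\in D$ is excluded, this inclusion is then proper, and a proper subset of the circuit $D$ is independent. But $S\cup\{i\}$ is nonempty, so the cycle system axiom says $\mathcal{C}_{S\cup\{i\}}$ is dependent, a contradiction.

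The key step, and the only real obstacle, is the containment $\mathcal{C}_{T}\subseteq D$ for every nonempty $T\subseteq S_D^{\max}$. To prove it, take $y\in\mathcal{C}_T$, so $y$ lies in exactly one $C_k$ with $k\in T$. There are two cases.

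\begin{itemize}
\item If $y\in D=\mathcal{C}_{S_D^{\max}}$, we are done.
\item If $y\notin D$, then $y$ lies in $C_k$ with $k\in S_D^{\max}$ but not in the unique union over $S_D^{\max}$. So $y$ must lie in at least two of the cycles indexed by $S_D^{\max}$.
\end{itemize}

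The second case alone does not contradict $y$ appearing once within $T$, so this naive route needs more care. I would try a different subset in place of $S\cup\{i\}$. Consider $T=S_D^{\max}\setminus\{i\}$, which contains $S$ and also contains $j$. Argue as follows.

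\begin{itemize}
\item The element $x\in C_i\cap D$ appears exactly once among the cycles indexed by $S_D^{\max}$, namely in $C_i$.
\item Hence $x$ lies in no $C_j$ for $j\in S_D^{\max}\setminus\{i\}$.
\item But $S\subseteq S_D^{\max}\setminus\{i\}$ and $x\in\mathcal{C}_S$, so $x$ lies in some $C_j$ with $j\in S$.
\end{itemize}

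This is a direct contradiction, which proves the lower bound cleanly. So the final argument uses only the definition of $\mathcal{C}_{S_D^{\max}}=D$ together with $S\subseteq S_D^{\max}$, and does not need the containment claim at all.
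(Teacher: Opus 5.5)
Your final argument is correct and is essentially the paper's proof. Both derive the contradiction from an element $x\in C_i\cap D$ that also lies in $C_j$ for some $j\in S$, so that two cycles indexed by $S_D^{\max}$ contain $x$ even though $x\in D=\mathcal{C}_{S_D^{\max}}$; the paper phrases this via $S_D^{\max}\cup S\in\mathcal{S}_D$, which equals $S_D^{\max}$. In a write-up, drop the exploratory detour through $S\cup\{i\}$: the containment $\mathcal{C}_T\subseteq D$ for all nonempty $T\subseteq S_D^{\max}$ is false in general, and your final argument does not need it.
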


\begin{proof}
    We only need to show $S_D^{\min}\subseteq S$ for $S\in \mathcal{S}_D$. Assume for the sake of contradiction that there is some $S\in\mathcal{S_D}$ such that $S_D^{\min}\not\subseteq S$. Then there exists some $i\in S_D^{\min} \setminus S$ with some $e\in C_i\cap D$. As we have $e \in  D = \mathcal{C}_{S}$, there is some $j\in S$ such that $e\in C_j$ (hence $i\neq j$).
    
    From \Cref{lem:S_D unions}, we have $S_D^{\max}\cup S\in \mathcal{S}_D$. This set contains both $i$ and $j$ because $S_D^{\min}\subset S_D^{\max}$. Both $C_i$ and $C_j$ contain $e$, which tells us that $e$ is not contained in the unique union of $\mathcal{C}_{S_D^{\max}\cup S}$, which is equal to $D$, giving us a contradiction.
\end{proof}

Note that $S_D^{\max}$ and $S_D^{\min}$ are subsets of $[g]$. Although it is true that $S_D^{\max}\in \mathcal{S}_D$, it is not necessarily true that $S_D^{\min}\in \mathcal{S}_D$. Note that if $S_D^{\min} = S_D^{\max}$, then $|\mathcal{S}_D| = 1$ and there exists a unique way to represent $D$ as a unique union. Moreover, for a circuit $D$, if every cycle in a cycle system intersects $D$ (which implies $S_D^{\min} = S_D^{\max}$), then $D$ has a unique representation using unique union.

The remainder of this section will provide another approach to proving $|\mc S_D| = 1$ using some of the same ideas. We will use the following definition in the theorem which follows.

\begin{definition}
    Let $\mc C$ be a cycle system on matroid $M$. We let $\mc C_{\ge 3}$ denote the set of elements of $E(M)$ which appear in 3 or more cycles in $\mc C$.
\end{definition}

We can now state a useful sufficient condition for $|\mc{S}_D|=1$.

\begin{proposition}\label{prop:C>=3}
    Let $M$ be a matroid with cycle system $\mathcal{C}$. Let $D\in \cir(M)$. If $\mathcal{C}^D_{\ge 3}$ is independent, then $|\mathcal{S}_D| = 1$.
\end{proposition}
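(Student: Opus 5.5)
The plan is to argue by contradiction, using the sandwich from \Cref{prop:sandwich S_D} together with the description $\CC^D = \{C_i : i \in S_D^{\max}\}$ from \Cref{lem:reachable to D}. Suppose $|\mathcal{S}_D| \geq 2$. Since $S_D^{\max} \in \mathcal{S}_D$ is the unique maximal element, there is some $S \in \mathcal{S}_D$ with $S \subsetneq S_D^{\max}$. We may take $S \ne S_D^{\max}$ because $\mathcal{S}_D$ has another element, and by \Cref{prop:sandwich S_D} every element of $\mathcal{S}_D$ lies inside $S_D^{\max}$.

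Set $T := S_D^{\max} \setminus S$, which is nonempty. Also $S_D^{\min} \subseteq S$ by \Cref{prop:sandwich S_D}. Hence every index $i \in T$ lies outside $S_D^{\min}$, which by definition means $C_i \cap D = \varnothing$. Because $T \neq \varnothing$, the cycle system property gives that $\mathcal{C}_T$ is dependent. The key claim is then
\[\mathcal{C}_T \subseteq \mathcal{C}^D_{\ge 3},\]
which contradicts the hypothesis that $\mathcal{C}^D_{\ge 3}$ is independent.

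To verify the claim, I will take $x \in \mathcal{C}_T$ and count how many cycles of $\CC^D$ contain it.
\begin{enumerate}
\item Since $x$ lies in some $C_i$ with $i \in T$, and these cycles avoid $D$, we get $x \notin D$.
\item By definition of $\mathcal{C}_T$, $x$ lies in exactly one cycle indexed by $T$.
\item The index set $S$ satisfies $\mathcal{C}_S = D$, and $x \notin D$. So $x$ lies in either zero or at least two cycles indexed by $S$.
\item The full index set $S_D^{\max}$ also has unique union $D$, and $x$ lies in at least one of its cycles but not in $D$. So $x$ lies in at least two cycles indexed by $S_D^{\max} = S \sqcup T$.
\item Only one of those cycles is indexed by $T$, so $x$ appears at least once among the cycles indexed by $S$. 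By step 3 it then appears at least twice there.
\end{enumerate}
Altogether $x$ appears in at least $1 + 2 = 3$ cycles of $\CC^D$, so $x \in \mathcal{C}^D_{\ge 3}$.

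The only delicate point is the bookkeeping in steps 3 and 5. One must apply the unique-union condition to both $S$ and $S_D^{\max}$, and use the disjointness of the $T$-cycles from $D$, which is where \Cref{prop:sandwich S_D} is essential. With those in hand, the argument is a short counting contradiction and should not require the binary or regular structure proved earlier.
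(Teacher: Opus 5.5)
Your proof is correct and follows essentially the same route as the paper. The paper also takes $R\in\mathcal{S}_D$ with $R\neq S_D^{\max}$, uses $S_D^{\min}\subseteq R$ to see that the cycles indexed by $S_D^{\max}\setminus R$ avoid $D$, and compares occurrence counts against $\mathcal{C}_{S_D^{\max}}=\mathcal{C}_R=D$ to place $\mathcal{C}_{S_D^{\max}\setminus R}$ inside $\mathcal{C}^D_{\ge 3}$. Your version additionally spells out a step the paper leaves implicit: $\mathcal{C}_T$ is dependent because $T$ is nonempty.
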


\begin{proof}
    We prove the contrapositive. Suppose $|\mathcal{S}_D| > 1$ for some circuit $D$, so that $D$ has more than one representation via unique union. In particular we have some $R \in \mathcal{S}_D$ where $R \neq S_D^{\max}$.
    
    For any $i\in S_D^{\max}\setminus R$, since $S_D^{\min}\subseteq R$, we have that $C_i$ is disjoint from $D$. Now take any $e$ in the unique union $\mathcal{C}_{S_D^{\max}\setminus R}$ and let $i\in S_D^{\max}\setminus R$ be the unique index $i$ such that $e\in C_i$. If $e$ appeared once or twice among cycles of $S_D^{\max}$, then $e$ appears in either $\mathcal{C}_{S_D^{\max}}$ or $\mathcal{C}_R$, respectively. But these sets are both equal to $D$, which gives a contradiction because $D$ is disjoint from $C_i$. Recall from \Cref{lem:reachable to D} that the cycles in $\mc{C}^D$ are precisely the cycles of $\mc C$ with indices in $S_D^{\max}$. Thus, every $e$ in the unique union $\mathcal{C}_{S_D^{\max}\setminus R}$ must appear in $\mathcal{C}^D_{\geq 3}$, and thus $\mathcal{C}^D_{\geq 3}$ is dependent. 
\end{proof}

It is natural to ask whether $\mathcal{C}_{\ge 3}$ must always be independent. The answer is no in general, as can be seen from the following example.

\begin{example}\label{ex:C>=3}
    Let $M=U_{1,6}$ with ground set $[6]$. Consider the circuits $C_1 = 12, C_2 = 13, C_3 = 14, C_4 = 25,C_5 = 26$. One can check that these form a circuit system, yet we have    $\mathcal{C}_{\ge 3} = 12$, which is dependent. 
\end{example}

However, \Cref{prop:C>=3} is interested only in the cycle systems $\mc {C}^D$, which have the property that their total unique union $*\{C:C\in \mc C^D\}$ is a circuit (namely, $D$). The total unique union in \Cref{ex:C>=3} is $\{3,4,5,6\}$, which is a disjoint union of two circuits. Thus, the cycle system given by \Cref{ex:C>=3} could never appear as our reachable cycle system $\mc C^D$, so this is not a barrier to using \Cref{prop:C>=3} as a way to show uniqueness of unique union representation. This proposition is very helpful for checking that $|\mc S_D| = 1$ for specific examples, yet we would like to use it to show $|\mc S_D| = 1$ for all circuits $D$ and matroids $M$. We give the following sufficient condition to have $|\mc S_D| = 1$ over all $D$ and $M$.

\begin{theorem}\label{thm:sufficient unique}
    Suppose that for every matroid $M$ and every circuit system $\mc C$ on $M$ whose total unique union $\mc C_{[g]}$ is a circuit, we have that $\mc C_{\ge 3}$ is independent. Then $|\mc S_D| = 1$ for all matroids $M$, cycle systems $\mc C$ on $M$, and $D\in \cir(M)$.
\end{theorem}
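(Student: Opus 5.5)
The plan is to reduce directly to \Cref{prop:C>=3}. Fix a matroid $M$, a cycle system $\mc C$ and a circuit $D$. By \Cref{lem:reachable to D} there is a reachable cycle system $\mc C^D=\{C_i : i\in S_D^{\max}\}$ on a deletion $M'=M\setminus\{e_1,\dots,e_m\}$ whose total unique union is exactly $D$, which is still a circuit of $M'$. Since independence of a subset of $E(M')$ is the same in $M'$ and in $M$, it suffices to show that $\mc C^D_{\ge 3}$ is independent in $M'$. \Cref{prop:C>=3} then gives $|\mc S_D|=1$. The hypothesis of the theorem almost says this, except that it concerns circuit systems, while $\mc C^D$ is only a cycle system and $M'$ need not be connected.

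To bridge this I would argue by contrapositive, in the same way as the proof of \Cref{prop:C>=3}. Suppose $R\in\mc S_D$ with $R\neq S_D^{\max}$, and set $T=S_D^{\max}\setminus R$. That proof shows that every $C_i$ with $i\in T$ is disjoint from $D$, and that every element of the unique union $\mc C_T$ lies in at least three cycles of $\mc C^D$. Let $K$ be the connected component of $M'$ containing $D$. I would then pass to $M'|K$, realized as a contraction of $M'$: first contract a basis $B$ of $E(M')\setminus K$, which uses only non-loops and so preserves the cycle system by \Cref{prop:del-con cs}(2). The remaining elements outside $K$ are then loops, and I would handle them as in \Cref{lem:cycle system minor contraction} and \Cref{thm:components}.

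The goal of this step is a circuit system $\mc C''$ on the connected matroid $M'|K$ satisfying three things. First, its total unique union is still $D$, which holds because $D\subseteq K$ and contraction only intersects the cycles with the surviving ground set. Second, it retains indices corresponding to $T$ together with a nonempty index set whose unique union is $D$. Third, every element of $\mc C''_{T}$ lies in at least three of its members. \Cref{lem:circuitsys} makes $\mc C''$ a circuit system, since $M'|K$ is connected. The hypothesis then says $\mc C''_{\ge 3}$ is independent. On the other hand, $\mc C''_T$ is a nonempty unique union and hence dependent, and it is contained in $\mc C''_{\ge 3}$. This is a contradiction.

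The main obstacle is the component reduction. Contracting loops lowers the corank and can remove cycles from the system. Moreover, \Cref{thm:components} only says that the cycles attached to $K$ have the form $C\cup K_C$ with $K_C$ supported on earlier components, so I must check that the indices in $T$ are not discarded and that the at-least-three multiplicity is preserved on $K$. I would first try to show that cycles of $\mc C^D$ that meet $K$ trivially contribute nothing to $\mc C_T\cap K$. If needed, I would instead order the components so that $K$ comes last in \Cref{thm:components}; then all cycles whose core lies in $K$ survive as circuits after intersecting with $K$, and multiplicities of elements of $K$ are unchanged.
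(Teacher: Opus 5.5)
You follow the same route as the paper: pass to $\mc C^D$ and apply \Cref{prop:C>=3}. You are also right that the hypothesis speaks only of circuit systems, while $\mc C^D$ is merely a cycle system on a possibly disconnected $M'$. However, your bridge fails at the requirement that the system $\mc C''$ on the component $K\supseteq D$ still ``retains indices corresponding to $T$''. Nothing forces the cycles indexed by $T$, or the elements of $\mc C_T$, to meet $K$ at all.

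The paper's own example in \Cref{fig:planar_graphs} shows this. Write $\Delta$ for the triangle and $U,L$ for the two triangles of the $K_4\setminus e$ block. Then $C_1=\Delta$, $C_2=\Delta\cup U$, $C_3=\Delta\cup L$, and $D=\mc C_{[3]}=\mc C_{\{2,3\}}$ is the square. So $\mc C^D=\mc C$, $R=\{2,3\}$, $T=\{1\}$, and $\mc C_T=\Delta=\mc C^D_{\ge 3}$ lies entirely outside $K$. Contracting two edges of $\Delta$ and removing the resulting loop leaves only the circuit system $\{U,L\}$ on $K$. The same happens with \Cref{thm:components}, where here $K$ is forced to be last, so your fallback ordering changes nothing. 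The $\ge 3$ part of $\{U,L\}$ is empty, so no contradiction is available.

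This is not a fixable technicality. In that example $|\mc S_D|=2$, so the conclusion read literally for arbitrary cycle systems is false. The statement as worded can therefore hold only if its own hypothesis fails, and your reduction had to break somewhere; it breaks exactly at the component step.

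What is intended is the conclusion for circuit systems $\mc C$; this is what the introduction advertises as \Cref{conj:C>=3 ind} implying \Cref{conj:unique}. In that setting your obstacle disappears without any connectivity argument:
\begin{itemize}
\item $\mc C^D=\{C_i : i\in S_D^{\max}\}\subseteq\mc C$ consists of circuits of $M$ contained in $E(M')$.
\item These are therefore circuits of $M'=M\setminus\{e_1,\dots,e_m\}$, so $\mc C^D$ is a circuit system on $M'$ whose total unique union is the circuit $D$.
\item The hypothesis applies to $\mc C^D$ verbatim, and \Cref{prop:C>=3} then gives $|\mc S_D|=1$.
\end{itemize}
This is what the paper's one-line proof (``the hypothesis covers all possible cycle systems which could arise as $\mc C^D$'') tacitly relies on.
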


\begin{proof}
    Observe that the hypothesis covers all possible cycle systems which could arise as $\mc C^D$. Thus, $\mc{C}^D_{\ge 3}$ is independent for all $M$, $\mc C$, and $D$, and the conclusion follows from \Cref{prop:C>=3}.
\end{proof}

In other words, \Cref{conj:C>=3 ind} implies \Cref{conj:unique}.

\section{Further questions}\label{sec:Further}

We end with some open questions and ideas for further research. Perhaps the most natural question is whether one can fully characterize $\ms{C}$. Note that $M(K_6)$ admits a cycle system, while $M(K_{3,3})$, one of its minors, does not. This implies that there cannot exist an excluded-minor classification of matroids with cycle systems, as $\ms{C}$ is not closed under deletion.

However, there is an alternative well-known classification of regular matroids due to Seymour \cite{Seymour}: a matroid is regular if and only if it can be decomposed using 1-, 2-, and 3-sums of graphic and cographic matroids, as well as an exceptional matroid $R_{10}$. $R_{10}$ can be represented over $\mathbb{F}_2$, where the ground set $E(R_{10})$ has 10 elements corresponding to the vectors in $\mathbb{F}_2^5$ with exactly 3 ones. We have computationally verified that $R_{10}\notin \ms{C}$. Thus, it is natural to ask the following question.

\begin{question}
    Is it true that no matroid $M\in \ms{C}$ can have $R_{10}$ as a minor?
\end{question}

Observe that $M(K_5)\in \ms{C}$, so the direct sum $M(K_5)\oplus M(K_5)^*\in \ms{C}$ is neither graphic nor cographic, yet it admits a cycle system. This demonstrates that $\ms{C}\not\subset \ms{G}\cup \ms{G}^*$. Recall that $M(K_{3,3})$ is graphic but not in $\ms{C}$, so the sets $\ms{C}$ and $\ms{G}\cup \ms{G}^*$ are incomparable. However, $M(K_5)\oplus M(K_5)^*$ is not connected. To find a connected matroid in $\ms{C}$ that does not lie in $\ms{G}\cup \ms{G}^*$, we appeal to the operation of 2-sum. We computationally verified that $M(K_5)\oplus_2 M(K_5)^*\in \ms{C}$. Note that $M(K_5)\oplus_2 M(K_5)^*$ is connected and is neither graphic nor cographic.

Since there cannot exist an excluded-minor characterization of $\ms{C}$, we could ask if a Seymour-like classification is possible for matroids with cycle systems.

\begin{question}
Is it possible to classify $\ms{C}$ using 1-, 2-, and 3-sums of certain matroids?
\end{question}

The proofs of \Cref{thm:cycle system binary} and \Cref{thm:cycle system regular} both employ methods of reducing a given matroid with a cycle system to a smaller such structure, in particular taking the simplification. The main technical difficulties in these proofs come from the fact that we currently do not know if the existence of a cycle system is preserved by simplification. Hence we ask the following.

\begin{question}
    If $M$ has a cycle system $\mathcal{C}$, does its simplification $\widetilde{M}$ always admit a cycle system? If so, can it be constructed from $\mathcal{C}$?
\end{question}

\subsection{Uniqueness of circuit representations}
Recall that if $M$ is a matroid with cycle system ${\mathcal C}$, then any circuit $D$ can be recovered as the unique union of some subset of ${\mathcal C}$. In \Cref{sec:uu and symdiff} we saw that this representation is not unique for an arbitrary matroid. However, if the elements of ${\mathcal C}$ consist of circuits (so that the collection ${\mathcal C}$ forms a basis for the circuit space of $M$), we make the following conjecture.

\begin{conjecture}\label{conj:unique}
Suppose $M$ is a matroid with circuit system ${\mathcal C}$, and let $D$ be a circuit of $M$. Then there exists a unique $S\subseteq [g]$ such that $D = {\mathcal C}_S$. 
\end{conjecture}

In the language of \Cref{sec:uu and symdiff}, this conjecture asserts that $|\mathcal{S}_D| = 1$ for every circuit $D$. Inspired by our observations in \Cref{sec:uu and symdiff} we also make the following conjecture.

\begin{conjecture}\label{conj:C>=3 ind}
    Let $M$ be a matroid with circuit system $\mathcal{C}$, where $\mathcal{C}_{[g]}$ is a circuit. Then $\mathcal{C}_{\ge 3}$ is independent.
\end{conjecture}

Recall that \Cref{thm:sufficient unique} demonstrates that \Cref{conj:C>=3 ind} implies \Cref{conj:unique}. We suggest attempting to prove \Cref{conj:unique} by proving \Cref{conj:C>=3 ind} instead.

Our questions of uniqueness of representations in the context of cycle systems can also be generalized as follows.
Because the circuits in a circuit system form a basis of the circuit space, the symmetric difference operator $\triangle: {\mathcal P}([g]) \rightarrow {\mathcal P}(E)$ is injective, where for a subset $S \subseteq [g]$ we define $\triangle (S)$ to be the symmetric difference of the sets $\{C_i\}_{i \in S}$.  As a strengthening of \Cref{conj:unique}, we can ask if a similar property holds for the unique union operator. 

\begin{question}
Suppose $M$ is a connected matroid with circuit system ${\mathcal C}$. Is the unique union operator $\overset{\ast}{\cup}: {\mathcal P}([g]) \rightarrow {\mathcal P}(E)$ injective? 
\end{question}

Finally, since a circuit system for a connected matroid is in particular a basis for its circuit space, we can ask to what extent the converse holds. For this, suppose $M$ is a binary matroid and ${\mathcal B} = \{B_1, B_2, \dots, B_g\}$ is a basis for its circuit space. We say that ${\mathcal B}$ is a \emph{unique union basis} if every circuit $D \in \cir(M)$ has the property that its representation $D = \triangle_{i\in S} B_i$ in terms of basis elements has the property that $D = \ast\{B_i : i\in S\}$.

\begin{question}
    Suppose $M$ is a connected binary matroid, and let ${\mathcal B} = \{B_1, B_2, \dots, B_g\}$ be a unique union basis for its circuit space. Is it true that ${\mathcal B}$ is a cycle system for $M$?
\end{question}

More generally, we are interested in sufficient conditions for a basis of the circuit space of a binary matroid to be a cycle system.

\begin{question}
    Find sufficient conditions for a basis $\mc B= \{B_1, B_2, \dots, B_g\}$ of the circuit space of a binary matroid $M$ to be a cycle system for $M$.
\end{question}

\section*{Acknowledgements}
This work was conducted as part of an REU at Texas State University in the summer of 2025, sponsored by NSF grant \#2447229. We are grateful to the NSF and Texas State for the support and the stimulating work environment. We thank Kolja Knauer and Dave Perkinson for helpful conversations related to cycle systems. Dochtermann was partially supported by Simons Foundation Grant $\#964659$. The authors used ChatGPT for editing assistance and computing examples. All mathematical content and computational results were checked by the authors, who take full responsibility for the paper.


\printbibliography

\end{document}

%% file: matroid9.tex
\begin{tikzpicture}[
  scale=1.15,
  every node/.style={font=\small},
  edge/.style={draw=black!70, line width=0.9pt, line cap=round, line join=round},
  lbl/.style={inner sep=1pt, fill=white, fill opacity=0.85, text opacity=1}
]

\coordinate (L) at (0,0);
\coordinate (R) at (6,0);
\coordinate (T) at (3,5.2);
\coordinate (B) at (3,0);
\coordinate (ML) at (1.5,2.6);
\coordinate (MR) at (4.5,2.6);

\draw[edge] (L) -- (B) node[midway, below, lbl] {1};
\draw[edge] (B) -- (R) node[midway, below, lbl] {4};

\draw[edge] (L) -- (ML) node[midway, left,  lbl] {2};
\draw[edge] (ML) -- (T)  node[midway, left,  lbl] {8};

\draw[edge] (R) -- (MR) node[midway, right, lbl] {6};
\draw[edge] (MR) -- (T) node[midway, right, lbl] {9};

\draw[edge] (ML) -- (MR) node[midway, above, lbl] {7};

\draw[edge] (ML) -- (B) node[midway, left,  lbl] {3};
\draw[edge] (MR) -- (B) node[midway, right, lbl] {5};

\end{tikzpicture}

